\documentclass{amsart}

\usepackage{amsmath, amssymb, bm, color}
\usepackage{amsthm} 
\usepackage[pagewise]{lineno}
\usepackage{mathrsfs} 

\theoremstyle{definition} 
\newtheorem{Th}{\bf Theorem}[section] 
\newtheorem{Le}{\bf Lemma}
\newtheorem{Co}{\bf Corollary}

\newtheorem{De}{\bf Definition}
\newtheorem{Rem}{\bf Remark}

\newcommand{\N}{\mathbb{N} }

\newcommand{\R}{\mathbb{R} }

\newcommand{\LA}{\left \langle }
\newcommand{\RA}{\right  \rangle }
\newcommand{\LB}{\left \lbrack }
\newcommand{\RB}{\right  \rbrack }
\newcommand{\LC}{\left ( }
\newcommand{\RC}{\right ) }
\newcommand{\LD}{\left \{ }
\newcommand{\RD}{\right \} }
\newcommand{\LZ}{\left | }
\newcommand{\RZ}{\right | }
\newcommand{\DS}{\displaystyle }

\DeclareMathOperator{\sgn}{sgn}

\DeclareMathOperator{\supp}{supp}
\DeclareMathOperator{\id}{id}
\DeclareMathOperator{\dist}{dist}
\DeclareMathOperator{\Lip}{Lip}



\begin{document}
\title[Solvability of Doubly Nonlinear Parabolic Equation]{Solvability of Doubly Nonlinear Parabolic Equation with $p$-Laplacian}
\author[S. Uchida]{Shun UCHIDA}
\address[S. Uchida]{Oita University, Department of Integrated Science and Technology, 
Faculty of Science and Technology, 
700 Dannoharu, Oita City, Oita Pref., 
Japan  870-1192.}
\email{shunuchida@oita-u.ac.jp}
\keywords{Doubly nonlinear equation, 
parabolic type,
initial boundary value problem,
$p$-Laplacian,
well-posedness,
entropy solution}
\subjclass[2010]{Primary 35K92; Secondary 35K61, 47J35, 34G25.}
\thanks{Supported by the Fund for the Promotion of Joint International Research (Fostering Joint International Research (B))
{\#}18KK0073, JSPS Japan.}



\begin{abstract}
In this paper, we consider   
a doubly nonlinear parabolic equation $ \partial _t \beta (u) - \nabla \cdot \alpha (x , \nabla u) \ni f$
with the homogeneous Dirichlet boundary condition in a bounded domain,
where
$\beta : \R \to 2 ^{\R }$ is a maximal monotone graph satisfying $0 \in \beta (0)$
and $ \nabla \cdot \alpha (x , \nabla u )$ stands for a generalized $p$-Laplacian.
Existence of solution to the initial boundary value problem of this equation
has been investigated in an enormous number of papers
for the case where single-valuedness, coerciveness, or some growth condition is imposed on $\beta $.
However, there are a few results 
for the case where such assumptions are removed
and
it is difficult to construct an abstract theory 
which covers the case for $1 < p < 2$.
Main purpose of this paper is to show the solvability of 
the initial boundary value problem for any $ p \in (1,  \infty ) $
without any conditions for $\beta $ except $0 \in \beta (0)$.
We also discuss the uniqueness of solution 
by using properties of entropy solution.

\end{abstract}

\maketitle

\section{Introduction}
In this paper, we are concerned with the initial boundary value problem 
of the following doubly nonlinear equation: 
\begin{equation*}
{\rm (P)}~~
\begin{cases}
~ \partial _t \beta (u (x,t)) - \nabla \cdot  \alpha  (x , \nabla u (x, t))  \ni f(x,t) 
		~&(x,t) \in Q := \Omega \times (0,T), \\
~ u (x,t) = 0 
		~~~&(x,t) \in \partial \Omega \times  (0,T), \\
\end{cases}
\end{equation*}
where $\Omega \subset \R ^d $ ($d \geq 1$) be a bounded domain with a sufficiently 
smooth boundary $\partial \Omega $.
Throughout this paper, we impose the followings  on $\alpha $ and $\beta $:
\begin{itemize}

\item[(H.$\alpha  $)] There exist a  $C^1 $-class function $ a: \Omega \times \R ^d \to  \R $
such that 
$a (x , \cdot ) : \R ^d \to  \R $ is convex
and 
$\alpha  (x , z )= D _z a (x , z)$ holds for every $z  \in \R ^d $ and almost every $x \in \Omega $.
Moreover, $a$ and its derivative
$\alpha  : \Omega \times \R ^d  \to \R ^d $ 
satisfy the followings with some exponent $ p\in (1, \infty )$ and constants $c ,C >0 $:  
\begin{align}
 &c |z| ^p -C  \leq a (x ,z ) \leq C (|z| ^ p +1 ), 
\label{A-a01} \\
&\LZ \alpha (x , z)  \RZ  \leq C \LC |z | ^{p-1} +1 \RC , 
\hspace{3mm} \alpha  (x ,0 ) = 0,
\label{A-a02} 
\end{align}
and 
\begin{equation}
\begin{split}
\LC \alpha (x , z_1) -\alpha (x , z_2)  \RC \cdot (z _1 - z_2) \geq c |z _ 1  - z_ 2| ^p 
\hspace{5mm} &\text{if } p \geq 2, \\
\LC \alpha (x , z_1) -\alpha (x , z_2)  \RC \cdot (z _1 - z_2) 
\geq  \frac{c|z _ 1  - z_ 2| ^2}{|z _ 1 | ^{2-p} + |z_ 2| ^{2-p} +C}  
\hspace{5mm} &\text{if } 1 < p < 2,
\end{split}
\label{A-a03}
\end{equation}
for every $z, z _1 , z_2  \in \R ^n $  and almost every $x \in \Omega $.

\item[(H.$\beta $)] $\beta : \R \to 2 ^{\R }$ is  maximal monotone and  
satisfies $ 0\in \beta (0)$.
\end{itemize}
A typical example of  $\mathcal{A} u (x) := - \nabla \cdot  \alpha  (x , \nabla u (x, t))  $ 
is  the so-called $p$-Laplacian
$- \Delta _p   u :=- \nabla \cdot \LC |\nabla u | ^{p-2} \nabla u \RC $,
which satisfies (H.$\alpha $)
with 
$a (x, z ) = \frac{1}{p} |z| ^p $ and $\alpha (x , z ) = |z| ^{p-2} z$.
The sum of a finite number of $ p_ i $-Laplacian ($i= 1,2, \ldots ,n$)
also fulfills (H.$\alpha $) with $p = \max _{i=1, 2 ,\ldots , n} p _i $.

Putting $\beta (s ) = |s | ^{r -2 } s $ with $r >1 $ in (P), we obtain
\begin{equation*}
\partial _t  (|u| ^{r -2 } u ) - \Delta _p u = f,
\end{equation*}
which has a  huge amount of previous works, e.g.,
\cite{Bam} 
\cite{Ber} 
\cite{JLL} 
\cite{R} 
\cite{T}. 
We also refer to the following equation as an example of (P):
\begin{equation*}
\partial _t v - \Delta \log v = f,
\end{equation*}
which is considered in, for instance,  
\cite{BH} 
\cite{ERV} 
for $d =1 $,
\cite{RVE-01} 
\cite{Vaz-Log} %
\cite{VER-01}
\cite{Wu} 
for $ d= 2 $,
and
\cite{D-dP}
\cite{Qi}
for higher dimensions.
This equation with the boundary condition $\left . v  \right | _{\partial \Omega } \equiv 1$
can be reduce to (P) by $p=2$, $u := \log v $, and $\beta (u) := e ^u -1 $.
In Miyoshi--Tsustumi \cite{MT}, they obtain the following logarithmic diffusion equation 
by passing to the singular limit of a generalized Carleman model:
\begin{equation*}
\partial _t v - \nabla \cdot (|\nabla \log v | ^{p-2} \nabla \log v) = f.
\end{equation*}
Compared with 
above two,
there are  very few investigations for this equation, e.g., in 
\cite{Log-p-Lap}.
In order for (P) to cover such equations possessing strong nonlinearity, 
we need to mitigate the growth condition or coerciveness of $\beta $.
Moreover, if one deal with Stefan problem \cite{Stefan},
Hele-Shaw problem \cite{HeleShaw},
constraint problem ($\beta $ is the subdifferential of an indicator function), or combination of them,
one might face the case where $\beta $ is multi-valued and $D(\beta ) \neq \R $.

The solvability for $p=2$, i.e., $\mathcal{A} = -\Delta $,  
can be derived from the abstract theory of evolution equations
given by B\'{e}nilan \cite{Be} 
and
Br\'{e}zis \cite{Zala}. 
Carrillo \cite{C} 
and 
Kobayashi \cite{KKoba} 
discussed the existence and the uniqueness of solution to (P) with a hyperbolic term. 
As for the case of $p\neq 2$,
Alt--Luckhaus \cite{AL} considered the system of quasilinear doubly nonlinear equations
and assured the solvability of (P) where $\beta $ is single-valued or multi-valued with a growth condition for jump.
We can find the solvability results of (P) in, e.g,
\cite{A-W03} 
\cite{BW} 
\cite{Bl-Fr} 
\cite{C-W99} 
\cite{D-P96} 
for the case where $\beta $ is single-valued
and in
\cite{Ammar} 
\cite{AMTI} 
\cite{B-P} 
\cite{D-E-T} 
\cite{I-U03} 
for multi-valued.
In these study, they imposed some restriction on $\beta $ besides $ 0 \in \beta (0)$
instead of adopting generalized quasilinear terms $\alpha = \alpha  (u , \nabla u)$ or $\alpha  (x , u , \nabla u) $.
In Akagi--Stefanelli \cite{A-S}, they proposed a different approach from those in the above, 
which are based on the time-discretization technique and $L^1$-contraction principle.
They reduce (P) to $0 \in ( \mathcal{A}  ) ^{-1} ( f - \partial _t \xi ) - \beta ^{-1} (\xi ) $,
where $ \xi \in \beta (u)$,
and use the so-called Weighted Energy Dissipation (WED) method.

We next comment on the results for the abstract evolution equation in Banach spaces:
\begin{equation*}
\partial _t Bu  + A u \ni f,
\end{equation*}
where $A$ and $B$ are maximal monotone operators.
Under some  boundedness or coerciveness condition of operators, 
the solvability has been obtained by, e.g., 
\cite{A-H} 
\cite{B-F} 
\cite{D-S} 
\cite{GM} 
\cite{Hokk} 
\cite{K-P} 
\cite{M-M} 
\cite{S-W} 
\cite{Stef} 
\cite{Yama}. 
%
%
On the other hand,
Barbu \cite{Bar} 
removed such 
assumptions 
by using the angular condition between $A$ and the Yosida approximation of $B$
and proved the solvability of (P) with $p \geq 2 $ for any maximal monotone graph satisfying $0 \in \beta (0)$.
Here the condition $p \geq 2 $ seems to be  essential  in this result
since it is hard to obtain the explicit formula of 
the Yosida approximation of $\widetilde {\beta}$, the realization of $ \beta $ in $L^ r (\Omega )$ for $r \neq 2 $.

Main purpose of this paper is to show the existence of solution to (P)
without any restriction to the exponent $p \in (1, \infty )$
and the nonlinearity $\beta $ except $0 \in \beta (0)$.
From viewpoint of  application to specific physical models,
such generalization might seem to be excessive.
However, 
it is still important to complete the solvability result for arbitrary  $\beta $ 
as an auxiliary problem for the classification of $\beta $
by occurrence of extinction phenomena
(see, e.g., \cite{Diaz01}\cite{D-D}).
Another aim of this article is to establish better estimates and regularities of solution,
which may be a useful tool for investigating time-global behavior and perturbation theory.
Although the smoothing property is a fundamental result for standard quasilinear parabolic equations, 
it is not obvious that 
the smoothness of solution is inherited from the given data 
in the doubly nonlinear equation with general nonlinearity
since the monotonicity between the derivatives of $\beta (u) $ and $u $ no longer holds.
Instead of relaxing the requirement of $\beta $, 
we impose some stricter condition on the given initial data, 
external force $f$, and $\alpha $ than those in the previous works given above.
In the next section, 
we fix several notations and state our main results more precisely.
Section 3 and 4 will be expended on the demonstration. 
We first deal with an elliptic equation affiliated with (P) in Section 3
and 
employ the standard time-discretization technique given by  Raviart \cite{R} and Grange--Mignot \cite{GM} in Section 4.
In the final section,
we discuss the uniqueness of solution by following the argument by Carrillo \cite{C},
where the properties of entropy solution (see Kru\v{z}kov \cite{Kru01} \cite{Kru02}) are neatly used. 
Since our arguments  rely on 
those given by \cite{R}, \cite{GM}, and \cite{C},
one may find some duplications in this paper.
However, we attempt to simplify the calculations and give easier proofs in several parts.


\section{Main Theorem}

\subsection{Definition and Notation}

We here collect some notations and basic properties which will be used later
(see e.g., \cite{Attouch} \cite{Bar-1} \cite{Br-1} \cite{Rock}).
 
 %
 %

  
Let $X $ be a Banach space with norm $\| \cdot \| _{X}$ and $X^{\ast} $ be its dual space
with norm $\| \cdot \| _{X^{\ast}}$.
Duality pairing between $X$ and $X^{\ast }$ is denoted by $\LA \cdot , \cdot \RA _{X}$.
Moreover,  $D(A) $ and $R (A) $ stand for the domain and the range of an operator $A$, respectively.
When $A$ is multi-valued, we identify $A$ with its graph $G(A)$ 
and write $[u,v ] \in A $ to describe $ u\in D(A)$ and $v \in A u $.
   An operator $A : X \to 2 ^{X^{\ast}}$  (the power set of $X^{\ast}$)  
  is said to be monotone
  if $\LA u^{\ast} _1 - u^{\ast} _2 , u _1 - u_2 \RC _X \geq 0 $ holds for every 
  $\LB u _i , u^{\ast} _i \RB \in A$ ($i=1,2$)
  and
  a monotone operator $A $ is said to be maximal monotone
  if there is no monotonic extension of $A$. 
  When $X = X^{\ast} $ is a Hilbert space, 
  maximality is  equivalent to $R(\id + \lambda A ) = X $ for any $\lambda >0 $,
  where $\id : X \to X $ stands for the identity mapping.

  A typical example of maximal monotone operator is the subdifferential.
  Let $\phi : X \to ( -\infty , + \infty ]$ be a proper ($\phi \not \equiv + \infty $) 
lower semi-continuous convex functional. We define
the subdifferential operator $\partial _X \phi : X \to 2 ^{X^{\ast}}$ associated with $\phi $ by
  \begin{equation*}
  \partial _X \phi (u ) := \{ u^{\ast} \in X^{\ast} ; \LA u^{\ast} , v- u  \RA _X   
\leq \phi (v) -\phi (u)\hspace{3mm} 
\forall v \in D(\phi )  \} ,
  \end{equation*}
  where $ D(\phi ) := \{ v \in X ;~\phi (v) < +\infty \}$
  is called the effective domain of $\phi $.  
  Obviously $0 \in \partial \phi (u)$ implies that $\phi $ attains its minimum at $u$.
Let $V$ be another Banach space which is densely embedded in $X$ and satisfies $D(\phi ) \subset V$.
Then the restriction of $\phi $ onto $V$
can be regarded as a proper lower semi-continuous convex functional on $V$
and its subdifferential is denoted by  
  \begin{equation*}
  \partial _V \phi (u ) := \{ u^{\ast} \in V^{\ast} ; \LA u^{\ast} , v- u  \RA _V   
\leq \phi (v) -\phi (u)~~~
\forall v \in D(\phi )  \} .
  \end{equation*}
In general, we have $\partial _X \phi \subset \partial _V \phi $,
namely, it holds that
$D(\partial _X \phi ) \subset D(\partial _V \phi)$ and 
$  \partial _X \phi (u) \subset   \partial _V \phi (u)$
for any $u \in D(\partial _X \phi )$.
  We also define
\begin{equation*}
\phi _{\lambda } (u) := \inf _{v \in X } \LD  \frac{\| u-v  \| ^2_X }{2 \lambda } + \phi (v) \RD     
\hspace{5mm} \lambda > 0,
\end{equation*}
which is called the Moreau--Yosida regularization of $\phi $.
If $X$ and $X^{\ast}$ are strictly convex,
$\partial _X \phi _{\lambda }$ coincides with the Yosida approximation of $\partial _X \phi $
with parameter $\lambda >0 $.
Furthermore,  the Legendre--Fenchel transformation (conjugate) of $\phi $
  is defined by 
\begin{equation*}
\phi ^{\ast} (u ^{\ast} ) := \sup _{v \in X } \LD  \LA u ^{\ast} , v \RA _X - \phi (v) \RD     .
\end{equation*} 
 Remark that  $\phi ^{\ast } : X^{\ast} \to (-\infty , + \infty ]$ is proper lower semi-continuous convex
  and $\partial _{X^{\ast}} \phi ^{\ast} = (\partial  _{X} \phi ) ^{-1}$.
  Moreover,
  $ \xi \in  \partial _X \phi (u)$ holds if and only if $ \phi (u) + \phi ^{\ast} (\xi ) = \LA  \xi , u \RA _X$.

For example,
let $X = L ^ r (\Omega )$ with $r \in (1,\infty)$
and define a functional $\varphi $ by
\begin{equation}
\varphi   (u) := 
\begin{cases}
~~\DS  \int _{\Omega }  a (x , \nabla u (x)) dx ~&~ \text{if } u \in L^r (\Omega ) \cap W^{1, p} _0 (\Omega ), \\
~~ +\infty ~&~\text{if }u \in L^r (\Omega ) \setminus W^{1, p} _0 (\Omega ).
\end{cases}
\label{p-Lap} 
\end{equation}
From the assumption (H.$\alpha $), $\varphi $
is a lower semi-continuous and convex functional on $L^{r} (\Omega )$.
Since $\mathscr{D} (\Omega )$ (the set of infinitely differentiable functions with compact support in $\Omega $) 
is dense in $ L^r (\Omega ) \cap W^{1, p} _0 (\Omega )$, 
$\varphi $ is G\^{a}teaux differentiable and $\partial _ {L ^ r  }\varphi 
(u )= \mathcal{A} u $
with domain $
D(\partial _ {L ^ r  }\varphi ) = \{ u\in   L^r (\Omega ) \cap W^{1, p} _0 (\Omega ); \mathcal{A} u \in L^{r'} (\Omega ) \}$,
where
$r ' := r / (r -1 )$ is  the H\"{o}lder conjugate exponent of $r$.
Moreover, by
letting $r = p $ in \eqref{p-Lap}
and restricting $\varphi $ onto $V = W ^{1,p } _0 (\Omega )$,
we can  see that
$\partial _ {W^{1,p} _0  }\varphi 
(u )= \mathcal{A} u $
with domain $
D(\partial _ {W^{1,p} _0   }\varphi ) 
= \{ u\in   W^{1, p} _0 (\Omega ); \mathcal{A} u \in W^{-1, p'}  (\Omega ) \} = W^{1, p} _0 (\Omega )$ by \eqref{A-a02},
where $W^{-1, p'}  (\Omega )$ is the dual of $ W ^{1,p } _0 (\Omega )$.

  If $X = X^{\ast} = \R$, the maximal monotone operator always possesses a primitive function.
  That is to say, if $\beta : \R \to 2 ^{\R}$ is a maximal monotone graph,
  there exist a proper lower semi-continuous convex function $j : \R \to  ( -\infty , + \infty ] $
  such that $\beta = \partial  j $.
  If $0 \in \beta (0)$ holds, we can assume $j (0) = 0 $ and $j \geq 0 $ without loss of generality. 
 Let $X = L^ r (\Omega )$ with $r \in (1, \infty )$, then 
  \begin{equation}
  \psi (u)
  := 
  \begin{cases}
	~~\DS \int _{\Omega } j (u (x)) dx ~~&~\text{ if } u \in L^r (\Omega ) ,~~j (u ) \in L^1 (\Omega ) ,\\
	~~ + \infty ~~&~\text{ otherwise, }
  \end{cases}
  \label{beta} 
  \end{equation}
 is proper lower semi-continuous convex on $L^ r (\Omega )$
 and $\xi \in \partial _{L^ r } \psi  (u)$
  if and only if 
  $\xi \in L^{r'} (\Omega ) $ and $\xi (x) \in \beta (u(x))$ for a.e. $x \in \Omega $.
  In this sense, we write 
 $\widetilde{\beta } :=   \partial _{L^ r } \psi  $ and call it the realization of $\beta $ in $L^r (\Omega )$.
 However, if one consider the subdifferential of the canonical restriction $\psi $ onto $V= W^{1,p}_0 (\Omega )$,
 then $\xi \in \partial _{W^{1,p}_0 } \psi (u)$ dose not
	necessarily imply $ \xi (x) \in \beta (u (x)) $
 except the case of $D(\beta ) = \R $ (see Br\'{e}zis \cite{Br-Conv}).

Define the resolvent of $\beta $ by 
$J_{\lambda }  := (\id + \lambda \beta ) ^{-1 }$
and
the Yosida approximation by
 $\beta _{\lambda } := (\id - J_{\lambda }) /\lambda $.
 Since $\beta _{\lambda }$ is a  Lipschitz continuous function on $\R$,
the realization of $\beta _{\lambda } $ in $L^r (\Omega )$ is Lipschitz continuous if $r \geq 2$.
Note that this does not holds if $r <2 $ in general.
 We also remark that
 the Moreau--Yosida regularization of $\psi $ given in \eqref{beta} 
 dose not coincide with $u \mapsto \int_{\Omega } j_{\lambda } (u (x)) dx $
 except the case of $r=2$,
 where $j _{\lambda } (s) := \inf _{\sigma \in \R} \LD \frac{|s - \sigma |^2}{2 \lambda } + j (\sigma ) \RD$.
 This means the mismatch between the Yosida approximation of $\partial _{L^r} \psi $
 and $\widetilde{\beta _{\lambda }} $, the realization of $\beta _{\lambda }$ in $L^ r (\Omega )$.

 For later use, 
 we here fix some notations for the sign function and the Heaviside function:
 \begin{equation*}
\sgn (s) := \begin{cases}
~~1 ~~&~~\text{ if } s> 0 ,\\
~~[-1 , 1 ] ~~&~~\text{ if } s= 0 ,\\
~~-1  ~~&~~\text{ if } s < 0 ,
\end{cases}
\hspace{5mm} 
H(s) := \begin{cases}
~~1 ~~&~~\text{ if } s> 0 ,\\
~~[0 , 1 ] ~~&~~\text{ if } s= 0 ,\\
~~0  ~~&~~\text{ if } s < 0 .
\end{cases}
\end{equation*}
The Yosida approximations of these maximal monotone graphs
with parameter $\lambda > 0$ coincide with
\begin{equation*}
\sgn _{\lambda } (s) = \begin{cases}
~~1 ~~&~~\text{ if } s> \lambda  ,\\
~~s / \lambda  ~~&~~\text{ if } |s|\leq  \lambda  ,\\
~~-1  ~~&~~\text{ if } s < - \lambda  ,
\end{cases}
\hspace{5mm} 
H_{\lambda } (s) = \begin{cases}
~~1 ~~&~~\text{ if } s> \lambda   ,\\
~~s / \lambda   ~~&~~\text{ if } s\in [ 0 , \lambda ]  ,\\
~~0  ~~&~~\text{ if } s < 0  .
\end{cases}
\end{equation*}
It is well known that
$\beta _{\lambda } (s) $ converges to the minimal section  $\beta ^{\circ } (s)$
for each $s \in D(\beta )$.
The minimal sections of $\sgn $ and $H$ are
\begin{equation*}
\sgn ^{\circ} (s) = \begin{cases}
~~1 ~~&~~\text{ if } s> 0  ,\\
~~0  ~~&~~\text{ if } s= 0  ,\\
~~-1  ~~&~~\text{ if } s < 0  ,
\end{cases}
\hspace{5mm} 
H ^{\circ} (s) = \begin{cases}
~~1 ~~&~~\text{ if } s >   0  ,\\
~~0  ~~&~~\text{ if } s\leq 0  .
\end{cases}
\end{equation*}

\subsection{Main Theorem} 
In this paper,
we  construct a weak solution in the following sense:  
\begin{De}
\label{MDe-01} 
Let $(u_0 , \xi _0) $ satisfy $\xi _0 (x) \in \beta (u_0  (x))$ for a.e. $x \in \Omega $.
 Then $(u , \xi )$ is said  to be a solution to (P)
with initial data $(u_0 , \xi _0) $
if 
it satisfies 
\begin{equation}
\begin{split}
&u \in L^{\infty } (0,T ; W^{1,p } _0 (\Omega )), \\
&\xi \in W^{1 ,\infty } (0,T ; W^{-1,p '}  (\Omega )) \cap L^{\infty } (0,T ; L^{p ' }  (\Omega )), \\
&\xi (x,t ) \in \beta (u (x,t ))~~~\text{ for a.e. } (x,t ) \in Q , \\
&\partial _t \xi (t) + \mathcal{A} u (t) = f(t)  ~~
\text{ in }W^{-1 , p'} (\Omega ) ~~\text{ for a.e. } t  \in (0,T) , \\
& \xi (0) =\xi _0 .
\end{split}
\label{Regu} 
\end{equation}
\end{De}

Then our main results can be stated as follows:
\begin{Th}
\label{MTh-01} 
Assume (H.$\alpha $) and (H.$\beta $).  
Let 
\begin{equation}
f \in W^{1, p' } (0,T ; W^{-1, p'} (\Omega )) \cap L^{\infty} (0,T; L^{p'} (\Omega ) \cap L^{q} (\Omega ))
\label{external} 
\end{equation}
with some $q \in [ 1,  \infty ]$.
Then for any $u _{0} \in W^{1,p} _0 (\Omega)$ and $\xi _{0} \in L^{p'} (\Omega ) \cap L^{q} (\Omega )$
such that $\xi _0(x) \in \beta (u_0 (x))$ for a.e. $x\in \Omega $,
(P) possesses at least one solution
with initial data $(u _ 0 , \xi _0 )$ satisfying
\begin{equation}
\begin{split}
&\sup _{0\leq t \leq T } \|  \xi (t) \| _{L^{p'}} 
		\leq  T \sup _{0\leq t \leq T }   \| f( t)  \| _{L^{p' } }+ \| \xi _0 \| _{L^{ p'}} ,\\
&\sup _{0\leq t \leq T } \| \xi (t) \| _{L^{q}} 
		\leq  T \sup _{0\leq t \leq T }    \| f( t)  \| _{L^{q } }  + \| \xi _0 \| _{L^{ q}} .
\end{split}
\label{MT-02} 
\end{equation}
\end{Th}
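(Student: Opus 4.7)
The plan is to implement the time-discretization scheme announced in the introduction (after Raviart \cite{R} and Grange--Mignot \cite{GM}) in tandem with the elliptic solvability result to be proved in Section 3. Given $N \in \N $ and $h = T/N $, set $f _n := h^{-1} \int _{(n-1) h}^{nh} f (s)\, ds $ and solve inductively the discrete inclusions
\begin{equation*}
\xi _n - \xi _{n-1} + h \mathcal{A} u _n \ni h f _n , \hspace{3mm}
\xi _n (x) \in \beta (u _n (x)) \text{ a.e. in } \Omega ,
\end{equation*}
starting from $(u_0 , \xi _0 )$. Each step is a stationary doubly nonlinear inclusion of the form $\widetilde{\beta } (v) + h \mathcal{A} v \ni g $, which is precisely the elliptic problem affiliated with (P) to be treated in Section 3 via maximal monotonicity and surjectivity arguments.

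First I would derive the uniform (in $h$) bounds of \eqref{MT-02}. Because $\xi _n \in \beta (u _n)$ pointwise and $s \mapsto |s|^{r-2} s $ is monotone non-decreasing, the composition $|\xi _n |^{r-2} \xi _n $ is a monotone function of $u _n$, so the chain rule formally gives $\alpha (x , \nabla u _n ) \cdot \nabla (|\xi _n |^{r-2} \xi _n ) \geq 0 $. Testing the discrete scheme against this function for $r = p'$ and $r = q $, and using the convexity inequality together with H\"{o}lder's inequality on the interaction term $(\xi _{n-1} , |\xi _n |^{r-2} \xi _n )$, produces
\begin{equation*}
\| \xi _n \| _{L^{r}} - \| \xi _{n-1} \| _{L^{r}} \leq h \| f _n \| _{L^{r}} ,
\end{equation*}
which telescopes into \eqref{MT-02}. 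To make the chain rule rigorous (the map $s \mapsto |\beta (s)|^{r-2} \beta (s) $ need not act continuously on $W^{1,p}_0$ when $\beta $ is multi-valued), I would first replace $\beta $ by its Lipschitz Yosida approximation $\beta _{\mu }$, carry out the argument in that setting, and pass $\mu \downarrow 0$.

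For the bound of $u$ in $L^{\infty } (W ^{1, p} _0 )$ and of $\partial _t \xi $ in $W^{-1, p'} $ I would test the scheme against the discrete time derivative $(u_n - u _{n-1})/h $. The subgradient inequality for $\xi _n \in \partial j (u_n)$ gives $(\xi _n - \xi _{n-1}) (u _n - u _{n-1}) \geq 0 $, while convexity of the primitive $\varphi $ of $\mathcal{A} $ yields $\varphi (u _n) - \varphi (u _{n-1}) \leq \LA f_n , u _n - u _{n-1} \RA $; summing in $n$, performing a discrete integration by parts against $f \in W^{1, p'} (0,T ; W^{-1, p'}(\Omega ))$, and invoking the coercivity \eqref{A-a01} deliver the $W^{1,p}_0$-bound on $u_n$. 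Inserting this back into the discrete equation and using the growth condition \eqref{A-a02} controls $(\xi _n - \xi _{n-1})/h $ in $W^{-1 ,p'}(\Omega )$, yielding all the regularity required by \eqref{Regu}.

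Finally I would form the piecewise affine and piecewise constant interpolants $(u_h , \xi _h )$ and pass to the limit $h \downarrow 0$. Weak-$\ast $ compactness extracts $(u, \xi )$ in the function spaces of Definition~\ref{MDe-01} and recovers the equation $\partial _t \xi + \mathcal{A} u = f $ in $W^{-1 , p'} (\Omega )$ modulo identification of the two nonlinear terms. The main obstacle is precisely this identification: since $\beta $ carries neither growth nor coercivity and since $p$ may lie in $(1,2)$, the standard Aubin--Lions argument for $u_h $ is unavailable, and the classical Minty--Browder identification of $\mathcal{A} u _h $ cannot be applied before $\nabla u_h $ converges in a suitable sense. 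My proposal is to combine the $W^{-1,p'}$-bound on $\partial _t \xi _h $ with the inequality $(\xi _h (t) - \xi _h (s))(u_h (t) - u_h (s)) \geq 0$ to obtain strong $L^1 (Q)$ convergence of a selection of $u_h $, in the spirit of Alt--Luckhaus \cite{AL}; the strict monotonicity \eqref{A-a03} then upgrades $\nabla u _h $ to almost-everywhere convergence via Minty--Browder, and maximal monotonicity of $\widetilde{\beta }$ closes $\xi (x, t) \in \beta (u (x,t))$ a.e. This compactness/identification step, where the degeneracy of $\beta $ and the singular regime $1 < p < 2 $ have to be resolved simultaneously, is where I expect the genuine difficulty of the argument to reside.
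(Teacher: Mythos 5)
Your overall scheme---implicit time discretization resolved by the stationary problem of Section 3, a per-step $L^{p'}$/$L^{q}$ contraction that telescopes into \eqref{MT-02}, the test with the discrete time derivative $(u_n-u_{n-1})/h$ to bound $u$ in $L^{\infty}(0,T;W^{1,p}_0(\Omega))$ and $\partial_t\xi$ in $W^{-1,p'}(\Omega)$, followed by interpolants and a limit passage---is exactly the route the paper takes (after Raviart and Grange--Mignot). Those estimates are sound modulo routine technicalities: the paper tests with the truncations $k^{r}_m$, $K^{r}_M$ of the power function rather than the raw $|\xi|^{r-2}\xi$ (which is not Lipschitz near $0$ for $r<2$ nor at infinity for $r>2$), and your summation by parts in time also needs the preliminary bound $\tau\sum_{n}\|\nabla u^{n}_{\tau}\|^{p}_{L^{p}}\leq C$, obtained by first testing with $u^{n+1}_{\tau}$, to absorb the term involving $\sum_m\|\nabla u^m_\tau\|_{L^p}$. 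The genuine gap lies in your final identification step.

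You propose to extract strong $L^{1}(Q)$ convergence of $u_h$ ``in the spirit of Alt--Luckhaus'' from the $W^{-1,p'}$ bound on $\partial_t\xi_h$ and the monotonicity inequality, and then to upgrade $\nabla u_h$ to a.e.\ convergence via \eqref{A-a03}. This cannot work at the stated generality: Alt--Luckhaus-type time-translate estimates yield compactness of $\xi_h\in\beta(u_h)$, never of $u_h$ itself. When $\beta$ has plateaus (Stefan, Hele--Shaw, constraint problems), or in the extreme case $\beta\equiv 0$ on a whole interval, convergence of $\xi_h$ carries no information about the time oscillations of $u_h$, and there is no growth or coercivity of $\beta$ to transfer compactness---removing precisely those hypotheses is the point of the theorem; note also that $u_h$ has no time regularity at all, only $\xi_h$ does. (Strong convergence of $\Pi_{\tau}u_{\tau}$ in $L^{p}(0,T;W^{1,p}_0(\Omega))$ is in fact true, but in the paper it is derived \emph{a posteriori} from the already identified limit equation, so it cannot drive the identification.) The identification needs no strong convergence of $u_h$ or $\nabla u_h$: the paper pairs the strong convergence of $\xi_{\tau}$ in $C([0,T];W^{-1,p'}(\Omega))$---Ascoli, via the compact embedding $L^{p'}(\Omega)\hookrightarrow W^{-1,p'}(\Omega)$ and the uniform bound on $\partial_t\Lambda_{\tau}\xi_{\tau}$---with the weak convergence of $u_{\tau}$ in $L^{p}(0,T;W^{1,p}_0(\Omega))$, so that $\int_Q \xi_{\tau}u_{\tau}\,dx\,dt\to\int_Q \xi u\,dx\,dt$, and invokes Lemma 1.2 of Br\'{e}zis--Crandall--Pazy \cite{BCP} to conclude $\xi\in\widetilde{\beta}(u)$; the identification $\eta=\mathcal{A}u$ then follows from a limsup energy argument based on the conjugate function $j^{\ast}$ and the chain-rule identity \eqref{L-AL}, closed by the same maximal-monotonicity lemma. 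Replacing your compactness step by this duality-pairing/Minty argument is what makes the proof go through for arbitrary $\beta$ and all $p\in(1,\infty)$.
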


\begin{Th}
\label{MTh-02} 
In addition to the assumptions of Theorem \ref{MTh-01},
let $\mathcal{A} u _ 0 \in L^{p'} (\Omega )$.
Then there exist at least one  solution to 
(P) with initial data $(u _ 0 , \xi _0 )$ satisfying \eqref{MT-02} and 
\begin{equation}
\begin{split}
\|  \xi (t) - \xi (s) \| _{L^1} \leq C |t - s | 
\hspace{5mm} \forall t, s \in [0,T],
\end{split}
\label{MT-03} 
\end{equation}
where $C >0 $ is a constant.
\end{Th}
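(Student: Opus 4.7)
The plan is to reprise the time-discretization scheme underlying the proof of Theorem \ref{MTh-01} and extract from it a uniform-in-$h$ Lipschitz bound on the discrete time differences $\xi _n - \xi _{n-1}$. With $h = T/N $ and $f_n$ a suitable time-local mean of $f$ on $((n-1)h, nh)$, one solves successively
$$ \xi _n + h\mathcal{A} u _n \ni \xi _{n-1} + h f _n ,\qquad \xi _n \in \beta (u _n), \qquad n = 1, \ldots , N , $$
via the elliptic theory of Section 3. The target is the discrete estimate $\|\xi _n - \xi _{n-1}\|_{L^1 } \leq C h $ with $C$ independent of $n$ and $h$, which after passage to the limit yields \eqref{MT-03}.

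For the initial step, rewrite the $n=1$ scheme as
$$ \frac{\xi _1 - \xi _0}{h} + \mathcal{A} u _1 - \mathcal{A} u _0 = f _1 - \mathcal{A} u _0 ,$$
whose right-hand side lies in $L^{p'}(\Omega )$ thanks to the extra hypothesis $\mathcal{A} u _0 \in L^{p'}$. Pairing with a Yosida approximation $\sgn _{\lambda }(\xi _1 - \xi _0)$ and sending $\lambda \to 0$, the elliptic contribution $\int _{\Omega }(\mathcal{A} u _1 - \mathcal{A} u _0)\sgn _{\lambda }(\xi _1 - \xi _0) \, dx $ is non-negative by a Kato--Stampacchia type inequality: the monotonicity of $\beta $ forces the sign of $\xi _1 - \xi _0$ to coincide with that of $u _1 - u _0$ wherever either is non-zero, so the test function can effectively be re-expressed as a composition with $u _1 - u _0 \in W^{1,p}_0 (\Omega )$. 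This produces $\|\xi _1 - \xi _0\|_{L^1 } \leq h \|f _1 - \mathcal{A} u _0\|_{L^1 } \leq Ch $. For $n \geq 2 $, subtracting the $(n-1)$-th equation from the $n$-th and testing with $\sgn _{\lambda }(\xi _n - \xi _{n-1})$ produces, by the same mechanism, the discrete $L^1$-contraction
$$ \|\xi _n - \xi _{n-1}\|_{L^1 } \leq \|\xi _{n-1} - \xi _{n-2}\|_{L^1 } + h\|f _n - f _{n-1}\|_{L^1 }, $$
which iterates to $\|\xi _n - \xi _{n-1}\|_{L^1 } \leq \|\xi _1 - \xi _0\|_{L^1 } + h\sum _{k =2 }^{n }\|f _k - f _{k-1}\|_{L^1 } \leq Ch$, the summability of the discrete increments being secured by the time-regularity of $f$ built into \eqref{external}.

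With the discrete Lipschitz bound established, the piecewise-constant interpolant $\xi _h $ is uniformly bounded in $W^{1, \infty }(0,T; L^1 (\Omega ))$, and the Lipschitz inequality \eqref{MT-03} is inherited by the weak-$\ast $ limit $\xi $ already constructed in the proof of Theorem \ref{MTh-01}. The principal obstacle will be the rigorous legitimacy of the sign-function testing, since $\sgn _{\lambda }(\xi _n - \xi _{n-1})$ need not a priori belong to $W^{1, p} _ 0 (\Omega )$ (as $\xi _n - \xi _{n-1}$ sits only in $L^{p'}$, without any Sobolev trace). I would circumvent this by running the entire discrete argument first with $\beta $ replaced by its Yosida approximation $\beta _{\mu }$, for which $\xi _n ^{(\mu )} = \beta _{\mu } (u _n ^{(\mu )}) \in W^{1,p} _ 0 (\Omega )$ inherits Sobolev regularity from $u _n ^{(\mu )}$, then passing $\mu \to 0$ by $L^1 $-lower semicontinuity of the difference quotients. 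A minor bookkeeping difficulty will be handling the set $\{u _n = u _{n-1}\}$ on which a multi-valued $\beta $ still allows $\xi _n \neq \xi _{n-1}$; there one uses that $\nabla u _n = \nabla u _{n-1}$ a.e.\ to absorb the extra contribution into the right-hand side of the discrete equation.
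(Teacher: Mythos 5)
Your overall architecture is the same as the paper's: discretize in time, control the first increment $\|\xi_1-\xi_0\|_{L^1}\le \tau\|f_1-\mathcal{A}u_0\|_{L^1}$ using the extra hypothesis $\mathcal{A}u_0\in L^{p'}(\Omega)$, propagate it by a discrete $L^1$-contraction, and pass to the limit (the paper encodes the initial step by setting $\xi^{-1}_\tau:=\xi_0+\tau\mathcal{A}u_0-\tau f(\cdot,0)$ so that $(u_0,\xi_0)$ itself solves the elliptic problem, and then applies its elliptic contraction Theorem \ref{Th3-2} inductively, leading to \eqref{Esti-06} and \eqref{P-03}). The genuine gap is in the one inequality your induction rests on: you test the difference of two discrete equations with $\sgn_\lambda(\xi_n-\xi_{n-1})$ and assert that the elliptic contribution is nonnegative because the sign of $\xi_n-\xi_{n-1}$ agrees with that of $u_n-u_{n-1}$, so that ``the test function can effectively be re-expressed as a composition with $u_n-u_{n-1}$.'' Pointwise sign agreement does not make $\sgn_\lambda(\xi_n-\xi_{n-1})$ a function of $u_n-u_{n-1}$ (its value depends on the size of the $\xi$-difference, hence on $u_n$ and $u_{n-1}$ separately, not on their difference), and the nonnegativity actually fails at the level where it must be checked. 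Indeed, even after your own regularization ($\beta$ replaced by $\beta_\mu$, so that $\xi^{(\mu)}_n=\beta_\mu(u^{(\mu)}_n)\in W^{1,p}_0(\Omega)$ and the test function is admissible), integration by parts turns the elliptic term into
\begin{equation*}
\int_\Omega \sgn_\lambda'\bigl(\beta_\mu(u_n)-\beta_\mu(u_{n-1})\bigr)\,
\bigl(\alpha(x,\nabla u_n)-\alpha(x,\nabla u_{n-1})\bigr)\cdot
\bigl(\beta_\mu'(u_n)\nabla u_n-\beta_\mu'(u_{n-1})\nabla u_{n-1}\bigr)\,dx ,
\end{equation*}
and the unequal weights $\beta_\mu'(u_n)$, $\beta_\mu'(u_{n-1})$ destroy the pairing with $\nabla u_n-\nabla u_{n-1}$, so \eqref{A-a03} gives no sign (the integrand can be negative wherever $\beta_\mu'$ is small at one argument and not at the other). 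Thus the admissibility issue you single out as the principal obstacle is not the real obstacle; the sign is.

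The repair is the paper's choice of test function: test the difference of the Yosida-regularized elliptic equations \eqref{Ellip-AP} with $\sgn_\lambda(u_{\lambda 1}-u_{\lambda 2})$, which is admissible since $\sgn_\lambda$ is Lipschitz with $\sgn_\lambda(0)=0$ and $u_{\lambda 1}-u_{\lambda 2}\in W^{1,p}_0(\Omega)$, and for which \eqref{A-a03} makes the elliptic term nonnegative outright. Letting $\lambda\to0$ bounds $\int_{\{u_{\lambda 1}\neq u_{\lambda 2}\}}|\beta_\lambda(u_{\lambda 1})-\beta_\lambda(u_{\lambda 2})|$, and because $\beta_\lambda$ is single-valued this is the full $L^1$-norm; the bound then passes to $\|\xi_1-\xi_2\|_{L^1}$ by uniqueness of the elliptic solution and Dunford--Pettis (weak $L^1$ lower semicontinuity). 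That is precisely Theorem \ref{Th3-2}, and it also disposes of your ``bookkeeping'' set $\{u_n=u_{n-1}\}$: your proposed absorption there, based on $\nabla u_n=\nabla u_{n-1}$ a.e.\ on that set, is not valid either, since $\mathcal{A}u_n-\mathcal{A}u_{n-1}$ is a divergence and cannot be evaluated pointwise on a level set; in the paper's scheme the issue never arises because single-valuedness of $\beta_\lambda$ settles it before the limit. With the contraction established this way, the remainder of your plan (iteration using the increments of $f$, interpolation, and lower semicontinuity in the limit $\tau\to0$) coincides with the paper's derivation of \eqref{MT-03}.
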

\noindent Existence of solution satisfying \eqref{MT-03}
was obtained in 
Alt--Luckhaus \cite{AL} for Lipschitz continuous $\beta $ and 
Bamberger \cite{Bam} for $\beta (s ) = |s| ^{r -2} r $ with $r >1$
(see also Remark \ref{C-L-theory} below).
We shall show that the time-Lipschitz continuity of solution can be assured  for every maximal monotone graph $\beta $.

In order to state the next result, we here define
\begin{align*}
TV _{\Omega } (\xi ) &:= \sup \LD 
\int_{\Omega } \xi (x) \nabla \cdot \zeta (x) dx;~
 \zeta  = (\zeta _1 , \zeta _2 ,\ldots , \zeta _d ) \in \mathscr{D} (\Omega ),
\| \zeta \| _{L^{\infty }} \leq 1
\RD , \\
BV (\Omega )
& := \{ \xi \in L^1 (\Omega ) ;~ \| \xi \| _{BV} := \| \xi \| _{L^1} + TV _{\Omega }  (\xi ) < \infty \} ,
\end{align*}
and $BV _{0}(\Omega )$ by the closure of $\mathscr{D} (\Omega )$
with respect to the norm $\| \cdot \| _{BV}$
(see \cite{AFP} \cite{Z}).
\begin{Th}
\label{MTh-12} 
In addition to the assumptions of Theorem \ref{MTh-01} (resp.\ Theorem \ref{MTh-02}),
let $\xi _ 0 \in BV  (\Omega )$ and 
$f \in L^{\infty } (0 ,T ; BV _0 (\Omega )) $.
Then there exist at least one  solution to 
(P) with initial data $(u _ 0 , \xi _0 )$ satisfying 
\begin{equation}
\sup _{0\leq t \leq T } TV _{\Omega } (\xi (\cdot , t) ) \leq T \sup _{0\leq t \leq T } TV _{\Omega } (f (\cdot , t) ) 
+ TV _{\Omega } (\xi _0 )
\label{MT-13} 
\end{equation}
in addition to the properties in Theorem \ref{MTh-01} (resp.\ Theorem \ref{MTh-02}).
\end{Th}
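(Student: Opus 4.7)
The plan is to propagate the $BV$ bound through the time-discretization employed in Section 4. That scheme produces a sequence $\{(u^n,\xi^n)\}_{n=0}^{N}$ with $\tau = T/N$ satisfying an elliptic step of the form
\[
\xi^n - \tau \nabla\cdot \alpha(x,\nabla u^n) = \xi^{n-1} + \tau f^n,\qquad \xi^n \in \beta(u^n),
\]
so it suffices to establish the one-step inequality $TV_\Omega(\xi^n) \leq TV_\Omega(\xi^{n-1}) + \tau\,TV_\Omega(f^n)$, iterate over $n$ to produce $TV_\Omega(\xi^n) \leq TV_\Omega(\xi_0) + T\sup_k TV_\Omega(f^k)$, and then pass to $\tau\to 0$ using the lower semicontinuity of $TV_\Omega$ with respect to the $L^1$-convergence already supplied by the construction in Section 4. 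The hypothesis $f \in L^\infty(0,T;BV_0(\Omega))$ guarantees that the discretized source $f^n$ lies in $BV_0(\Omega)$, so the iteration closes compatibly with the homogeneous Dirichlet boundary condition.

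For the one-step inequality, I would show that the discrete resolvent-type map $g\mapsto \xi$ defined by $\xi - \tau\mathcal{A}u = g$, $\xi \in \beta(u)$, is a $TV$-contraction. First regularize $\beta$ by its Yosida approximation $\beta_\lambda$ so that the elliptic problem has a regular solution $(u_\lambda,\xi_\lambda)$ and the equation can be differentiated in each direction $x_i$. The key observation is that $\partial_i \xi_\lambda = \beta_\lambda'(u_\lambda)\,\partial_i u_\lambda$ with $\beta_\lambda' \geq 0$, so $\sgn(\partial_i \xi_\lambda) = \sgn(\partial_i u_\lambda)$. Testing the $\partial_i$-differentiated equation against a Lipschitz regularization $\sgn_\varepsilon(\partial_i \xi_\lambda)$, summing over $i$, and using the monotonicity \eqref{A-a03} of $\alpha(x,\cdot)$ should force the nonlinear diffusion term to carry the favourable sign, yielding $\sum_i \int_\Omega |\partial_i \xi_\lambda|\,dx \leq \sum_i \int_\Omega |\partial_i g|\,dx$. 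Passing $\varepsilon \to 0$ and then $\lambda\to 0$ through lower semicontinuity of $TV_\Omega$ delivers the desired $TV_\Omega(\xi) \leq TV_\Omega(g)$.

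The main obstacle I foresee is the genuine $x$-dependence of $\alpha$: differentiating $\nabla\cdot\alpha(x,\nabla u)$ in $x_i$ produces a spurious term $\nabla\cdot(\partial_{x_i}\alpha)(x,\nabla u)$ whose sign under the test $\sgn_\varepsilon(\partial_i u_\lambda)$ is not a priori controlled by the hypotheses on $a$. An alternative is to bypass spatial differentiation and exploit the $L^1$-comparison principle for the elliptic step (which follows from $T$-accretiveness of the underlying operator, in the same spirit as the contraction already used to derive \eqref{MT-02}) applied to $g$ and its translate $g(\cdot+h)$ extended by zero: the mismatch $\alpha(x+h,\cdot)-\alpha(x,\cdot) = O(|h|)$ can be absorbed as a perturbation, and dividing by $|h|$, taking the supremum over unit directions, and invoking the Miranda--Temam characterization of $TV_\Omega$ as a limit of difference quotients would recover \eqref{MT-13}. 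Either route, once combined with the iteration in $n$ and the passage $\tau\to 0$, yields the bound for the continuous solution constructed in Theorems~\ref{MTh-01} and \ref{MTh-02} without altering the other estimates already proven.
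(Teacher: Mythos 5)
Your skeleton (a one-step $TV$ estimate for the elliptic resolvent step, iteration in $n$, then lower semicontinuity of $TV_{\Omega}$ as $\tau \to 0$) is exactly the paper's plan (cf.\ \eqref{P-11}, \eqref{P-13}), and that part is fine. The gap is that neither of your two routes actually proves the one-step contraction, which is the entire content of the argument (Remark \ref{Estimate-Ellip}, part 2). In Route 1, differentiating the equation in $x_i$ presupposes second derivatives of $u_{\lambda}$, which (H.$\alpha$) does not provide for the degenerate/singular $p$-Laplacian, and, more seriously, the integration by parts against $\sgn_{\varepsilon}(\partial_i u_{\lambda})$ produces boundary terms of uncontrolled sign, since $\partial_i u_{\lambda}$ does not vanish on $\partial\Omega$; nothing in your write-up addresses this. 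Moreover, summing coordinate-wise only controls the anisotropic quantity $\sum_i \int_{\Omega}|\partial_i \xi_{\lambda}|\,dx$, which differs from $TV_{\Omega}$ by a dimensional constant; any constant $>1$ in the one-step bound either ruins the iteration or at best destroys the constant-$1$ estimate \eqref{MT-13}. To get the isotropic bound with constant $1$ you need the directional estimate for \emph{every} unit direction $e$ and then the averaging identity $\int_{S^{d-1}}|v\cdot e|\,d\sigma(e)=c_d |v|$ --- not a supremum over directions: in $d\ge 2$ the sup of directional variations is strictly smaller than $TV_{\Omega}$ (characteristic function of a ball), so the ``Miranda--Temam via supremum'' step at the end of Route 2 would not recover \eqref{MT-13} either.

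Route 2 is in spirit the paper's proof (translation method), but the two points you pass over quickly are precisely where the work lies. First, the Dirichlet condition: $\xi(\cdot+e)$ is \emph{not} the solution associated with the translated datum, so the $L^1$-comparison cannot simply be ``applied to $g$ and its translate extended by zero.'' The paper makes the translation legitimate by replacing $\beta$ with the truncation $\beta^{\varepsilon}$ vanishing on $[-\varepsilon,\varepsilon]$, invoking continuity of $u^{\varepsilon}$ up to $\partial\Omega$ to get $\supp \mathcal{A}u^{\varepsilon}\subset\subset\Omega$ when $h\in\mathscr{D}(\Omega)$, translating only with $|e|<\dist(\Omega^{\varepsilon},\Omega^{c})$, and then removing the approximations ($\varepsilon\to 0$, Yosida parameter $\lambda\to 0$, and density of $\mathscr{D}(\Omega)$ in $BV_0(\Omega)$ together with the $L^1$-contraction of Theorem \ref{Th3-2}); this is also exactly why $f(t)\in BV_0(\Omega)$ rather than $BV(\Omega)$ is assumed, a point your proposal uses only as ``compatibility.'' Second, the $x$-dependence of $\alpha$: (H.$\alpha$) gives no Lipschitz-in-$x$ modulus for $\alpha$, so the claimed $O(|h|)$ absorption has no basis, and even with it the error would pollute the exact contraction needed for the iteration. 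The paper does not attempt this; it proves the elliptic $TV$ bound \eqref{BV-Elliptic} only under $\alpha(x,z)=\alpha(z)$, i.e.\ the same structural hypothesis \eqref{Unique} used in Section 5, so your instinct that $x$-dependence is an obstruction is correct, but the resolution is to assume translation invariance, not to perturb. With the one-step isotropic contraction in hand, your iteration and the final passage $\tau\to 0$ by lower semicontinuity agree with the paper.
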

\noindent Combining this fact with Theorem \ref{MTh-02},
we can assure the existence of solution belonging to  $BV (Q)$
for general nonlinearity $\beta $
by choosing an appropriate initial value and external force.

Moreover, we can  obtain 
\begin{Th}
\label{MTh-03} 
If $u_0 \in L^{\infty} (\Omega )$ and $q = \infty $ in Theorem \ref{MTh-01} and \ref{MTh-02},
then the solution to (P) satisfies $u \in L^{\infty} (Q)$.
\end{Th}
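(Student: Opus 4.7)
The plan is to adapt Stampacchia's truncation method to the regularized scheme constructed in the proof of Theorem~\ref{MTh-01}. Let $u^\lambda$ denote the solution of the Yosida-approximated equation $\partial_t \beta_\lambda(u^\lambda) + \mathcal{A} u^\lambda = f$, so that $\xi^\lambda := \beta_\lambda(u^\lambda)$ depends Lipschitz-continuously on $u^\lambda$. Because $q = \infty$, the estimate \eqref{MT-02} applied at the $\lambda$-regularized level furnishes the uniform bound $\|\xi^\lambda\|_{L^\infty(Q)} \leq K := T\|f\|_{L^\infty(Q)} + \|\xi_0\|_{L^\infty}$, together with a uniform $L^\infty(0,T; W^{1,p}_0)$ bound on $u^\lambda$. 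Set $M := \|u_0\|_{L^\infty}$.

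For every level $k \geq M$ I would test the regularized equation with $(u^\lambda - k)_+$, which lies in $W^{1,p}_0(\Omega)$ because $u^\lambda|_{\partial\Omega} = 0 \leq k$. The Lipschitz chain rule rearranges the time term as
\[
\int_\Omega \partial_t \beta_\lambda(u^\lambda)\,(u^\lambda - k)_+\,dx
= \frac{d}{dt} \int_\Omega F_\lambda^k(u^\lambda)\,dx,
\qquad
F_\lambda^k(s) := \int_0^s \beta_\lambda'(r)(r - k)_+\,dr,
\]
where $F_\lambda^k \geq 0$ vanishes on $\{s \leq k\}$; in particular $F_\lambda^k(u_0) \equiv 0$ since $u_0 \leq k$. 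The diffusion term is bounded below, via \eqref{A-a03} with $z_2 = 0$ and $\alpha(x, 0) = 0$, by $c\int |\nabla(u^\lambda - k)_+|^p\,dx$ (minus a harmless lower-order $|A(k, t)|$ term in the case $1 < p < 2$). Writing $A(k) := |\{(x, t) \in Q : u^\lambda(x, t) > k\}|$ and integrating over $[0, T]$, we arrive at the master inequality
\[
c \iint_Q |\nabla(u^\lambda - k)_+|^p\,dx\,dt \leq \|f\|_{L^\infty}\iint_Q (u^\lambda - k)_+\,dx\,dt + c'|A(k)|.
\]

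When $p > d$, the Sobolev embedding $W^{1,p}_0 \hookrightarrow L^\infty$ already furnishes $u^\lambda \in L^\infty(Q)$ uniformly from Theorem~\ref{MTh-01}. When $p \leq d$, I would feed the master inequality into a parabolic Stampacchia iteration: combining it with the Sobolev embedding $W^{1,p}_0 \hookrightarrow L^{p^*}$, the uniform $L^\infty(0,T; L^{p^*})$ bound inherited from Theorem~\ref{MTh-01}, and H\"older inequalities in both space and time, one obtains an estimate of the form $A(h) \leq C(h - k)^{-\gamma} A(k)^{1+\delta}$ for all $h > k \geq M$ with some $\delta > 0$. Stampacchia's lemma then supplies a finite threshold $K^*$ at which $A(K^*) = 0$. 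A symmetric argument applied to $(-u^\lambda - k)_+$ yields the lower bound $u^\lambda \geq -K^*$, and the resulting estimate, being independent of $\lambda$, is preserved on passing $\lambda \downarrow 0$ via the a.e.\ convergence $u^\lambda \to u$ established in the proof of Theorem~\ref{MTh-01}.

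The main obstacle is the iteration step when $p \leq d$. For arbitrary $\beta$ the time-term energy $F_\lambda^k$ may degenerate as $\lambda \downarrow 0$ and cannot, as in classical doubly nonlinear treatments with Lipschitz or coercive $\beta$ (e.g.\ \cite{AL}, \cite{Bam}), be used to generate an $L^\infty_t L^q_x$ control of $(u^\lambda - k)_+$ that would close the iteration on its own. It is precisely the \emph{a priori} $L^\infty(0,T; L^{p^*})$ bound from Theorem~\ref{MTh-01} that must substitute for this missing time-energy and supply the super-linear power of $A(k)$ needed in the level-set inequality.
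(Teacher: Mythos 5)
The decisive problem is the one you flag yourself at the end, and your proposal does not actually overcome it: the level-set iteration cannot close, and the a priori $L^{\infty}(0,T;L^{p^{*}})$ bound cannot substitute for the missing time energy. Once the degenerate term $F^{k}_{\lambda}$ is discarded, the only quantity in your master inequality that is \emph{small in terms of} $|A(k)|$ is $\iint_{Q}|\nabla (u^{\lambda}-k)_{+}|^{p}\,dx\,dt$, i.e.\ the $L^{p}_{t}L^{p^{*}}_{x}$ norm of $(u^{\lambda}-k)_{+}$; the $L^{\infty}_{t}L^{p^{*}}_{x}$ bound on $(u^{\lambda}-k)_{+}$ inherited from Theorem \ref{MTh-01} is a fixed constant carrying no dependence on the level $k$. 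Any H\"older/interpolation chain built from these two ingredients yields at best $|A(h)|\leq C (h-k)^{-\gamma}|A(k)|^{\delta}$ with $\delta\leq 1$ (typically $\delta = 1-1/p^{*}$, or exactly $1$ at the borderline choice of exponents), whereas Stampacchia's lemma requires $\delta>1$ to produce a finite threshold $K^{*}$; with $\delta\leq1$ one only gets algebraic decay of $|A(k)|$, not vanishing. The superlinear gain in the parabolic De Giorgi--Stampacchia scheme comes precisely from controlling $\sup_{t}\|(u-k)_{+}(t)\|_{L^{m}}$ by level-set quantities, i.e.\ from the time energy that is unavailable for general $\beta$. The paper avoids this entirely by using Moser's iteration instead of level sets: it tests with the power-type truncations $K^{p(r-1)+2}_{M}(u)$, justifies the pairing with $\partial_{t}\xi$ by Carrillo's chain-rule formula \eqref{Lem-Cal}, and exploits $0\leq \Gamma(\xi(t))$ together with $\Gamma(\xi_{0})\leq \|\xi_{0}\|_{L^{\infty}}|u_{0}|^{p(r-1)+1}$, so the time term is simply dropped by sign; the iteration then runs on integrability exponents $r\mapsto \mu r$ and needs no superlinear level-set inequality. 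This is where the hypotheses $u_{0}\in L^{\infty}(\Omega)$, $q=\infty$ enter, exactly as in \eqref{MT-02}.

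A second, structural gap: the Yosida-regularized parabolic problem $\partial_{t}\beta_{\lambda}(u^{\lambda})+\mathcal{A}u^{\lambda}=f$ is not the approximation used in this paper, and its well-posedness is itself one of the difficulties the paper is built to avoid when $1<p<2$ (the realization of the Lipschitz function $\beta_{\lambda}$ in $L^{p}(\Omega)$ is not Lipschitz for $p<2$; cf.\ the discussion of Barbu's result in the introduction). The estimate \eqref{MT-02} is established for the time-discretization scheme \eqref{TDP} and its limit, not for such $u^{\lambda}$, and no a.e.\ convergence $u^{\lambda}\to u$ is proved anywhere; moreover, since uniqueness is only known for single-valued $\beta$ with $\alpha(x,z)=\alpha(z)$, an $L^{\infty}$ bound obtained along an independent approximation would not automatically transfer to \emph{the} solution constructed in Theorems \ref{MTh-01} and \ref{MTh-02}. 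You would need to run your truncation argument either at the discrete level \eqref{TDP} or, as the paper does, directly on the limit solution via \eqref{Lem-Cal} --- but even after repairing this, the first gap (no superlinear level-set inequality) remains and is fatal to the Stampacchia route as described.
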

\noindent Such $L^{\infty}$-estimate of $u$ is already given in  B\'{e}nilan--Wittbold \cite{BW}.
In this paper, we shall give a simple proof
by standard Moser's iteration. 

\begin{Rem}
Regularity given in Definition \ref{MDe-01} is enough to apply Lemma 1.5 of Alt--Luckhaus \cite{AL}.
Namely, it holds that
\begin{equation}
\int_{\Omega } j^{\ast} ( \xi (x, t_2 ))dx  -\int_{\Omega } j^{\ast} ( \xi ( x, t_1 ))dx 
= \int_{t_1}^{t_2}  \LA \partial _t  \xi (t) , u (t ) \RA _{W^{1,p} _ 0 }  dt 
\label{L-AL} 
\end{equation}
 for any $t _ 1 , t_2 \in [0,T]$.
\end{Rem}

\section{Elliptic Problem}
We first deal with the following elliptic problem:
\begin{equation}
\begin{cases}
~~ \xi (x) - \nabla \cdot \alpha (x , \nabla u (x) ) = h (x) ~~~~&~~x \in \Omega ,\\
~~ \xi (x) \in \beta (u (x))  ~~~~&~~x \in \Omega , \\
~~ u(x) = 0   ~~~~&~~x \in \partial \Omega .
\end{cases}
\tag{E$_{h}$}
\label{Ellip} 
\end{equation}
If $D(\beta ) = \R$,
we can easily obtain the solvability
of \eqref{Ellip} for any $h \in W^{-1 , p'} (\Omega )$
by considering the variational problem
of $I (u) := \psi (u) + \varphi (u) - \LA h , u \RA _{W^{1,p}_0}$
on $W ^{1, p} _0 (\Omega)$ and using the result by Br\'{e}zis \cite{Br-Conv},
where 
$\varphi $ and $\psi $
is defined in \eqref{p-Lap} and \eqref{beta} with $r=p$.
Otherwise, however,
$\partial _{W ^{1, p} _0} \psi (u) $ is defined in the sense of distribution 
and we can not  assure that $\xi (x) \in \beta (u (x))$ holds for each $x \in \Omega $.
To cope with this difficulty,
we here slightly restrict the integrability of $h$
and  construct a strong solution to \eqref{Ellip}.
\begin{Th}
\label{Th3-1}
 Assume 
(H.$\alpha $) and  (H.$\beta $) with $p \in (1, \infty )$. 
Let $h \in L^{q } (\Omega) \cap L^{p' } (\Omega )$
with some $q \in [1 , \infty ]$. Then \eqref{Ellip} possesses a unique solution
$u \in W^{1,p} _0 (\Omega ) $
such that $\xi , \mathcal{A} u \in  L^{q } (\Omega) \cap L^{p' } (\Omega )$ and 
\begin{equation}
\| \xi \| _{L ^q} \leq \| h \| _{L^q} ,~~~\| \xi \| _{L ^{p'}} \leq \| h \| _{L^{p'}} ,
\label{Th-E1} 
\end{equation}
where
$\mathcal{A} u (x) := 
- \nabla \cdot (\alpha (x , \nabla u (x)))$.
\end{Th}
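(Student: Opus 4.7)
The strategy is to approximate $\beta$ by its Yosida regularization $\beta_{\lambda}$, solve the resulting single-valued problem by direct variational methods, derive $\lambda$-independent $L^{p'}$ and $L^{q}$ flux bounds that survive weak limits, and then identify the weak limit $\xi$ of $\xi_{\lambda} := \beta_{\lambda}(u_{\lambda})$ as an element of $\beta(u)$ via a monotonicity argument based on the identity $\beta_{\lambda}(s) \in \beta(J_{\lambda}s)$.

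For each $\lambda > 0$, I would obtain $u_{\lambda} \in W^{1,p}_{0}(\Omega)$ solving the approximate equation $\beta_{\lambda}(u_{\lambda}) - \nabla \cdot \alpha(x,\nabla u_{\lambda}) = h$ by minimizing the coercive, lower semicontinuous, strictly convex functional $v \mapsto \int_{\Omega} j_{\lambda}(v)\,dx + \varphi(v) - \int_{\Omega} h v\,dx$ on $W^{1,p}_{0}(\Omega)$; coercivity uses $j_{\lambda} \geq 0$ together with the bound $a(x,z) \geq c|z|^{p} - C$. Testing the Euler--Lagrange equation against $u_{\lambda}$ and using $\beta_{\lambda}(u_{\lambda}) u_{\lambda} \geq 0$ as well as the coercivity $\alpha(x,z)\cdot z \geq c|z|^{p} - C$ (derived from convexity of $a$ and $\alpha(x,0) = 0$) yields a uniform bound on $\|u_{\lambda}\|_{W^{1,p}_{0}}$.

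For the crucial flux bounds, I would test the equation against (a smooth truncation of) $|\xi_{\lambda}|^{r-2}\xi_{\lambda}$ for $r \in \{p', q\}$. Since $\beta_{\lambda}$ is Lipschitz with $\beta_{\lambda}(0)=0$, the chain rule gives $\nabla \xi_{\lambda} = \beta_{\lambda}'(u_{\lambda}) \nabla u_{\lambda}$, so the $\mathcal{A} u_{\lambda}$-contribution
\begin{equation*}
(r-1) \int_{\Omega} |\xi_{\lambda}|^{r-2} \beta_{\lambda}'(u_{\lambda})\, \alpha(x, \nabla u_{\lambda}) \cdot \nabla u_{\lambda}\,dx \geq 0
\end{equation*}
is nonnegative, and H\"{o}lder then yields $\|\xi_{\lambda}\|_{L^{r}} \leq \|h\|_{L^{r}}$. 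The endpoints $r = 1, \infty$ are obtained by Stampacchia-type truncation: testing against $(|\xi_{\lambda}| - k)_{+}\sgn \xi_{\lambda}$ with $k = \|h\|_{L^{\infty}}$ rules out the level set $\{|\xi_{\lambda}| > k\}$, while $r \downarrow 1$ delivers the $L^{1}$ case. The equation then forces $\mathcal{A} u_{\lambda} = h - \xi_{\lambda}$ to be bounded in $L^{p'} \cap L^{q}$.

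Passing to a subsequence, $u_{\lambda} \rightharpoonup u$ weakly in $W^{1,p}_{0}(\Omega)$ and a.e., while $\xi_{\lambda} \rightharpoonup \xi$ weakly in $L^{p'} \cap L^{q}$; weak lower semicontinuity transfers \eqref{Th-E1} to the limit. The standard $p$-Laplacian argument (test with $u_{\lambda} - u$, use $\int \xi_{\lambda}(u_{\lambda}-u) \to 0$, and apply (A-a03)) gives $\nabla u_{\lambda} \to \nabla u$ almost everywhere, so $\alpha(x, \nabla u_{\lambda}) \rightharpoonup \alpha(x, \nabla u)$ and the limit equation holds. To identify $\xi \in \beta(u)$, I would invoke $\xi_{\lambda}(x) \in \beta(J_{\lambda}u_{\lambda}(x))$ with $J_{\lambda}u_{\lambda} = u_{\lambda} - \lambda \xi_{\lambda} \to u$ in $L^{p}$ (since $\lambda \xi_{\lambda} \to 0$ in $L^{p'}$); the monotonicity of $\beta$ then gives $\int_{\Omega} (\xi_{\lambda} - \eta)(u_{\lambda} - \lambda \xi_{\lambda} - v)\,dx \geq 0$ for every $[v, \eta] \in \beta$ with bounded representatives, and passing to the limit together with maximality of $\widetilde{\beta}$ yields $\xi(x) \in \beta(u(x))$ almost everywhere. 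Uniqueness follows by subtracting two solutions, testing with $u_{1} - u_{2}$, and combining the strict monotonicity of $\alpha$ from (A-a03) with the monotonicity of $\beta$. I expect the main obstacle to be the combined flux estimate and $\xi \in \beta(u)$ identification: the multi-valuedness of $\beta$ and the possibility $D(\beta) \neq \mathbb{R}$ preclude working with $\widetilde{\beta}$ as a subdifferential on $W^{1,p}_{0}$, which is the very reason the Yosida regularization, carried out in the $L^{r}$ framework where $\widetilde{\beta_{\lambda}}$ is Lipschitz, becomes indispensable.
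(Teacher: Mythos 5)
Your overall scheme is the same as the paper's: Yosida-regularize $\beta$, solve the approximate problem $\beta_{\lambda}(u_{\lambda})+\mathcal{A}u_{\lambda}=h$ variationally, obtain the $L^{p'}$ and $L^{q}$ bounds by testing with Lipschitz truncations of $\beta_{\lambda}(u_{\lambda})$, and identify the limit via $\beta_{\lambda}(u_{\lambda})\in\beta(J_{\lambda}u_{\lambda})$ together with maximal monotonicity. However, there is a genuine gap exactly where the paper locates the main difficulty: the case $1<p<2$. Your plan passes from the minimizer of $v\mapsto\int_{\Omega}j_{\lambda}(v)\,dx+\varphi(v)-\int_{\Omega}hv\,dx$ to the strong Euler--Lagrange equation and then freely tests that equation. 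For $p\geq 2$ this is justified (Lipschitz continuity of $\beta_{\lambda}$ gives $D(\partial_{L^{p}}\psi_{1})=L^{p}(\Omega)$ and the subdifferential sum rule applies), but for $p<2$ the function $\beta_{\lambda}(u_{\lambda})$ has only linear growth, so for $u_{\lambda}\in L^{p}(\Omega)$ it need not belong to $L^{p'}(\Omega)=(L^{p}(\Omega))^{\ast}$; equivalently, the realization of $\beta_{\lambda}$ is not Lipschitz (indeed not even everywhere defined) from $L^{p}(\Omega)$ into $L^{p'}(\Omega)$ --- the paper's example $\beta=\id$, $\beta_{\lambda}=\tfrac{1}{1+\lambda}\id$ already shows this --- so $D(\partial_{L^{p}}\psi_{1})$ may be a proper subset of $L^{p}(\Omega)$ and the identification of the minimizer as a solution of \eqref{Ellip-AP} with $\beta_{\lambda}(u_{\lambda}),\,\mathcal{A}u_{\lambda}\in L^{p'}(\Omega)$ cannot be taken for granted; note also that your flux estimates presuppose exactly this regularity, so the argument is circular as written. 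Your closing remark, that the Yosida regularization works because ``$\widetilde{\beta_{\lambda}}$ is Lipschitz in the $L^{r}$ framework,'' is precisely the claim that fails in this regime. The paper resolves it by a second regularization: it adds $\tfrac{\varepsilon}{2}\|u\|^{2}_{L^{2}}$, works in $L^{2}(\Omega)$ where $D(\partial_{L^{2}}\psi_{1})=L^{2}(\Omega)$, solves \eqref{Ellip-AP2}, derives $\varepsilon$-uniform bounds on $\beta_{\lambda}(u_{\varepsilon})$ in $L^{p'}\cap L^{q}$, and only then lets $\varepsilon\to 0$ to obtain the $\lambda$-problem, before sending $\lambda\to 0$. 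Some substitute for this step (or a careful direct derivation of the weak Euler--Lagrange identity against $W^{1,p}_{0}\cap L^{\infty}$ test functions followed by truncation arguments that never assume $\beta_{\lambda}(u_{\lambda})\in L^{p'}$ a priori) is needed; your proposal is silent on it.

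A smaller point: your justification that $J_{\lambda}u_{\lambda}=u_{\lambda}-\lambda\beta_{\lambda}(u_{\lambda})\to u$ strongly in $L^{p}(\Omega)$ ``since $\lambda\beta_{\lambda}(u_{\lambda})\to 0$ in $L^{p'}$'' only works when $p\leq 2$ (so that $L^{p'}\hookrightarrow L^{p}$); for $p>2$ one needs an extra argument, e.g.\ the paper's route (contraction property of $J_{\lambda}$ plus dominated convergence applied to $J_{\lambda}u\to u$) or a Vitali-type argument using the pointwise bound $|\lambda\beta_{\lambda}(u_{\lambda})|\leq 2|u_{\lambda}|$. This is fixable, but as stated the step is incomplete. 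The remaining ingredients (uniqueness via \eqref{A-a03}, the truncation estimates giving \eqref{Th-E1}, the identification of the elliptic term --- where your strong gradient convergence argument is an acceptable alternative to the paper's demiclosedness argument) are sound.
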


\begin{proof}[Proof of Theorem \ref{Th3-1}]
The uniqueness of solution is  shown by the standard energy estimate.
Let $(u _i , \xi _ i) $ ($i =1 ,2 $) be solutions to \eqref{Ellip}.
Testing the difference of equations by $(u_1 - u_2)$,
we  have
\begin{equation*}
\int_{\Omega } (\alpha (x ,\nabla  u_1 (x)) -\alpha (x, \nabla  u_2 (x)) ) (\nabla  u_1 (x) - \nabla   u_2 (x)) dx
\leq 0 .
\end{equation*}
Then we obtain $u_1 \equiv u_ 2$ by  \eqref{A-a03}, and  immediately $\xi _1 \equiv \xi _2$.
Hence we only have to discuss the existence of solution to \eqref{Ellip}.

To this end,
we 
consider the variational problem for  
$I_1 (u) := \psi _1 (u) + \varphi  (u) - \LA h, u \RA _{L^p}$ in $L^{p} (\Omega )$, where
$\varphi $ is given in \eqref{p-Lap} with $r = p $
and 
\begin{equation}
\psi _1 (u) := 
\begin{cases}
~~\DS \int _{\Omega } j _{\lambda } (u (x)) dx ~&~ \text{if }  u \in L^p (\Omega ) ,~ j _{\lambda } (u) \in L^1 (\Omega ), \\
~~ +\infty ~&~\text{otherwise. }
\end{cases}
\label{EL-F-01} 
\end{equation}
Obviously, $\varphi $ and $\psi _1$ are proper lower semi-continuous convex on $L^{p}(\Omega )$.
Remark that $\partial _{L^p} \psi _1 $ may not  coincide with 
the subdifferential of the Moreau--Yosida regularization of \eqref{beta}, i.e.,
the Yosida approximation of $\widetilde{\beta }$
when $p\neq 2$
(for instance, let $\beta = \id$. 
Then the realization of $J_{\lambda } = \beta _{\lambda } = 1 / (1 + \lambda )$
is not Lipschitz continuous on $L^p (\Omega )$ into $L^{p'} (\Omega )$ if $p < 2$).
Since $I _1 $ is bounded from below for any $u \in L^{p} (\Omega )$,
 there exist a global minimizer, denoted by $u _{\lambda }$.

If $p \geq 2$, the Lipschitz continuity of $ \beta _{\lambda   }$ yields 
$D(\widetilde{ \beta _{\lambda }  }) = L^{p} (\Omega )$.
By Theorem 2.10 of \cite{Bar},
we have $\partial _{L^p} I_ 1  = \partial _{L^p} \psi _1 + \partial _{L^p} \varphi  - h  $,
which implies that the minimizer $u _{\lambda }$ becomes 
a unique solution to 
\begin{equation}
\begin{cases}
~~ \beta _{\lambda } (u _{\lambda } (x)) + \mathcal{A} u _{\lambda } (x) = h (x) ~~~~&~~x \in \Omega ,\\
~~ u _{\lambda } (x) = 0   ~~~~&~~x \in \partial \Omega ,
\end{cases}
\label{Ellip-AP} 
\end{equation}
such that 
$u _{\lambda } \in W^{1,p} _0 (\Omega )$ and 
$\widetilde{\beta _{\lambda } }(u _{\lambda } ) ,  \mathcal{A} u_{\lambda } \in L^{p'} (\Omega )$.


Testing \eqref{Ellip-AP} by $u _{\lambda }$, we have
\begin{equation*}
c \| \nabla  u _{\lambda } \| ^{p-1 } _{L^p} \leq C_1 \| h \| _{L^{p'}}    
\end{equation*}
by $\beta _{\lambda } (0) = 0 $, \eqref{A-a02},  and  \eqref{A-a03},
where $C_1  $ is a constant arising from Poincar\'{e}'s inequality.
 Next we multiply \eqref{Ellip-AP} by $k ^{p'}_m ( \beta _{\lambda } (u _{\lambda })) $,
where $m> 0 $ and
\begin{equation*}
k ^{r}_m  (s ) := 
\begin{cases}
~~|s | ^{r -2 } s ~&~\text{if } |s| > m ,\\
~~m ^{r -2 } s ~&~\text{if } |s| \leq m  .
\end{cases}
\end{equation*}
Since $\beta _{\lambda }$ and $k ^{r}_m$  with $r \leq 2$ are Lipschitz continuous
and $k ^{r}_m (\beta _{\lambda } (0))  = 0 $,
we have by \eqref{A-a02}
\begin{equation*}
 \int _{\Omega } k ^{p'}_m ( \beta _{\lambda } (u _{\lambda }(x) ))\mathcal{A}  u _{\lambda } (x)  dx \geq 0 .
\end{equation*}
Moreover, from the fact that
\begin{align*}
&\int_{\Omega} k ^{p ' } _m (\beta _{\lambda } (u _{\lambda }) ) \beta _{\lambda } (u _{\lambda })dx \\
= &
\int_{\{ x \in \Omega ;~ |\beta _{\lambda } (u _{\lambda }(x)) | > m \} }
 | \beta _{\lambda } (u _{\lambda }) | ^{p ' } dx 
 +\int_{\{ x \in \Omega ;~ |\beta _{\lambda } (u _{\lambda }(x)) | \leq m \} }
    m ^{p' -2 } | \beta _{\lambda } (u _{\lambda }) | ^{2 } dx \\
\geq &
\int_{\{ x \in \Omega ;~ |\beta _{\lambda } (u _{\lambda }(x)) | > m \} }
 |k ^{p'}_m ( \beta _{\lambda } (u _{\lambda })) | ^p  dx 
 +\int_{\{ x \in \Omega ;~ |\beta _{\lambda } (u _{\lambda }(x)) | \leq m \} }
    m ^{p' -p } | \beta _{\lambda } (u _{\lambda }) | ^{p } dx \\
		= & \int_{ \Omega }
 |k ^{p'}_m ( \beta _{\lambda } (u _{\lambda })) | ^p  dx ,
\end{align*}
 we  can derive
\begin{equation*}
\LC \int_{ \Omega }
 |k ^{p'}_m ( \beta _{\lambda } (u _{\lambda })) | ^p  dx  \RC ^{1 / p'} 
 \leq \| h \| _{L^{p'}} . 
\end{equation*}
Passing the limit as $m\to 0$ and using  Fatou's lemma, we obtain 
\begin{equation*}
\| \widetilde{ \beta _{\lambda } }(u _{\lambda }) \| _{L^{p'}} \leq  \| h \| _{L^{p'}} .
\end{equation*}
By the same reasoning, it holds that
\begin{equation}
\| \widetilde{ \beta _{\lambda } } (u _{\lambda }) \| _{L^{q}} \leq  \| h \| _{L^{q}} 
\label{Th-E3} 
\end{equation}
for $q \in [1, 2 )$
(substitute  $\sgn _{m } ( \beta _{\lambda } (u _{\lambda }) )$
for  $k ^{q}_m ( \beta _{\lambda } (u _{\lambda }))$ if $q =1 $). 
When $ q \geq  2$, we replace $k ^{q  } _m $ with $K^{q  } _M $,
where $M>0 $ and 
\begin{equation*}
K ^{r}_M (s ) := 
\begin{cases}
~~|s | ^{r -2 } s ~&~\text{if } |s| \leq M ,\\
~~ M ^{r -1 } \sgn s  ~&~\text{if } |s| > M  .
\end{cases}
\end{equation*} 
Since $K ^{r}_M$ is Lipschitz continuous  for $r \geq 2$ and satisfies 
$K ^{r}_M (0) = 0$, 
 we have by \eqref{A-a02} 
\begin{equation*}
 \int _{\Omega } K ^{q}_M ( \beta _{\lambda } (u _{\lambda } (x)))
 \mathcal{A}  u _{\lambda } (x)  dx \geq 0 
\end{equation*}
and
\begin{align*}
&\int_{\Omega} K ^{q  } _M (\beta _{\lambda } (u _{\lambda }) ) \beta _{\lambda } (u _{\lambda })dx \\
= &
\int_{\{ x \in \Omega ;~ |\beta _{\lambda } (u _{\lambda }(x)) | \leq M \} }
 | \beta _{\lambda } (u _{\lambda }) | ^{q  } dx 
 +\int_{\{ x \in \Omega ;~ |\beta _{\lambda } (u _{\lambda }(x)) | > M \} }
    M ^{q -1 } | \beta _{\lambda } (u _{\lambda }) |  dx \\
\geq &
\int_{\{ x \in \Omega ;~ |\beta _{\lambda } (u _{\lambda }(x)) | \leq M \} }
 | \beta _{\lambda } (u _{\lambda }) | ^{q  } dx 
 +\int_{\{ x \in \Omega ;~ |\beta _{\lambda } (u _{\lambda }(x)) | > M \} }
    M ^{q  }  dx \\
		= & \int_{ \Omega }
 |K ^{q}_M ( \beta _{\lambda } (u _{\lambda })) | ^{q'}  dx ,
\end{align*}
which implies \eqref{Th-E3} with $q \in [2 , \infty )$.


By uniform boundedness given above, there exist a subsequence $\{ \lambda _n \} _{n \in \N }$
of $\{ \lambda \}$
such that $\lambda _n \to 0$ and
\begin{equation*}
\begin{cases}
~~u _{\lambda _n} \to \exists u ~~&~\text{strongly in }L^p(\Omega )\text{ and weakly in }W^{1,p} _ 0 (\Omega ), \\
~~\widetilde{\beta _{\lambda _n} }  (u _{\lambda }) \to \exists \xi  ~~&~\text{weakly in }L^q(\Omega )\text{ and }L^{p'}(\Omega ),\\
~~\mathcal{A} u _{\lambda _n} \to \exists \eta  ~~&~\text{weakly in }L^q(\Omega )\text{ and }L^{p'}(\Omega ),
\end{cases}
\end{equation*}
as $n \to \infty $ (we use the Dunford-Pettis theorem for $q = 1$).
The demi-closedness of maximal monotone operator 
leads to $\eta =  \mathcal{A} u$.
Since 
$\widetilde{J _{\lambda _n} }u _{\lambda _n}  - \widetilde{J_{\lambda _n } }u \to 0$ strongly in $L^{p} (\Omega )  $ and 
$\widetilde{J _{\lambda _n} } u _{\lambda _n}  -u _{\lambda _n} \to 0 $ strongly in $L^{p' } (\Omega )  $ by 
the Lipschitz continuity of $J _{\lambda } $ and uniform boundedness of $
\| \widetilde{\beta _{\lambda }} (u_{\lambda }) \| _{L^{p'}}$,
we can see that $\widetilde{J _{\lambda _n} } u \to u $
 strongly in $L^{p'} (\Omega )  $ (remark that $p' \leq p $).
Then from 
Lebesgue's dominated convergence theorem,
it follows that 
$\widetilde{J _{\lambda _n} }u _{\lambda _n} \to  u $ strongly in $L^{p} (\Omega )  $.
Hence by the maximal monotonicity of $\widetilde{\beta }$ and the fact that
$\beta _{\lambda } (u _{\lambda } (x)) \in \beta (J_{\lambda } u_{\lambda } (x) ) $ for a.e.\ $x \in \Omega $,
we can see  that  $\xi \in \widetilde{\beta }(u )$ in $L^{p'} (\Omega) $,
i.e.,   $\xi (x) \in \beta (u (x))$ for a.e.\  $x \in \Omega$.
%
Therefore the equation of \eqref{Ellip-AP} weakly converges to the equation of \eqref{Ellip} in $L^{p'} (\Omega )$
and the limit $u$ is a (unique) solution to  \eqref{Ellip}.
Moreover, \eqref{Th-E1} can be obtained as  the limit of \eqref{Th-E3} as $\lambda \to 0 $.

Next  suppose that $1 < p < 2$,
where we can not confirm 
whether $D(\partial _{L^p }\psi _1) $ coincides with  $L^{p } (\Omega )$.
To cope with this difficulty,
 we begin with 
the variational problem of 
$I _{2} (u): = \frac{\varepsilon }{2} \| u \| ^2 _{L^2} + \psi _1 (u) + \varphi (u) - \LA h , u \RA _{L^2}$
(recall $h \in L^{p'} (\Omega ) \cap L^{q} (\Omega ) \subset L^{2} (\Omega )$),
where $\varepsilon >0 $,
$\varphi $ is given in \eqref{p-Lap} with $r = 2 $,
and
$\psi _1 $ in \eqref{EL-F-01} with $L^p$ replaced by $L^2$.
Thanks to the approximation term $\frac{\varepsilon }{2} \| u \| ^2 _{L^2}$, 
$I _{2} $ attains its minimum in $L^2 (\Omega )$.
Since 
$D (\partial _{L^2 }\psi _1  ) = L^2 (\Omega )$,
the global minimizer $u_{\varepsilon }$ is a (unique) solution to the following Euler-Lagrange 
equation:
\begin{equation}
\begin{cases}
~~ \varepsilon u_{\varepsilon } +
		\beta _{\lambda } (u_{\varepsilon }(x)) + \mathcal{A} u_{\varepsilon } (x) = h (x) ~~~~&~~x \in \Omega ,\\
~~ u_{\varepsilon } (x) = 0   ~~~~&~~x \in \partial \Omega ,
\end{cases}
\label{Ellip-AP2} 
\end{equation}
where 
$u_{\varepsilon } \in D(\partial _{L^2 }\psi _1) \cap D(\partial _{L^2 }\varphi)$,
i.e.,
$u_{\varepsilon } \in L^2 (\Omega ) \cap W^{1, p} _0 (\Omega )$
and $\widetilde{\beta _{\lambda }} (u_{\varepsilon } ) , \mathcal{A} u_{\varepsilon } \in L^2(\Omega )$.

Multiplying \eqref{Ellip-AP2} by $u _{\varepsilon }$, we get 
 \begin{equation*}
\frac{\varepsilon }{2} \| u _{\varepsilon } \| ^2 _{L^2}  +c \| \nabla u _{\varepsilon } \| ^p _{L^p} 
\leq C_2 \| h \| ^{p'} _{L^{p'}},
\end{equation*}
where $C_2 > 0$ is a  general constant independent of $\varepsilon $.
Testing \eqref{Ellip-AP2} by $K ^{p'} _M  (u _{\varepsilon } )$,
we have
 \begin{equation*}
\varepsilon  \| K ^{p'} _M (u _{\varepsilon } ) \| ^p _{L^p} 
\leq  \| h \|  _{L^{p'}}  \| K ^{p'} _M (u _{\varepsilon } ) \|  _{L^p} ,
\end{equation*}
which implies 
 \begin{equation*}
\varepsilon  \| u _{\varepsilon }  \|  _{L^{p'}} 
=
\varepsilon  \| |u _{\varepsilon }|^{p' -2}u _{\varepsilon }   \|  ^{p-1}_{L^{p}} 
\leq  \| h \|  _{L^{p'}}.
\end{equation*}
Repeating the same procedures as that for $p \geq 2 $, 
we obtain 
 \begin{equation*}
 \| \beta _{\lambda } (u _{\varepsilon } ) \|  _{L^{p'}} 
\leq  \| h \|  _{L^{p'}},
~~~
 \| \beta _{\lambda } (u _{\varepsilon } ) \|  _{L^{q}} 
\leq  \| h \|  _{L^{q}}.
\end{equation*}
Returning to \eqref{Ellip-AP2},
we get $\| \mathcal{A} u_{\varepsilon } \| _{L^{p'}} \leq 3 \| h \| _{L^{p'}}$.
These estimates imply that
$u_{\varepsilon }$ belongs to 
$D(\partial _{L^p} \varphi) \cap D(\partial _{L^p} \psi _1  )$,
which is included in 
$D(\partial _{L^2} \varphi) \cap D(\partial _{L^2} \psi _1  )$.
Hence we can extract a subsequence of $\{ \varepsilon \}$ (we omit relabeling)
such  that  
\begin{equation*}
\begin{cases}
~~\varepsilon  u_{\varepsilon  } \to 0 ~~&~\text{strongly in }L^2(\Omega )\text{ and weakly in }L^{p'}  (\Omega ),\\
~~u _{\varepsilon } \to \exists u_{\lambda } ~~&~\text{strongly in }L^p(\Omega )\text{ and weakly in }W^{1,p} _ 0 (\Omega ), \\
~~\beta _{\lambda }  (u _{\varepsilon }) \to \exists \xi _{\lambda } 
			~~&~\text{weakly in }L^q(\Omega )\text{ and }L^{p'}(\Omega ),\\
~~\mathcal{A} u _{\varepsilon } \to \exists \eta _{\lambda } ~~&~\text{weakly in }L^q(\Omega )\text{ and }L^{p'}(\Omega ).
\end{cases}
\end{equation*}
The maximal monotonicity of $\partial _{L^p} \varphi _1 $ and $\partial _{L^p} \psi _1$ 
yield 
$\eta _{\lambda }  = \mathcal{A} u_{\lambda }$
and $\xi _{\lambda }  = \widetilde{\beta _{\lambda } }(u _{\lambda })$.
Therefore the limit $u _{\lambda }\in W^{1,p}_0 (\Omega ) $ is 
a (unique) solution to \eqref{Ellip-AP} for $1 < p <2 $.

By tracing  the same procedures as for $p \geq 2$, we obtain
\begin{equation*}
\| \nabla  u_{\lambda } \| ^{p-1}_{L^{p}} \leq C_2 \| h \| _{L^{p'}},~~
\| \widetilde{\beta _{\lambda } }(u _{\lambda }) \| _{L^{p'}} \leq \| h \| _{L^{p'}},~~
\| \widetilde{\beta _{\lambda } }(u _{\lambda }) \| _{L^{q}} \leq \| h \| _{L^{q}},
\end{equation*}
and 
\begin{equation*}
\begin{cases}
~~u _{\lambda } \to \exists u ~~&~\text{strongly in }L^p(\Omega )\text{ and weakly in }W^{1,p} _ 0 (\Omega ), \\
~~\widetilde{\beta _{\lambda } } (u _{\lambda }) \to \exists \xi  ~~&~\text{weakly in }L^q(\Omega )\text{ and }L^{p'}(\Omega ),\\
~~\mathcal{A} u _{\lambda } \to \mathcal{A} u ~~&~\text{weakly in }L^q(\Omega )\text{ and }L^{p'}(\Omega ),
\end{cases}
\end{equation*}
where $ u_{\lambda }$ is a solution to \eqref{Ellip-AP} with $1 < p <2 $.
Furthermore, 
$\widetilde{ J_{\lambda  } }u_{\lambda } -\widetilde{ J_{\lambda  }}u \to 0  $ in $L^p (\Omega )$
and 
$\widetilde{J_{\lambda  } }u_{\lambda } - u_{\lambda  } \to 0  $ in $L^{p'} (\Omega )$
directly lead to $\widetilde{J_{\lambda  }}u_{\lambda } \to u $ strongly in $L^p (\Omega )$
since $p < p'$.
Therefore
we can assure that  $\xi \in \widetilde{\beta }  (u) $
and the limit $u$ is a required solution to \eqref{Ellip}.

If  $h \in L^{\infty } (\Omega )$,
we obtain  \eqref{Th-E1} with $q = \infty $
by taking the limit  as $q \to \infty $
(see Ch.1 {\S}3 Theorem 1 of \cite{Y}).
\end{proof}



\begin{Rem}
\label{Estimate-Ellip} 
For later use, we here establish several a priori estimates of $u $ and $\xi $
with some restrictions on $h$, $\alpha $ and $\beta $.

 1. If $h \in L^{\infty } (\Omega )$, we obtain $u \in L ^{\infty } (\Omega ) $
 by  Moser's iteration technique.
Indeed,  testing \eqref{Ellip} by $K^{p(r - 1) +2 } _M( u)$ with $r \geq 1$
and using \eqref{A-a02},
we have 
 \begin{align*}
  |\Omega |  ^{1 - \frac{p(r-1) +1}{pr}} \| h \| _{L^{\infty }}  \| u  \| _{L^{pr}}  ^{p(r-1) +1 } 
 \geq  &  c r^{-p}  \lim _{M\to \infty }
		 \|  K ^{r+1} _M ( u ) \| ^p _{L^{p\mu } }   \\
 \geq &   c r^{-p}  \|  u  \| ^{pr} _{L^{pr \mu } } .
 \end{align*}
 where $|\Omega |$ is the measure of $\Omega $ and 
 $\mu > 1 $ is a exponent arising from  Sobolev's inequality. 
 Hence putting $r := \mu ^ l $ with $l= 0 , 1 , \ldots $
and letting $l \to \infty $,
we obtain 
$\| u \| _{L^{\infty }} \leq C $,
where $C$ is depending only on $\| h  \| _{L^{\infty }}$, $\mu $, and $|\Omega |$.

2. If $\alpha (x, z) = \alpha (z)$ and  $h \in \mathscr{D} (\Omega )$,
 we obtain $\xi \in BV (\Omega )$ (see Theorem 6 of \cite{C}).
We first prove it for the case where $\beta $ is Lipschitz continuous.
Define 
\begin{equation*}
\beta ^{\varepsilon } (s) 
:= \begin{cases}
~~ \beta (s) - \beta (\varepsilon ) ~~&~~\text{ if } s \geq \varepsilon , \\
~~ 0 ~~&~~\text{ if } - \varepsilon \leq s \leq \varepsilon , \\
~~ \beta (s) - \beta ( - \varepsilon ) ~~&~~\text{ if } s \leq  - \varepsilon ,
\end{cases}
\hspace{5mm} \varepsilon >0 . 
\end{equation*}
Clearly, 
$\beta ^{\varepsilon } $ is Lipschitz continuous with the same constant as $\beta $
and converges to $ \beta $ uniformly in $\R$.
Let $u ^{\varepsilon } \in L^{\infty} (\Omega ) \cap W^{1 ,p } (\Omega )$
be a solution to 
$\beta ^{\varepsilon } (u ^{\varepsilon }) - \mathcal{A} u ^{\varepsilon } = h $.
Since $h - \beta ^{\varepsilon } ( u ^{\varepsilon }) \in L^{\infty } (\Omega )$,
$u ^{\varepsilon }  $ is continuous on $\overline{\Omega}$
(see Ch.\ 4  Theorem 1.1 of \cite{LU}).
By homogeneous Dirichlet boundary condition,
the set $\omega ^{\varepsilon } := \{  x \in \Omega ; |u ^{\varepsilon } (x)| \geq \varepsilon \}$
is compact and included in $\Omega $.
Since $\beta ^{\varepsilon  } ( u ^{\varepsilon  }  ) = 0 $ in $\Omega \setminus \omega ^{\varepsilon } $,
it holds that  $\supp \mathcal{A} u^{\varepsilon } \subset 
( \supp \beta ^{\varepsilon  } ( u ^{\varepsilon  }  ) \cup  \supp h  )  = : \Omega ^{\varepsilon } 
\subset \subset \Omega $.
Then
for any $e \in \R ^d $ such that  $|e| < \dist ( \Omega ^{\varepsilon } , \Omega ^c ) $,
we can verify the following standard calculation of translation method
(use here \eqref{A-a03} and $\alpha (x, z) = \alpha (z)$):
\begin{align*}
0 &\leq \lim_{\lambda \to 0} 
\int _{\Omega } (\alpha (\nabla u ^{\varepsilon  }  (x + e)) -\alpha (\nabla u ^{\varepsilon  }  (x )) ) 
\cdot 
\nabla \sgn _{\lambda } (u ^{\varepsilon  }  (x + e) - u^{\varepsilon  }   (x )) dx\\
&=  
- 
\int _{\Omega } | \beta ^{\varepsilon  } (u ^{\varepsilon  }  (x + e) )- 
\beta ^{\varepsilon  } (u ^{\varepsilon  }  (x ) ) | dx
+ 
\int _{\Omega } | h  (x + e) - h    (x ) | dx ,
\end{align*}
which implies
$TV _{\Omega } ( \beta ^{\varepsilon  } (u ^{\varepsilon  }) ) 
\leq  TV _{\Omega } ( h ) $.
Repeating a priori estimates above,
we have the uniform boundedness of $\| u ^{\varepsilon  }\| _{W^{1,p}}$,
$  \| \mathcal{A} u ^{\varepsilon  } \| _{L^{\infty }}$,
and $  \| \beta ^{\varepsilon  } (u ^{\varepsilon  }  ) \| _{L^{\infty }} $ independent of $\varepsilon $,
which yields
$u _{\varepsilon } \to u $ strongly in $L^{p} (\Omega )$
and  $\mathcal{A} u ^{\varepsilon  } \to \mathcal{A} u $ $\ast$-weakly in $L^{\infty } (\Omega )$
as  $\varepsilon \to 0$.
Moreover, we obtain $\beta ^{\varepsilon  } (u ^{\varepsilon  }  ) \to  \beta (u ) $
$\ast$-weakly in $L^{\infty } (\Omega )$ by the definition of $\beta ^{\varepsilon }$. 
Therefore the limit $u $ coincides with the unique solution to \eqref{Ellip}
and satisfies $TV _{\Omega } ( \beta  (u ) ) 
\leq  TV _{\Omega } ( h ) $
by the lower semi-continuity of $TV _{\Omega }$
(more precisely, we have 
 $\| \nabla \beta  (u ) \| _{L^1} \leq \| \nabla h \| _{L^1}$
since $\beta  (u ) \in  W^{1, p} _0 (\Omega )$). 

If  $\beta $ is not Lipschitz continuous, we  use this fact to \eqref{Ellip-AP}.
Hence by letting $\lambda \to 0$, we obtain 
\begin{equation}
TV _{\Omega } ( \xi ) \leq
\liminf _{\lambda \to 0} TV _{\Omega } ( \beta _{\lambda } (u _{\lambda } ) ) 
\leq  TV _{\Omega } ( h ) ,
\label{BV-Elliptic} 
\end{equation}
which still holds if $h \in BV _0  (\Omega )$.
\end{Rem}

In order to prove Theorem \ref{MTh-02}, we prepare the following:
\begin{Th}
\label{Th3-2}
Let 
$h _i \in   L^{p' } (\Omega )$
and $(u _i , \xi _i) $ be the solution to (E$_{h _ i} $),
where $ i =1,2$.
Then it holds that
\begin{equation}
\| \xi _1 - \xi _2 \| _{L^1 } \leq \| h _1 - h_2 \| _{L^1}.
\label{Th3-2-1} 
\end{equation}
\end{Th}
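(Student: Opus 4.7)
The plan is to deduce the estimate by working with the Yosida-approximated equation \eqref{Ellip-AP}, which was constructed in the proof of Theorem \ref{Th3-1}. Let $u_i^{\lambda} \in W^{1,p}_0(\Omega)$ denote the unique solution to \eqref{Ellip-AP} with right-hand side $h_i$ for $i = 1,2$. Subtracting the two approximate equations produces
\begin{equation*}
\beta_{\lambda}(u_1^{\lambda}) - \beta_{\lambda}(u_2^{\lambda}) + \mathcal{A} u_1^{\lambda} - \mathcal{A} u_2^{\lambda} = h_1 - h_2 \quad \text{in } L^{p'}(\Omega),
\end{equation*}
and I would test this identity against $\sgn_{\varepsilon}(u_1^{\lambda} - u_2^{\lambda})$, which, being Lipschitz with value $0$ at the origin and applied to a $W^{1,p}_0$ function, belongs to $W^{1,p}_0(\Omega) \cap L^{\infty}(\Omega)$.

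After integration by parts the quasilinear contribution becomes
\begin{equation*}
\int_{\Omega} \sgn_{\varepsilon}'(u_1^{\lambda} - u_2^{\lambda}) \LC \alpha(x, \nabla u_1^{\lambda}) - \alpha(x, \nabla u_2^{\lambda}) \RC \cdot \nabla (u_1^{\lambda} - u_2^{\lambda}) \, dx,
\end{equation*}
which is nonnegative thanks to the monotonicity of $\alpha$ in \eqref{A-a03} and the fact that $\sgn_{\varepsilon}' \geq 0$. Consequently
\begin{equation*}
\int_{\Omega} \sgn_{\varepsilon}(u_1^{\lambda} - u_2^{\lambda}) \LC \beta_{\lambda}(u_1^{\lambda}) - \beta_{\lambda}(u_2^{\lambda}) \RC dx \leq \| h_1 - h_2 \|_{L^1}.
\end{equation*}
Monotonicity of $\beta_{\lambda}$ makes the integrand pointwise nonnegative, and crucially the single-valuedness of $\beta_{\lambda}$ forces $\beta_{\lambda}(u_1^{\lambda}) = \beta_{\lambda}(u_2^{\lambda})$ on $\{u_1^{\lambda} = u_2^{\lambda}\}$, so the integrand converges a.e.\ to $|\beta_{\lambda}(u_1^{\lambda}) - \beta_{\lambda}(u_2^{\lambda})|$ as $\varepsilon \to 0$. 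Dominated convergence (with dominating function $|\beta_{\lambda}(u_1^{\lambda})| + |\beta_{\lambda}(u_2^{\lambda})| \in L^{p'} \subset L^1$) then gives $\| \beta_{\lambda}(u_1^{\lambda}) - \beta_{\lambda}(u_2^{\lambda}) \|_{L^1} \leq \| h_1 - h_2 \|_{L^1}$.

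Finally, I would pass $\lambda \to 0$ along a common subsequence extracted as in the proof of Theorem \ref{Th3-1}, along which $\beta_{\lambda}(u_i^{\lambda}) \rightharpoonup \xi_i$ weakly in $L^{p'}(\Omega)$ for $i = 1,2$. Since $p' > 1$ and $\Omega$ is bounded, the Dunford--Pettis theorem promotes this to weak convergence in $L^1(\Omega)$, so weak lower semi-continuity of the $L^1$-norm yields
\begin{equation*}
\| \xi_1 - \xi_2 \|_{L^1} \leq \liminf_{\lambda \to 0} \| \beta_{\lambda}(u_1^{\lambda}) - \beta_{\lambda}(u_2^{\lambda}) \|_{L^1} \leq \| h_1 - h_2 \|_{L^1}.
\end{equation*}
The main subtlety to circumvent is that when $\beta$ is multi-valued, one cannot profitably test the limit equation directly with $\sgn_{\varepsilon}(u_1 - u_2)$: the set $\{u_1 = u_2\}$ may carry genuine mass of $|\xi_1 - \xi_2|$ that such a test function would fail to capture. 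Passing through the Yosida regularization, where $u_1^{\lambda} = u_2^{\lambda}$ does imply $\beta_{\lambda}(u_1^{\lambda}) = \beta_{\lambda}(u_2^{\lambda})$, is precisely what sidesteps this difficulty.
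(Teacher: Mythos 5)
Your proposal is correct and takes essentially the same route as the paper: test the difference of the Yosida-approximated equations \eqref{Ellip-AP} with $h_1$ and $h_2$ by an approximate sign of $u^{\lambda}_1-u^{\lambda}_2$, drop the quasilinear term by the monotonicity \eqref{A-a03}, obtain the $L^1$ contraction at the level of $\beta_{\lambda}$ using its single-valuedness, and pass to the limit $\lambda \to 0$ via the convergences established in the proof of Theorem \ref{Th3-1}. The only cosmetic differences are that you decouple the sign-regularization parameter from $\lambda$ and conclude by weak $L^1$ convergence plus lower semicontinuity of the norm, where the paper invokes uniqueness of the limit and the Dunford--Pettis theorem to the same effect.
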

\begin{proof}[Proof of Theorem \ref{Th3-2}]
Let $u _{\lambda i }$ ($i =1 ,2 $) be a solution to \eqref{Ellip-AP}
with $h  = h_ i $.
Since 
$\sgn _{\lambda }$ is Lipschitz continuous 
and  \eqref{A-a03} is assumed,
we have
\begin{equation*}
\int_{\Omega } \sgn _{\lambda } (u _{\lambda 1} (x)-u _{\lambda 2} (x) ) 
\LC \mathcal{A} u _{\lambda 1} (x)- \mathcal{A} u _{\lambda 2} (x)  \RC dx \geq 0.
\end{equation*}
Hence
multiplying
the difference of equations
by $\sgn _{\lambda } (u _{\lambda 1} -u _{\lambda 2}  ) $
and taking the limit as $\lambda \to +0 $,
we obtain
\begin{equation}
\int _{\{ x \in \Omega ; ~u _{\lambda 1}(x) \neq u _{\lambda 2}  (x) \} }
 |\beta _{\lambda } (u _{\lambda 1} (x) ) -\beta _{\lambda } (u _{\lambda 2}(x)  ) | dx
 \leq 
 \| h_ 1 - h_2  \| _{L^1}.
\label{PRTh-3-2-01} 
\end{equation}
The L.H.S. of \eqref{PRTh-3-2-01} coincides with
$\|\beta _{\lambda } (u _{\lambda 1}  ) -\beta _{\lambda } (u _{\lambda 2} )  \| _{L^1}$
since $\beta _{\lambda }$ is single-valued.

We here recall that 
there exist a  subsequence of $(u _{\lambda i } , \beta _{\lambda } (u _{\lambda i }))$
which converges to the solution of (E$_{h_i }$).
By the uniqueness of solution to (E$_{h_i }$), that is,
the fact that the limit is determined  independently of the choice of subsequences,
the original sequence $(u _{\lambda i } , \beta _{\lambda } (u _{\lambda i }))$
tends to $(u _i , \xi _i )$ without extracting a subsequence.
Moreover,
by \eqref{Th-E3},
the Dunford-Pettis theorem is applicable to the sequence 
$ \{ \beta _{\lambda } (u _{\lambda 1 }) - \beta _{\lambda } (u _{\lambda 2 }) \} _{\lambda > 0 }$.
Thus
\eqref{Th3-2-1} follows from the limit of \eqref{PRTh-3-2-01} as $\lambda \to 0 $.
\end{proof}

\begin{Rem}
\label{C-L-theory} 
Define a multi-valued operator 
$A _q : L^1 (\Omega ) \to 2 ^{L^1 (\Omega )}$ by
\begin{equation*}
A _q \xi = \mathcal{A} \beta ^{-1} (\xi ) 
=-\nabla \cdot \alpha (\cdot , \nabla  \beta ^{-1} (\xi  (\cdot )) )
\end{equation*}
with domain
\begin{equation*}
D(A _q )
:= \LD 
\xi \in L^{q} (\Omega );~\exists u \in W^{1,p} _0 (\Omega) ~~\text{s.t.}
\begin{matrix}
 ~~\xi (x) \in \beta (u(x))~ ~~\text{a.e.} ~x\in \Omega , \\
~\mathcal{A} u \in L^{q} (\Omega)
\end{matrix}
\RD .
\end{equation*}
Theorem  \ref{Th3-1} implies that $A_q$ fulfills the range condition
\begin{equation*}
D(A _q ) \subset R(\id + \lambda A_{q}) = L^q (\Omega )
\end{equation*}
for any $q \geq p'$
and Theorem  \ref{Th3-2} yields the accretivity of $A_q$.
Therefore we can apply the nonlinear semigroup theory
 and assure that the Cauchy problem
 \begin{equation}
\begin{cases}
~~\DS \frac{d \xi }{dt } + A _q \xi \ni f, \\
~~ \xi (0) = \xi _ 0  ,
\end{cases}
\label{Benilan} 
\end{equation}
which is equivalent to the original problem (P),
possesses a unique integral solution $\xi \in C([0,T] ; L^{1} (\Omega ))$
for any $\xi _0  \in L^1 (\Omega)$ and $f \in L ^{1} (Q)$.
 Furthermore,
 we can see that
 any approximate solution of 
 suitable time-discretization of \eqref{Benilan}
 strongly converges to the integral solution 
 in $L^{\infty} (0,T ; L^1 (\Omega ))$
(see \cite{Be}, \cite{CL}, or Theorem 4.2 of \cite{Bar-1}).
Moreover, by tracing the proof,
we can see that the integral solution satisfies \eqref{MT-03}
if  $f \in \Lip (0,T ; L^1 (\Omega ))$
(remark that it is not trivial that $A _q $ satisfies $L^1$-closedness)
and 
\eqref{external} is not enough to obtain such a time-Lipschitz continuity in abstract setting.
\end{Rem}

\begin{Rem}
By the same procedure as the proof for Theorem \ref{Th3-2},
 we can obtain 
\begin{equation}
\int _{\Omega }  ( \xi _1 (x) - \xi _2 (x) ) _+  dx  \leq \int _{\Omega } ( h _1 (x) - h_2 (x) ) _+ dx,
\label{Th3-2-2} 
\end{equation}
where $(\cdot ) _+ $ denotes the positive part, i.e.,
\begin{equation*}
(s ) _+ :=\begin{cases}
~~s ~~&~~\text{ if } s \geq 0 ,\\
~~0 ~~&~~\text{ if } s < 0 .
\end{cases}
\end{equation*}
Indeed, by testing the difference of \eqref{Ellip-AP} with $h _ i $ by 
$H _{\lambda } ( u_{\lambda 1} - u _{\lambda 2}) $,
we have 
\begin{equation*}
\int _{\{ x \in \Omega ;  u_{\lambda 1}(x) > u _{\lambda 2} (x) \}}
 (\beta _{\lambda } (u_{\lambda 1}(x)) - \beta _{\lambda } (u_{\lambda 2}(x)) ) dx
 \leq 
\int _{\Omega }
(h_1 (x) - h_2(x)) _+ dx  .
\end{equation*}
Since $\beta _\lambda $ is single-valued, this inequality is equivalent to 
\begin{equation*}
\int _{\Omega }
 (\beta _{\lambda } (u_{\lambda 1}(x)) - \beta _{\lambda } (u_{\lambda 2}(x)) )_+ dx
 \leq 
\int _{\Omega }
(h_1 (x) - h_2(x)) _+ dx  .
\end{equation*}
Therefore by lower semi-continuity of $v \mapsto \int _{\Omega} (v ) _+ dx $,
we obtain \eqref{Th3-2-2} by taking the limit as $\lambda \to 0$.
\end{Rem}



\begin{Rem}
We can assure that there exist a pair of initial data $( u_0 , \xi _0 )$
satisfying $\xi _0 (x) \in \beta (u (x))$ and the requirements in Theorem \ref{MTh-01}--\ref{MTh-03}
by solving \eqref{Ellip}with $h $ belonging to $L^{q} (\Omega ) $, $L^{\infty} (\Omega )$ or $BV _0 (\Omega )$.
\end{Rem}


\section{Existence}
Let $N \in \N$ and define $\tau := T / N $.
As the standard approximation of (P),
we consider the following time-discretization problem:
\begin{equation}
\begin{cases}
~~\DS \frac{\xi ^{n+1}_{\tau } (x) - \xi ^{n}_{\tau }(x) }{\tau}
			-\nabla \cdot \alpha (x , \nabla   u^ {n+1} _{\tau } (x) ) = f ^{n} _{\tau}(x) 
			~~&~~x \in \Omega , \\ 
~~\DS \xi ^{n+1}_{\tau } (x) \in \beta (u^{n+1 } _{\tau} (x))
			~~&~~x \in \Omega , \\ 
~~\DS u^{n+1 } _{\tau} (x) =0 
			~~&~~x \in \partial \Omega , 
\end{cases}
\tag{P$_{\tau , n }$}
\label{TDP} 
\end{equation}
where $u ^{0} _{\tau } := u _ 0 $, $\xi ^{0} _{\tau } :=  \xi _0 $, 
and 
$ f ^{n} _{\tau } :=  \frac{1}{ \tau  } \int_{n \tau }^{(n+1) \tau } f (\cdot , s)  ds$.
By using Theorem \ref{Th3-1} with 
$h = \xi ^ n _{\tau } + \tau f ^{n} _{\tau }$,
we can inductively determine
$ u_{\tau } = \{ u ^0 _{\tau } , u ^1 _{\tau } , \ldots , u ^N _{\tau } \} $
and 
$ \xi_{\tau } = \{ \xi ^0 _{\tau } , \xi ^1 _{\tau } , \ldots , \xi ^N _{\tau } \} $ 
satisfying
$u ^n _{\tau } \in W^{1, p} _0 (\Omega ) $, $\mathcal{A} u ^n _{\tau } , \xi ^n _{\tau }
 \in L^{ p'} (\Omega ) \cap L^{ q} (\Omega ) $ for each $n = 1, 2, \ldots , N $, 
and
\begin{equation}
\begin{split}
& \| \xi ^{n+1} _{\tau } \| _{L^{ p'}} \leq \tau \|  f ^n _{\tau } \| _{L^{ p'}} + \| \xi ^{n} _{\tau } \| _{L^{ p'}}
		\leq \tau  \sup _{0\leq t \leq T} \| f( t)  \| _{L^{p' } }+ \| \xi ^{n} _{\tau } \| _{L^{ p'}} , \\
& \| \xi ^{n+1} _{\tau } \| _{L^{ q}} 
		\leq \tau  \sup _{0\leq t \leq T} \| f( t)  \| _{L^{q} }+ \| \xi ^{n} _{\tau } \| _{L^{ q}} , \\
\end{split}
\label{P-01} 
\end{equation}
for $n= 0 ,1, \ldots , N-1$.
Similarly, if $f \in L^{\infty } (0 ,T ; BV_ 0 (\Omega ))$, it holds that
\begin{equation}
 TV _{\Omega } ( \xi ^{n+1} _{\tau }  ) \leq 
\tau  \sup _{0\leq t \leq T} TV _{\Omega } ( f(\cdot , t ) ) 
+ TV _{\Omega } ( \xi ^{n} _{\tau }  ) . 
\label{P-11} 
\end{equation}

By repeating exactly the same procedures as those in \cite{GM} and \cite{R},
we establish a priori  estimates of $u_{\tau}$ and $\xi _{\tau}$.
Multiplying \eqref{TDP} by $u ^{n +1} _{\tau}$,
we have
\begin{equation*}
  \int_{\Omega } j^{\ast} (\xi ^{n+1} _\tau (x)) dx -
\int_{\Omega } j^{\ast} (\xi _0 (x)) dx  +
  c_3 \tau \sum_{m=1}^{n+1}  \| \nabla u^{m} _\tau  \| ^p _{L^p } 
\leq 
 C _3 T \sup _{0\leq t \leq T}\| f(t ) \| ^{p'}_{W^{-1 , p' }}
\end{equation*}
for $n =0, 1,\ldots , N-1$ (recall $n \tau  \leq N \tau = T$).
Here and henceforth,
$c_ 3 , C_3 >0 $ stands for general constants independent of $\tau$ and $N$.
By using
\begin{align*}
&  \int_{\Omega } j^{\ast} (\xi ^{n+1} _\tau (x)) dx -
\int_{\Omega } j^{\ast} (\xi _0 (x)) dx \\
&=
  \int_{\Omega } u ^{n+1} _\tau (x) \xi ^{n+1} _\tau (x) dx -
\int_{\Omega } j(u ^{n+1} _\tau (x)) dx 
-  \int_{\Omega } j^{\ast} (\xi _0 (x)) dx \\
&\geq
 -  \int_{\Omega } j(0) dx -  \int_{\Omega } j^{\ast} (\xi _0 (x)) dx ,
\end{align*}
we obtain
\begin{equation}
\tau \sum_{n=1}^{N}  \| \nabla u^{n} _\tau  \| ^p _{L^p }
\leq C_3 .
\label{Esti-01} 
\end{equation}
Next testing \eqref{TDP} by $(u ^{n +1} _{\tau} -  u ^{n } _{\tau} )$,
we have for $n = 1 , 2, \ldots , N-1$
\begin{align*}
&\int _{\Omega } a (x , \nabla u^{n+1} _\tau (x)) dx
	-\int _{\Omega } a (x , \nabla u _0 (x)) dx  \\
&\leq \sum_{m=0}^{n} \int _{\Omega } f ^m _{\tau}(x)  (u ^{m +1} _{\tau}(x) -  u ^{m } _{\tau}(x) )dx \\
&= \sum_{m=1}^{n} \int _{\Omega } \LC  f ^{m-1} _{\tau}(x) - f ^m _{\tau}(x)\RC    u ^{m } _{\tau}(x) dx 
	+  \int _{\Omega } \LC f ^n _{\tau}(x)  u ^{n +1} _{\tau} (x) - f ^0 _{\tau}(x)  u _0 (x)  \RC  dx \\
&\leq  C_3 \left 
	\| \frac{d f  }{ds} (s) \right \| ^{p'} _{L ^{p'} (0 , T ; W^{-1 ,p'} (\Omega ) )}
	+ \tau  \sum_{m=1}^{N} \|\nabla  u ^{m } _{\tau}\| _{L^p} \\
&\hspace{35mm}  
+\sup _{0\leq t \leq T}\| f (t) \| _{W^{-1, p'}} 
   \LC  \|   \nabla u_0 \| _{L^p} +  \|   \nabla u^{n +1} _{\tau} \| _{L^p}  \RC ,
\end{align*}
which implies
\begin{equation}
\sup _{n = 0 ,1 ,\ldots , N} \| \nabla u^{n} _\tau \|  _{L^p}
 \leq C_3 .
\label{Esti-02} 
\end{equation}
From \eqref{A-a02},
we can deduce $ \| \alpha (\cdot , \nabla u ^n_{\tau }) \| _{L^{p'}} \leq C_3  $ and 
\begin{equation}
\sup _{n = 0 ,1 ,\ldots , N} \| \mathcal{A} u^{n} _\tau \|  _{W ^{-1 ,p '}}
 \leq C_3 .
\label{Esti-03} 
\end{equation}
Immediately, we get 
\begin{equation}
\sup _{n = 0 ,1 ,\ldots , N-1} 
\left \| \frac{\xi ^{n+1} _\tau -  \xi ^{n} _\tau }{\tau}  \right \|  _{W ^{-1 ,p '}}
 \leq C_3 .
\label{Esti-04} 
\end{equation}
Multiplying \eqref{TDP} by $u^{n} _\tau$ and $u^{n+1} _\tau$ again,  
we have
\begin{align*}
&\frac{1}{\tau} \LC \int _{\Omega} j^{\ast} (\xi ^{n} _\tau (x)) dx 
-\int _{\Omega} j^{\ast} (\xi ^{n+1} _\tau (x)) dx 
\RC \\
&~~~~~~~\geq  - \| \mathcal{A} u ^{n+1}  _{\tau } \| _{W^{-1 , p'}} \| \nabla u ^{n} _\tau \| _{L^p}
 -\|  f ^n _{\tau } \| _{W^{-1 , p'}} \| \nabla u ^{n} _\tau \| _{L^p}
\end{align*}
and
\begin{equation*}
\frac{1}{\tau} \LC \int _{\Omega} j^{\ast} (\xi ^{n} _\tau (x)) dx 
-\int _{\Omega} j^{\ast} (\xi ^{n+1} _\tau (x)) dx 
\RC
\leq \|  f ^n _{\tau } \| _{W^{-1 , p'}} \| \nabla u ^{n+1} _\tau \| _{L^p} .
\end{equation*}
From \eqref{Esti-02} and \eqref{Esti-03} it follows that 
\begin{equation}
\sup _{n = 0 ,1 ,\ldots , N-1} 
\LZ  \int _{\Omega} j^{\ast} (\xi ^{n+1} _\tau (x)) dx 
-\int _{\Omega} j^{\ast} (\xi ^{n} _\tau (x)) dx \RZ
 \leq \tau C_3 .
\label{Esti-05} 
\end{equation}

If $\mathcal{A} u _ 0 \in L^{p'} (\Omega )$,
we can define $\xi ^{-1 } _{\tau } \in L^{p'} (\Omega )$ by 
\begin{equation*}
 \xi ^{-1}_{\tau }: =
\xi ^{0}_{\tau }   
			 +\tau \mathcal{A}   u^ {0} _{\tau }   
-\tau f ^{-1} _{\tau}
~~\Leftrightarrow ~~
 \frac{\xi ^{0}_{\tau }  - \xi ^{-1}_{\tau } }{\tau}
			+\mathcal{A}   u^ {0} _{\tau }   = f ^{-1} _{\tau},
\end{equation*}
where $f ^{-1 } _{\tau } \equiv f (\cdot , 0)$.
Then $u ^{0} _{\tau} = u _0 $ and $\xi ^{0} _{\tau} = \xi _0 $
can be regarded as a unique solution to
\eqref{Ellip} with $h = \xi ^{-1}_{\tau }  + \tau f ^{-1 } _{\tau } \in L^{p'} (\Omega )$.
 Theorem \ref{Th3-2} assure that
\begin{align*}
\| \xi ^{n+1} _{\tau} -\xi ^{n} _{\tau}  \| _{L^1} 
&\leq 
\| \xi ^{n} _{\tau} -\xi ^{n-1} _{\tau}  \| _{L^1}
+ \tau \|  f ^{n } _{\tau } -  f ^{n -1 } _{\tau } \|  _{L^1} \\
&\leq 
\| \xi ^{n} _{\tau} -\xi ^{n-1} _{\tau}  \| _{L^1}
+ \tau \int_{(n-1)\tau }^{n\tau } \left  \|  \frac{f (t + \tau)  -f (t ) }{\tau} 
		\right \|  _{L^1} dt
\end{align*}
 for any $n = 0, 1 , \ldots , N -1 $.
Therefore, we obtain
\begin{equation}
\sup _{n= 0, 1, \ldots , N -1 } 
\left \| \frac{\xi ^{n+1} _{\tau} -\xi ^{n} _{\tau}}{\tau }  \right  \| _{L^1} 
\leq 
\left \| f^{-1} _{\tau} - \mathcal{A} u ^ 0 _{\tau}  \right  \| _{L^1} 
+  \int_{0}^{T  } \left  \|  \frac{d f  }{dt } (t)
		\right \|  _{L^1} dt \leq C_3 .
\label{Esti-06}
\end{equation}
We here put
$f ( \cdot ,  t ) \equiv f (\cdot , 0)  $ for $t <0  $.

We now discuss the convergence as $\tau \to 0$. 
If there is no confusion,
a subsequence may be denoted again by the same symbol as the original sequence.
For a given sequence
$w _{\tau }  := \{ w ^{0} _{\tau} ,w ^{1} _{\tau} ,\ldots , w ^{N} _{\tau} \}$,
we set
\begin{align*}
&\Pi _{\tau } w _{\tau } (t):= \begin{cases}
		~~ w^{n+1} _{\tau}~~&~~ \text{ if } t\in (n\tau , (n+1)\tau ] , \\
			~~ w^0_{\tau} ~~&~~ \text{ if } t=0
			\end{cases} \\
&\Lambda  _{\tau}w _{\tau } (t) :=  
\begin{cases}
		\DS ~~ \frac{w ^{n+1} _{\tau} -  w ^{n} _{\tau} }{\tau } (t - n \tau) + w ^{n} _{\tau} 
				~~&~~ \text{ if } t \in ( n\tau , (n+1) \tau ] , \\
			~~ w^0_{\tau} ~~&~~ \text{ if } t=0.
			\end{cases} 
\end{align*}
Then  \eqref{Esti-01}--\eqref{Esti-06} imply
\begin{align}
&\sup _{0\leq t \leq T } \| \Pi _{\tau} u _{\tau } \| _{W^{1,p}_0} 
+\sup _{0\leq t \leq T } \| \mathcal{A} \Pi  _{\tau} u _{\tau } \| _{W^{-1 , p' }} \leq C_3,
\label{P-011}  \\
&	 \sup _{0\leq t \leq T } \| \partial _t \Lambda   _{\tau} \xi  _{\tau } \| _{W^{-1 , p' }} \leq C_3 ,
\label{P-012} \\
&\sup _{0\leq t \leq T } \LZ \partial _t  \Lambda   _{\tau}  \int _{\Omega } j^{\ast } (\xi _{\tau} (x))dx \RZ 
		\leq C_3  ,
\label{P-013} 	
\end{align}
and
\begin{equation}
\begin{split}
&\sup _{0\leq t \leq T } \| \Lambda  _{\tau} \xi _{\tau } \| _{L^{p'}} 
		\leq  T\sup _{0\leq t \leq T }   \| f( t)  \| _{L^{p' } }  + \| \xi _0 \| _{L^{ p'}} ,\\
&\sup _{0\leq t \leq T } \| \Lambda  _{\tau} \xi _{\tau } \| _{L^{q}} 
		\leq   T\sup _{0\leq t \leq T } \| f( t)  \| _{L^{q } }  + \| \xi _0 \| _{L^{ q}} .
\end{split}
\label{P-02} 
\end{equation}
When $\mathcal{A} u _0 \in L^{p'} (\Omega )$, we can derive from \eqref{Esti-06}
\begin{equation}
 \| \Lambda  _{\tau} \xi _{\tau } (t) - \Lambda  _{\tau} \xi _{\tau } (s)  \| _{L^{1}} 
		\leq C_3| t - s | \hspace{5mm} \forall s ,t \in [0,T]
\label{P-03} 
\end{equation}
and 
if $\xi _0 \in BV (\Omega )$ and $f \in L^{\infty} (0,T ;BV_0 (\Omega ) ) $, we obtain 
\begin{equation}
\sup _{0\leq t \leq T } TV _{\Omega } (\Pi _{\tau } \xi_{\tau} ( t) )
	\leq T \sup _{0\leq t \leq T } TV _{\Omega } (f (t) ) 
			+ TV _{\Omega } (\xi _0 ).
\label{P-13} 
\end{equation}
Hence 
we can extract a suitable subsequence 
such that  
we obtain as $\tau \to 0 $
\begin{align*}
\Pi _{\tau} u _{\tau }\to \exists u
		&~~\text{ *-weakly in }L^{\infty } (0,T; W^{1,p} (\Omega )),\\
\Lambda  _{\tau} \xi _{\tau }\to \exists \xi
			&~~\text{ strongly in }C ([0,T]; W ^{-1 , p' } (\Omega )),\\
~
			&~~\text{ *-weakly in }L^{\infty } (0,T; L ^{p'} (\Omega ))
					\cap L^{\infty } (0,T; L ^{q} (\Omega )),\\
\partial _t \Lambda  _{\tau} \xi _{\tau } \to \partial _t \xi  
			&~~\text{ *-weakly in }L^{\infty } (0,T; W ^{-1,p'} (\Omega )), \\
\mathcal{A} \Pi _{\tau} u _{\tau  } \to  \exists \eta  
			&~~\text{ *-weakly in }L^{\infty } (0,T; W ^{-1,p'} (\Omega )),
\end{align*}
as $\tau \to 0$.
Clearly, the limit inferior of \eqref{P-02} as $\tau \to 0$ yields 
\eqref{MT-02}.
If $\mathcal{A} u _0 \in L^{p'} (\Omega )$,
we obtain \eqref{MT-03} by applying Dunford-Pettis's theorem to \eqref{P-03}.
Moreover, if $\xi _ 0 \in BV   (\Omega )$ and $f \in L^{\infty} (0,T ; BV_0 (\Omega) )$,
 \eqref{MT-13} can be derived  from \eqref{P-13}.

According to B\'{e}nilan \cite{Be}
(see also Theorem 4.2 of Barbu \cite{Bar-1}),
the sequence $\{ \Pi _{\tau} \xi _{\tau}\} _{\tau > 0}$
 strongly converges to the integral solution 
 in $L^{\infty} (0,T ; L^1 (\Omega ))$.
 Evidently, the limit coincides with $\xi$.
 Furthermore, since 
\begin{align*}
&\| \Pi _{\tau} \xi _{\tau} (t) - \xi (t) \| ^r _{L^r (\Omega )} \\
& \leq \LC \int _{\Omega } |\Pi _{\tau} \xi _{\tau}(x,t)  - \xi (x,t)| dx \RC ^{1 - 1 / s }
 \LC \int _{\Omega  } |\Pi _{\tau} \xi _{\tau}(x,t)  - \xi (x,t)| ^{p'} dx \RC ^{ 1 / s }
\end{align*}
where $s := 1 / (r -1) (p -1 ) $ belongs to $(1, \infty )$ if $r \in (1 ,p')$,
we have
\begin{equation}
 \Pi _{\tau} \xi _{\tau}  \to  \xi
 ~~~~~\text{ strongly in } L^{\infty} (0,T ; L^{r} (\Omega ) )
 ~~~\forall r \in (1, p ').
\label{Conv-01} 
\end{equation}

Now we  show that
$\eta = \mathcal{A} u $  and $\xi (x , t ) \in \beta (u (x,t))$ for a.e. $ (x, t ) \in Q$
by reprising the argument of \cite{GM}.
By
\eqref{P-012}, \eqref{P-02}
and the compactness of $L^{p'} (\Omega ) \hookrightarrow W^{-1 , p' } (\Omega )$,
Ascoli's theorem is applicable to $\{ \Lambda _\tau \xi_{\tau } \} _{\tau >0 }$ 
and
there is a subsequence 
which strongly converges 
(to $\xi $, obviously) in $C([0,T]; W^{-1, p'} (\Omega ))$.
Remark that
$\Pi _{\tau } \xi _{\tau } -\Lambda  _{\tau } \xi _{\tau } \to 0 $,
i.e.,
$\Pi _{\tau } \xi _{\tau }  \to \xi $ holds
 strongly in 
$L^{\infty } (0,T ; W^{-1, p'} (\Omega ))$
by \eqref{Esti-04}.
Therefore we  obtain
$\LA \Pi _{\tau } \xi _{\tau} , \Pi _{\tau } u _{\tau} \RA _{X} \to
\LA  \xi  ,  u  \RA _{X} $
with $X = L^p (0, T ; W^{1 , p } _0 (\Omega ))$, which implies
\begin{equation*}
\int_{Q}  \Pi _{\tau } \xi _{\tau} (x,t) \Pi _{\tau } u _{\tau} (x,t) dx dt 
\to
\int_{Q}   \xi  (x,t)  u  (x,t) dx dt .
\end{equation*}
Thus Lemma 1.2 in Br\'{e}zis--Crandall--Pazy \cite{BCP}
leads to $\xi \in \widetilde{\beta } (u) $ in $L^{\infty } (0,T ; L^{p'} (\Omega ))$.

By \eqref{P-013},  
Ascoli--Arzela's theorem assures that 
$\{ \Lambda _{\tau} \int _{\Omega} j^{\ast} (\xi _{\tau }) \} _{\tau >0 }$
possesses a subsequence which converges strongly in $C([0,T]; \R)$.
Let $\Theta $ be its limit.
Since $\Pi _{\tau} \xi _{\tau} (T ) \to \xi (T)$
weakly in $L^{p' } (\Omega)$,
we have
$\Theta (T) \geq \int _{\Omega } j ^{\ast} (\xi (T) )dx$
by lower semi-continuity of $\int _{\Omega} j^{\ast} (\cdot ) dx$.
Then by \eqref{L-AL} and
$\Theta (0) =  \int _{\Omega } j^{\ast} (\xi _0 (x)) dx$,
we have
\begin{equation}
\begin{split}
\int_{0}^{T} \LA \partial _t \xi (t) , u (t) \RA  _{W^{1,p} _0} dt 
 &= \int _{\Omega } j^{\ast} (\xi (x, T)) dx - \int _{\Omega } j^{\ast} (\xi _0 (x)) dx, \\
 &\leq \Theta (T) - \Theta (0).
\end{split}
\label{P-05} 
\end{equation}
Multiplying \eqref{TDP} by $u^{n +1} _{\tau }$
and integrating over $(n\tau , (n+1) \tau)$,
we get
\begin{align*}
&\int_{\Omega} j^{\ast} (\xi ^{n+1} _{\tau }(x)) dx -\int_{\Omega} j^{\ast} (\xi ^{n} _{\tau }(x)) dx 
+ \int_{n\tau}^{(n+1)\tau } \LA \mathcal{A} \Pi _{\tau} u_{\tau} (t) , \Pi _{\tau} u_{\tau} (t)  \RA _{W^{1, p} _0} dt\\
&\leq 
\int_{n\tau}^{(n+1)\tau } \int _{\Omega} f  (x, t)  \Pi _{\tau} u_{\tau} (x, t)dxdt
\end{align*}
Calculating $\sum_{n=0}^{N-1} $ and taking its limit as $\tau \to 0$,
we can derive from \eqref{P-05} 
\begin{align*}
&\limsup _{\tau \to 0}
 \int_{0}^{T } \LA \mathcal{A} \Pi _{\tau} u_{\tau} (t) , \Pi _{\tau} u_{\tau} (t)  \RA _{W^{1, p} _0} dt\\
&\leq 
\int_{0}^{T} \int _{\Omega} f  (x, t)   u (x, t)dxdt
-\Theta (T) + \Theta (0) \\
&\leq 
\int_{0}^{T} \int _{\Omega} f  (x, t)   u (x, t)dxdt
- \int _{\Omega } j^{\ast} (\xi (x, T)) dx + \int _{\Omega } j^{\ast} (\xi _0 (x)) dx\\
&=
\int_{0}^{T} \int _{\Omega} 
\LA f (t) - \partial _t \xi (t) ,  u(t)  \RA _{W^{1, p} _0} dt
=
\int_{0}^{T} \int _{\Omega} 
\LA \eta (t) ,  u (t)  \RA _{W^{1, p} _0} dt.
\end{align*}
By virtue of Lemma 1.2 of  \cite{BCP},
we can assure that $\eta = \mathcal{A} u$, whence it follows Theorem \ref{MTh-01}, \ref{MTh-02}, and \ref{MTh-12}.

\begin{Rem}
By the same way as \cite{C}, it can be shown that
\begin{equation}
\Pi _{\tau } u _{\tau} \to u 
 \hspace{5mm}\text{ strongly in } L^{p} (0,T ; W^{1, p} _0  (\Omega ) ).
\label{Conv-02} 
\end{equation}
Indeed, multiplying \eqref{TDP} by $u ^{n+1} _{\tau} - u(t) $ with $t \in (n\tau , (n+1) \tau )$,
we have
\begin{align*}
&\int _{\Omega } f ^n_{\tau } (x,t )  (u ^{n+1} _{\tau} - u(x, t) ) dx \\
&\geq 
\frac{1}{\tau} \LC \int _{\Omega } j^{\ast} (\xi ^{n+1} _{\tau }(x)) dx 
	 - \int _{\Omega } j^{\ast} (\xi ^{n} _{\tau }(x)) dx  	\RC
-\LA \frac{\xi ^{n+1} _{\tau } - \xi ^{n} _{\tau } }{\tau } ,  u( t)  \RA _{W^{1,p} _0} \\
&\hspace{5mm}+  
\LA  \mathcal{A} u ^{n+1} _{\tau} - \mathcal{A} u ,    u ^{n+1} _{\tau} - u( t)  \RA _{W^{1,p} _0}
+
\LA   \mathcal{A} u ,    u ^{n+1} _{\tau} - u( t)  \RA _{W^{1,p} _0},
\end{align*}
which yields
\begin{align*}
&\int _{Q } \Pi _{\tau} f ^n_{\tau } (x,t )  ( \Pi _{\tau} u  _{\tau}(x, t) - u(x, t) ) dx \\
&\geq 
 \LC \int _{\Omega } j^{\ast} (\xi ^{N} _{\tau }(x)) dx 
	 - \int _{\Omega } j^{\ast} (\xi  _{0}(x)) dx  	\RC
+\int_{0}^{T}  \LA \partial _t \Lambda _{\tau } \xi _{\tau } (t) ,  u( t)  \RA _{W^{1,p} _0} dt \\
&+  
\int_{0}^{T}
\LC  \LA  \mathcal{A} \Pi  _{\tau} u _{\tau} (t) - \mathcal{A} u (t),   
		\Pi _{\tau} u _{\tau} (t) - u( t)  \RA _{W^{1,p} _0} 
+
\LA   \mathcal{A} u (t),  \Pi _{\tau}  u _{\tau}(t) - u( t)  \RA _{W^{1,p} _0} \RC dt.
\end{align*}
Taking the limit as $\tau \to 0$ and using \eqref{P-05},
we obtain
\begin{align*}
0&\geq 
 \Theta (T) 
	 -\Theta (0)
+\int_{0}^{T}  \LA \partial _t \xi  (t) ,  u( t)  \RA _{W^{1,p} _0} dt \\
&\hspace{5mm} + 
\limsup _{\tau \to 0} 
\int_{0}^{T} \LA \mathcal{A} \Pi  _{\tau} u _{\tau} (t) - \mathcal{A} u (t),   
		\Pi _{\tau} u _{\tau} (t) - u( t)  \RA _{W^{1,p} _0} dt\\
&\geq 
 \int _{\Omega } j^{\ast} (\xi (x, T)) dx - \int _{\Omega } j^{\ast} (\xi _0 (x)) dx
+\int_{0}^{T}  \LA \partial _t  \xi  (t) ,  u( t)  \RA _{W^{1,p} _0} dt \\
&\hspace{5mm}+ 
\limsup _{\tau \to 0} 
\int_{0}^{T} \LA \mathcal{A} \Pi  _{\tau} u _{\tau} (t) - \mathcal{A} u (t),   
		\Pi _{\tau} u _{\tau} (t) - u( t)  \RA _{W^{1,p} _0} dt \\
&= 
\limsup _{\tau \to 0} 
\int_{0}^{T} \LA \mathcal{A} \Pi  _{\tau} u _{\tau} (t) - \mathcal{A} u (t),   
		\Pi _{\tau} u _{\tau} (t) - u( t)  \RA _{W^{1,p} _0} dt .		
\end{align*}
Hence \eqref{Conv-02} follows from \eqref{A-a03}.
\end{Rem}

In order to prove Theorem \ref{MTh-03}, we use Lemma 4 of Carrillo \cite{C}:
let 
$(u, \xi)$ fulfill \eqref{Regu} 
and $\gamma : \R \to \R$ be non-decreasing Lipschitz continuous.
If
$\zeta \in C ^{1 } (\overline{Q })$ satisfies $\gamma (u) \zeta \in L^{\infty} (0, T ; W^{1,p } _0 (\Omega ))$,
then 
\begin{equation}
\begin{split}
&\int _{\Omega } \Gamma  (\xi (x,t ) ) \zeta (x ,t ) dx  
 - \int _{\Omega } \Gamma  (\xi _0 (x ) ) \zeta (x , 0 ) dx \\ 
&~~~
=\int_{0}^{t}  \LA \partial _t \xi (t), \gamma (u(t)) \zeta (t)  \RA _{W^{1,p} _0 } dt
 + \int_{0}^{t} \int _{\Omega } \Gamma (\xi (x,t )) \partial _t \zeta (x,t) dx dt 
\end{split}
\label{Lem-Cal} 
\end{equation}
holds for a.e. $t \in (0,T)$, where
\begin{equation}
\Gamma (s) :=
\begin{cases}
~~\DS \int_{0}^{s} \gamma ( \LC \beta ^{-1} \RC ^{\circ} ( \sigma )) d\sigma     ~~&~~
		\text{ if } s \in D(\gamma \circ  \beta ^{-1} ), \\
~~ +\infty ~~&~~
		\text{ otherwise}.
\end{cases}
\label{Lem-Cal-Gam} 
\end{equation}
We use this formula with $\zeta \equiv 1$ and 
$ \gamma = K ^{p(r-1) +2 }_M $.
Since 
\begin{equation*}
0 \leq \Gamma (\xi (x,t )) \leq \xi (x,t) K ^{p(r-1) +2} _M (u (x,t )),
\end{equation*}
we can obtain
\begin{align*}
c r ^{-p } \| u \| ^{pr}_{L^{pr \mu } (Q)}
& \leq
\| f \| _{L^{\infty } (Q)} \| u  \| ^{p (r-1) +1} _{ L ^{p (r-1) +1} (Q)}
+ \int _{\Omega } \Gamma  (\xi _ 0  (x)) dx \\
& \leq
\| f \| _{L^{\infty } (Q)} \| u  \| ^{p (r-1) +1} _{ L ^{p (r-1) +1} (Q)}
+\| \xi _0  \| _{L^{\infty } (\Omega )}  
\| u _0   \| ^{p (r-1) +1} _{ L ^{p (r-1) +1} (\Omega )} ,
\end{align*}
with some $\mu > 1 $. Therefore by Morse's iteration, 
we can deduce
\begin{equation*}
\| u \| _{L ^{\infty } (Q)} \leq C =
C (\| f \| _{L^{\infty } (Q)} , \| u _0   \| _{L^{\infty} (\Omega )} ,\| \xi _0   \| _{L^{\infty} (\Omega )} , |\Omega |,
|Q| ).
\end{equation*}


\section{Uniqueness} 
Throughout this section,  we assume that
\begin{equation}
\alpha (x , z ) = \alpha ( z ) 
\label{Unique} 
\end{equation}
and initial data and external force satisfy
\begin{equation}
\begin{split}
& f \in W^{1, p' } (0,T ; W^{-1, p'} (\Omega )) \cap L^{\infty} (0,T; L^{p'} (\Omega ) ) ,\\
& u _{0} \in W^{1,p} _0 (\Omega ),~~\xi_0  \in L^{p'} (\Omega ) ,
~~\xi _0(x) \in \beta (u_0 (x))~\text{ for a.e. }x\in \Omega .
\end{split}
\label{Assume2} 
\end{equation}
To discuss the uniqueness of solution, we rewrite our problem.
Let $v := u + \xi $, where $(u, \xi )$ is a solution to (P).
Since $\beta $ and $\beta ^{-1}$ are maximal monotone,
$g := (\id + \beta ^{-1}) ^{-1} $ and $b := (\id + \beta ) ^{-1}$
are Lipschitz continuous 
and satisfy $ g(v) = \xi $, $b (v) = u$, and $g \circ b ^{-1} = \beta $, $b \circ g ^{-1} = \beta ^{-1} $.
In this manner,
we obtain the following, which is equivalent to the 
 original initial boundary value problem of (P):
\begin{equation}
\begin{cases}
~ \partial _t g(v (x,t)) - \nabla \cdot  \alpha  ( \nabla b( v (x, t) )) = f(x,t) 
		~&(x,t) \in Q , \\
~ b (u (x,t)) = 0 
		~~~&(x,t) \in \partial \Omega \times  (0,T), \\
~ g(v (x,0)) = \xi _0(x) 
		~~~&x \in \Omega .
\end{cases}
\label{E-001} 
\end{equation}
Theorem \ref{MTh-01} assures the existence of a solution to \eqref{E-001} in the following sense:
\begin{De}
\label{E-weak} 
A function $v \in L^{1} (Q)$  is said to be a weak solution to \eqref{E-001}
if
\begin{equation}
\begin{split}
&b(v) \in L^{\infty } (0,T ; W^{1,p } _0 (\Omega )), \\
&g(v) \in W^{1 ,\infty } (0,T ; W^{-1,p '}  (\Omega )) \cap L^{\infty } (0,T ; L^{p ' }  (\Omega )), \\
&\partial _t g(v) - \nabla \cdot \alpha (\nabla   b( v )  ) = f ~
\text{ in }W^{-1, p'} (\Omega )~~~\text{ for a.e. } t  \in (0,T) , \\
& g(v (\cdot ,0)) = \xi _0(\cdot ) .
\end{split}
\label{E-Regu} 
\end{equation}
\end{De}
\noindent We here introduce the definition of entropy solution 
\begin{De}
\label{E-entro} 
A weak solution $v$ to \eqref{E-001} is said to be a entropy solution 
if
\begin{align}
&\int _{Q} H^{\circ } (v  - s) 
\LD \alpha ( \nabla b (v )) \cdot \nabla \zeta  - (g (v ) - g(s)) \partial _t \zeta - f \zeta  \RD dx dt
		\label{E-Def1} \\
&\hspace{60mm}  - \int _{\Omega } (\xi _0 - g (s)) _+ \zeta (\cdot , 0) dx
		\leq 0,
			\notag \\
&\int _{Q} H^{\circ} ( - s - v ) 
\LD \alpha ( \nabla b (v )) \cdot \nabla \zeta  - (g (v ) - g(-s)) \partial _t \zeta  
		- f  \zeta  \RD dx dt 
		\label{E-Def2} \\
&\hspace{60mm}+ \int _{\Omega } ( g (-s) - \xi _0 ) _+ \zeta (\cdot , 0) dx
		\geq 0,
			\notag 
\end{align}
for any $s\in \R$ and $ \zeta = \zeta (x,t) $ such that $\zeta \geq 0$ and  either of
\begin{equation}
\text{i)  } s \geq 0 ~~\text{ and }~~\zeta \in \mathscr{D} ([ 0, T ) \times \overline{\Omega } ),
~~~~\hspace{5mm} 
\text{ii)  } s \in \R ~~\text{ and }~~\zeta \in \mathscr{D} ([ 0, T ) \times \Omega  ).
\label{Alter} 
\end{equation}
\end{De}

Main assertion of this section is  that
the solution constructed in the previous section
meets the requirements of entropy solution.
To this end, we  return to the elliptic problem:
\begin{Le}
\label{E-E-Lem} 
Assume (H.$\alpha $), (H.$\beta $), \eqref{Unique}, and $h \in L^{p'} (\Omega )$.
Let $w \in L^1 (\Omega )$ be a solution to
\begin{equation}
\begin{cases}
~~ g (w(x))  -\nabla \cdot \alpha ( \nabla b(w(x)) ) = h(x)~~~~&~~x \in \Omega ,\\
~~ b(w(x)) = 0   ~~~~&~~x \in \partial \Omega ,
\end{cases}
\label{E-Ellip} 
\end{equation}
such that $g (w) ,  \mathcal{A} b (w) \in L^{p'} (\Omega )$ and $b(w) \in W^{1 , p} _0 (\Omega )$.
Then $w$ is a entropy solution to \eqref{E-Ellip},
i.e., it holds that
\begin{equation}
\int _{\Omega } H^{\circ} (w -s ) 
\LD (g (w ) - h  )\zeta  
+  \alpha ( \nabla b(w ) )\cdot \nabla \zeta  \RD dx  \leq 0
\label{E-Lem001} 
\end{equation}
and 
\begin{equation}
\int _{\Omega } H^{\circ} (- w  -s ) 
\LD (g (w  ) - h )\zeta 
+  \alpha ( \nabla b(w )) \cdot  \nabla \zeta  \RD dx  \geq 0
\label{E-Lem002} 
\end{equation}
for every 
$(s , \zeta )$  satisfying $\zeta \geq 0$ and either of 
\begin{equation}
\text{i)  } s \geq 0 ~~\text{ and }~~\zeta \in \mathscr{D} ( \overline{\Omega }),
~~~~\hspace{5mm} 
\text{ii)  } s \in \R ~~\text{ and }~~\zeta \in  \mathscr{D} ( \Omega ).
\label{Alter2} 
\end{equation}
\end{Le}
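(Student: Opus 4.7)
The plan is to adapt the truncation argument of Carrillo \cite{C} to the elliptic equation \eqref{E-Ellip}. Since $g(w) - \nabla\cdot\alpha(\nabla b(w)) = h$ holds in $L^{p'}(\Omega)$ and $b(w) \in W^{1,p}_0(\Omega)$, any $\phi \in W^{1,p}_0(\Omega)\cap L^\infty(\Omega)$ may be used as a test function. For \eqref{E-Lem001} I would take
\begin{equation*}
\phi_\lambda := H_\lambda(b(w) - b(s))\,\zeta, \qquad \lambda > 0 ,
\end{equation*}
where $H_\lambda$ is the Yosida regularization of the Heaviside graph. This is admissible under either alternative in \eqref{Alter2}: in case ii) $\zeta$ is compactly supported in $\Omega$, while in case i) $s \geq 0$ forces $b(s) \geq 0$ (since $b$ is non-decreasing and $b(0) = 0$ by $0 \in \beta(0)$), so on $\partial\Omega$ we have $b(w) - b(s) = -b(s) \leq 0$ and $H_\lambda$ vanishes there.

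Expanding the gradient of $\phi_\lambda$ and using that $\alpha(\nabla b(w))\cdot \nabla b(w) \geq 0$ (from \eqref{A-a03} with $z_2 = 0$ together with $\alpha(0)=0$), combined with $H_\lambda' \geq 0$ and $\zeta \geq 0$, yields after dropping the non-negative interior term
\begin{equation*}
\int_\Omega H_\lambda(b(w) - b(s))\big\{(g(w) - h)\,\zeta + \alpha(\nabla b(w))\cdot \nabla \zeta\big\}\,dx \leq 0 .
\end{equation*}
Sending $\lambda \to 0$ by dominated convergence gives the same inequality with $H_\lambda$ replaced by $H^\circ(b(w) - b(s))$.

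The main obstacle, and the technical heart of the proof, is to upgrade $H^\circ(b(w) - b(s))$ to the required $H^\circ(w - s)$. Since $b$ is merely non-decreasing, these two step functions disagree precisely on
\begin{equation*}
E := \{w > s\} \cap \{b(w) = b(s)\} ,
\end{equation*}
the set where the multivalued jump of $\beta$ at $u := b(w) = b(s)$ allows $\xi := g(w) > g(s)$. On $E$, Stampacchia's theorem gives $\nabla u = 0$ almost everywhere, so by $\alpha(0) = 0$ the $\nabla\zeta$ contribution from $E$ vanishes. To dispose of the residual $(g(w) - h)\,\zeta$ term on $E$, I would invoke the locality property
\begin{equation*}
-\nabla \cdot \alpha(\nabla u) = 0 \quad\text{a.e.\ on } \{u = b(s)\} ,
\end{equation*}
which, combined with the equation, forces $g(w) = h$ almost everywhere on $\{u = b(s)\} \supset E$ and annihilates the residual contribution. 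One concrete route to this locality is to test \eqref{E-Ellip} with $T_\varepsilon(u - b(s))\,\zeta$, where $T_\varepsilon$ is a smooth odd truncation at height $\varepsilon$, and pass $\varepsilon \to 0$, exploiting $\alpha(\nabla u)\cdot \nabla T_\varepsilon(u - b(s)) = T_\varepsilon'(u - b(s))\,\alpha(\nabla u)\cdot \nabla u$ to concentrate the identity on the level set.

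Inequality \eqref{E-Lem002} follows by the symmetric choice $\phi_\lambda := -H_\lambda(b(-s) - b(w))\,\zeta$: the monotonicity argument proceeds identically (the two minus signs combine so the interior term is again non-negative), in case i) boundary compatibility holds because $-s \leq 0$ gives $b(-s) \leq 0 = b(w)|_{\partial\Omega}$, and the very same locality step bridges $H^\circ(b(-s) - b(w))$ with $H^\circ(-s - w)$. Thus everything reduces to the locality of $\mathcal{A} u$ on level sets of $u$; the remaining parts are a routine truncation-and-pass-to-limit calculation.
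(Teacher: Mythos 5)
Your first step (testing with $H_{\lambda }(b(w)-b(s))\zeta $, using \eqref{A-a03} with $\alpha (0)=0$ to drop the non-negative term, and letting $\lambda \to 0$) is exactly the paper's opening move, and your boundary-admissibility remarks for the two alternatives in \eqref{Alter2} are fine. The gap is in the step you yourself call the technical heart: upgrading $H^{\circ }(b(w)-b(s))$ to $H^{\circ }(w-s)$. On the set $\{w>s\}\cap \{b(w)=b(s)\}$ the Stampacchia argument does kill the $\alpha (\nabla b(w))\cdot \nabla \zeta $ contribution, but the residual term $\int (g(w)-h)\zeta $ is disposed of only by your asserted locality property $-\nabla \cdot \alpha (\nabla u)=0$ a.e.\ on $\{u=b(s)\}$, and this is not a fact you can simply invoke at the stated generality: it is classical for $p=2$ (where $\mathcal{A}u\in L^{2}$ gives $u\in H^{2}_{\rm loc}$ and one applies Stampacchia to the components of $\nabla u$), but for general $\alpha $ and $p\neq 2$ one only knows $\operatorname{div}\alpha (\nabla u)\in L^{p'}$, the field $\alpha (\nabla u)$ need not be weakly differentiable, and no second-order regularity is available. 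Your ``concrete route'' does not repair this: testing with $T_{\varepsilon }(u-b(s))\zeta $ (or its normalized version) produces the term $\frac{1}{\varepsilon }\int _{\{|u-b(s)|<\varepsilon \}}\alpha (\nabla u)\cdot \nabla u\,\zeta $, which concentrates on a shell around the level set, has no reason to vanish as $\varepsilon \to 0$, and is precisely the uncontrolled ``entropy defect'' that Carrillo's method is designed to carry along (cf.\ the limit terms retained in Lemma \ref{E-Lem-Single}) rather than to annihilate; moreover such test functions converge to functions vanishing on $\{u=b(s)\}$, so they never isolate $\int _{\{u=b(s)\}}\mathcal{A}u\,\zeta $ in the first place.

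The paper avoids this issue entirely. It first observes that if $b(s)$ is not a value at which $\beta $ is multivalued, then $H_{0}(w-s)=H_{0}(b(w)-b(s))$ a.e.\ and the first step already gives \eqref{E-Lem001}; in particular the lemma holds for single-valued nonlinearities. For general $\beta $ it then regularizes the nonlinearity, replacing $b$ by $b_{\varepsilon }=\varepsilon \id +b$ (so that $g\circ b_{\varepsilon }^{-1}$ is single-valued) and $h$ by $h_{n}\in \mathscr{D}(\Omega )$, solves \eqref{E-Ap01} by Theorem \ref{Th3-1}, and uses the $L^{\infty }$ and $BV$ bounds of Remark \ref{Estimate-Ellip} — this is where the hypotheses \eqref{Unique} and the smoothness of $h_{n}$ are really needed — to get strong compactness. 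The entropy inequality for the approximations passes to the limit via a $\ast $-weak limit $\chi _{s}\in H(w_{n}-s)$ of $H^{\circ }(w_{\varepsilon }-s)$, and $H^{\circ }(w_{n}-s)$ is recovered by the monotone substitution $\sigma _{i}\searrow s$; a final limit $n\to \infty $, using Theorem \ref{Th3-2}, gives \eqref{E-Lem001} for the solution of \eqref{E-Ellip}. Unless you can supply an actual proof of the level-set locality of $\mathcal{A}$ for all $p\in (1,\infty )$ under only $\mathcal{A}b(w)\in L^{p'}(\Omega )$, your argument does not close, and some approximation scheme of the paper's type (or retention of the defect term as in Lemma \ref{E-Lem-Single}) is required.
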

\begin{proof}
Remark that 
 if $w$ is a solution to \eqref{E-Ellip},
then $-w$ becomes a solution
    to \eqref{E-Ellip} with 
$g $ replaced by 
$\hat{g} (\sigma ) := - g (- \sigma )$,
$b $ by 
$\hat{b} (\sigma ) := - b (- \sigma )$
(namely $\beta  $ by 
$\hat{\beta } (\sigma ) := - \beta (- \sigma )$, which is still maximal monotone),
$\alpha  $ by 
$\hat{\alpha } ( z ) := -\alpha  ( - z )$,
and $h $ by $- h$.
Hence 
it is sufficient to prove \eqref{E-Lem001}.

Multiplying
\eqref{E-Ellip} by $H_{\lambda } (b (w) - b (s)) \zeta \in W^{1,p} _0 (\Omega )$ 
with $(s, \zeta )$ satisfying i) or ii) in Lemma \ref{E-E-Lem}
and letting $\lambda \to +0$,
we get 
\begin{equation*}
\int_{\Omega } H^{\circ } (b (w) -b (s))
\LD  (g (w) - h ) \zeta + \alpha (\nabla b (w) ) \cdot \nabla \zeta \RD  dx \leq 0 .
\end{equation*}
Here we assume that $b (s) \not \in E$, where 
\begin{equation}
E :=  \{  s \in R (b);~~b ^{-1} (s) \text{ is multi-valued} \} 
=  \{  s \in D (\beta );~~\beta (s) \text{ is multi-valued} \} .
\label{null-set} 
\end{equation}
 Since $b (\sigma ) =b(s)$ is attained only by $\sigma = s $,
 we have  $H_{0} (w (x,t ) - s) = H_0 (b(w (x,t )) - b(s))$ for a.e. $(x, t) \in Q$,
which immediately leads to \eqref{E-Lem001}.
Especially, if $\beta $ is single-valued, 
$E$ is empty and \eqref{E-Lem001} holds for every $s $.

To prove the general case, we consider the following approximate problem:
\begin{equation}
\begin{cases}
~~ g \circ b ^{-1}_{\varepsilon }  (u _{\varepsilon }(x)) 
		-\nabla \cdot \alpha ( \nabla u_{\varepsilon } (x) ) = h _n (x)~~~~&~~x \in \Omega ,\\
~~ u_{\varepsilon } (x) = 0   ~~~~&~~x \in \partial \Omega ,
\end{cases}
\label{E-Ap01} 
\end{equation}
where $b _{\varepsilon } = \varepsilon  \id + b $ with $\varepsilon >0 $
and $\{ h_ n \} _{n \in \N }$ is a sequence in $\mathscr{D} (\Omega )$ converging to $h$ in $L^{p'} (\Omega )$.
By  Theorem \ref{Th3-1},
there is a unique solution 
$u_{\varepsilon }  \in W^{1,p} _0 (\Omega )$ such that 
$\xi _{\varepsilon } : = g \circ b ^{-1}_{\varepsilon }  (u _{\varepsilon }) , 
		\nabla \cdot \alpha ( \nabla u _{\varepsilon })  \in L^{p' } (\Omega )$.
Moreover,
by a priori estimates and Remark \ref{Estimate-Ellip} (use here \eqref{Unique}),
we have 
\begin{equation*}
\| \xi _{\varepsilon } \| _{L^{\infty }} ,~
\| \xi _{\varepsilon } \| _{W^{1, 1}} ,~
\| u _{\varepsilon } \| _{W ^{1,p} } ,~ 
\| u _{\varepsilon } \| _{L^{\infty }} \leq C _n,
\end{equation*}
where $C_n $ is the  general constant which is independent of $\varepsilon > 0$.
By Rellich-Kondrachov's theorem, we can extract a subsequence 
such that
\begin{align*}
u _{\varepsilon } \to \exists u_n ~~&~~\text{strongly in }L^{p} (\Omega ) ,\\
				~~&~~\text{weakly in }W^{1,p }_0 (\Omega ) ,\\
				~~&~~\ast \text{-weakly in }L^{\infty } (\Omega ) ,\\
\xi _{\varepsilon } \to \exists \xi _n ~~&~~\text{strongly in }L^{r} (\Omega ) ~~\text{ for any } r \in (1,\infty ),\\
				~~&~~\ast \text{-weakly in }L^{\infty }(\Omega ) ,\\
\nabla \cdot \alpha ( \nabla  u _{\varepsilon } ) \to \exists \eta _n 
		~~&~~\text{strongly in }L^{r} (\Omega ) ~~\text{ for any } r \in (1,\infty ),\\
				~~&~~\ast \text{-weakly in }L^{\infty }(\Omega ) .				
\end{align*}
Let $w _{\varepsilon } := b _{\varepsilon } ^{-1} (u _{\varepsilon }  )$,
then $  \| g(w  _{\varepsilon } ) \| _{L^{\infty }} \leq C_n  $
and   $ \| b (w _{\varepsilon } ) \| _{L^{\infty }} \leq  \| u  _{\varepsilon }  \| _{L^{\infty }} \leq C_n  $.
Remark that 
$g + b $ is surjective
and $w _{\varepsilon } = g (w _{\varepsilon } ) + b (w _{\varepsilon } )$ holds 
by the maximal monotonicity of $\beta $.
Hence 
\begin{align*}
w _{\varepsilon } \to \exists w_n  ~~&~~\text{strongly in }L^{p} (\Omega ) ,\\
				~~&~~\ast \text{-weakly in }L^{\infty } (\Omega ) , \\
\varepsilon w _{\varepsilon } \to 0  ~~&~~\text{strongly in }L^{\infty } (\Omega ) .
\end{align*}
Since $b$ and $g $ are Lipschitz continuous,
we obtain $u_n = b (w _n )$, $\xi _n = g (w _n )$,  $w _n =  g (w _n ) + b (w _n )$,
and $\eta _n = \nabla \cdot \alpha ( \nabla  u _n )$.
Testing the difference of \eqref{E-Ap01} by 
$b _{\varepsilon _1} (w _{\varepsilon _1}) -b _{\varepsilon _2} (w _{\varepsilon _2})$
and using \eqref{A-a03},
we have 
\begin{equation*}
c \| \nabla b _{\varepsilon _1} (w _{\varepsilon _1}) -\nabla b _{\varepsilon _2} (w _{\varepsilon _2})  \| ^p_{L^p } 
\leq 
 (\varepsilon _2 - \varepsilon _1) \int_{\Omega }(g (w _{\varepsilon _1}) -g (w _{\varepsilon _2}) )
  w _{\varepsilon _2 }   dx , 
\end{equation*}
which yields $b _{\varepsilon } (w _{\varepsilon }) \to b  (w _n)$
strongly in $W^{1,p} _0 (\Omega )$.
By Lebesgue's dominated convergence theorem and continuity of $\alpha $, we have 
$\alpha ( \nabla b _{\varepsilon } (w _{\varepsilon }) ) \to \alpha ( \nabla b(w _n) ) $
strongly in $L^{p'} (\Omega )$.
Therefore, the limit of solution to \eqref{E-Ap01} as $\varepsilon \to 0 $
coincides with a unique  solution to 
\begin{equation}
\begin{cases}
~~ g (w _n(x)) 
		-\nabla \cdot \alpha ( \nabla b  (w_n (x)) ) = h _n (x)~~~~&~~x \in \Omega ,\\
~~ b(w_n (x)) = 0   ~~~~&~~x \in \partial \Omega .
\end{cases}
\label{E-Ap02} 
\end{equation}
Moreover, since $g \circ b ^{-1} _{\varepsilon }$ is single-valued,
$w _{\varepsilon }$ is an entropy solution to \eqref{E-Ap01}, i.e.,
\begin{equation*}
\int _{\Omega } H^{\circ} (w _{\varepsilon } -s ) 
\LD (g (w _{\varepsilon }) - h _n  )\zeta  
+  \alpha ( \nabla b _{\varepsilon }(w _{\varepsilon }) )\cdot \nabla \zeta  \RD dx  \leq 0
\end{equation*}
holds for every $(s , \zeta )$ in Lemma \ref{E-E-Lem}.
By the maximal monotonicity of $\widetilde{H}$ and strong convergence of $w _{\varepsilon }$ in $L^p (\Omega )$,
there is a subsequence of $H^{\circ } (w _{\varepsilon } - s)$  which $\ast $-weakly 
converges in $L^{\infty } (Q )$ and its limit $ \chi _s $ belongs to $H (w _ n - s )$ for a.e. $(x,t ) \in Q $.
Hence we have for any $s$
\begin{equation*}
\int _{\Omega } \chi _s
\LD (g (w _n) - h _n  )\zeta  
+  \alpha ( \nabla b (w _n) )\cdot \nabla \zeta  \RD dx  \leq 0 .
\end{equation*}
Here put $\{ \sigma  _{i} \} _{i\in \N}$ 
such that $\sigma _{i}  \searrow s $  as $i \to \infty $.
 Then $\chi _{\sigma _i} \in H (w _ n - \sigma _i ) $ satisfies 
 $\chi _{\sigma _i}  \nearrow H ^{\circ} (w _ n - s )$ for a.e. $(x,t) \in Q$. 
 Hence we can derive 
 \begin{equation}
\int _{\Omega } H^{\circ} (w _n -s  )
\LD (g (w _n) - h _n  )\zeta  
+  \alpha ( \nabla b (w ) )\cdot \nabla \zeta  \RD dx  \leq 0 
\label{E-Lem003} 
\end{equation}
for any $(s, \zeta )$, i.e., $w _n $ is an entropy solution to \eqref{E-Ap02}.

By repeating a priori estimates given in Section 3,
we get 
\begin{equation*}
 \| g (w _ n ) \| _{L^{p'}} +  \| b (w _ n ) \| _{W^{1 ,p}} + 
  \| \nabla \cdot \alpha ( \nabla b (w _ n ) ) \| _{L^{p' }}   \leq C
\end{equation*}
and 
\begin{equation*}
 \| \nabla b (w _ n )   -  \nabla  b (w _ m )  \| ^{p-1}_{L^p}
   \leq C \| h _n -h_ m  \| _{L^{p'}}
\end{equation*}
for any $n, m \in \N$,
where $C$ is suitable constant independent of $n$.
Moreover, Theorem \ref{Th3-2} yields
\begin{equation*}
\|  g (w _ n ) -  g (w _ m ) \| _{L^1} +
\|  \nabla \cdot \alpha (\nabla b (w _ n ) )  -  \nabla \cdot \alpha (\nabla b (w _ m ) ) \| _{L^1} 
\leq 3 \| h _n -h_ m  \| _{L^1}.
\end{equation*}
Hence we can see that
\begin{align*}
w _n \to \exists w ~~&~~\text{strongly in }L^{r} (\Omega ) \hspace{3mm} \forall r \in (1, \min \{ p' , p \} ),\\
b (w _n) \to b( w) ~~&~~\text{strongly in }W^{1,p }_0 (\Omega ) ,\\
g (w _n) \to g ( w) ~~&~~\text{strongly in }L^{1 } (\Omega ) \text{ and weakly in }L^{p'} (\Omega ) ,\\
\alpha ( \nabla b (w _n) ) \to \alpha ( \nabla  b( w)) ~~&~~\text{strongly in }L^{p' } (\Omega ) ,\\
\nabla \cdot \alpha ( \nabla b (w _n) ) \to 
\nabla \cdot  \alpha ( \nabla  b( w)) ~~&~~\text{strongly in }L^{1 } (\Omega ) \text{ and weakly in }L^{p'} (\Omega ) ,
\end{align*}
and $w $ is a unique solution to \eqref{E-Ellip}.
By taking the limit of \eqref{E-Lem003} as $n\to \infty $ and repeating the argument above,
we can assure that $w $ satisfies \eqref{E-Lem001}.
\end{proof}

Let  $u _{\tau} := \{ u ^0 _{\tau} , u ^1 _{\tau} , \ldots , u ^N _{\tau} \}$
and $u _{\tau} := \{ \xi ^0 _{\tau} , \xi ^1 _{\tau} , \ldots , \xi ^N _{\tau} \}$
be sequences determined by  \eqref{TDP}
and define $v _{\tau }$ by $v ^n _{\tau} := u ^n _{\tau}  + \xi ^n _{\tau}  $
(remark that $\xi ^n _{\tau}  = g (v ^n _{\tau} ) $ and $u ^n _{\tau}  = b (v ^n _{\tau} ) $).
Note that \eqref{Conv-01} and \eqref{Conv-02} imply
\begin{equation}
 \Pi _{\tau} v _{\tau}  \to  v = u + \xi
 ~~~~~\text{ strongly in } L^{p} (0,T ; L^{r} (\Omega ) )
 \hspace{3mm} \forall r \in (1, \min \{ p , p ' \} ).
\label{Conv-03} 
\end{equation}
Hence 
applying Lemma \ref{E-E-Lem} to \eqref{TDP},
and using \eqref{Conv-01} \eqref{Conv-02} \eqref{Conv-03},
we have (let $\Pi _\tau v _{\tau } (t) = v _ 0  $ for $t < 0$)
\begin{align*}
0 &\geq 
\int _{Q} H^{\circ} (\Pi _\tau v _{\tau } (x, t) -s ) 
\LD \alpha (\nabla \Pi _{\tau } b (v _{\tau } (x, t))) \cdot \nabla \zeta (x,t)  \right .\\
&\hspace{5mm} 
\left .
+  \LC \frac{g (\Pi _{\tau} v _{\tau} (x, t)) - g (\Pi _{\tau} v _{\tau} (x, t-\tau))  }{\tau} - \Pi _{\tau } f _{\tau} \RC
\zeta (x, t)  
\RD dx dt \\
&\geq 
\int _{Q} H^{\circ} (\Pi _\tau v _{\tau } (x, t) -s ) 
\LC \alpha (\nabla \Pi _{\tau } b (v _{\tau } (x, t)))
			\cdot \nabla \zeta (x,t)  
-  \Pi _{\tau } f _{\tau}\zeta (x, t) 
\RC dx dt \\
&\hspace{3mm} + \frac{1}{\tau }  \int_{0 }^{\tau } \int _{\Omega }
  H^{\circ} (\Pi _\tau v _{\tau } (x, t) -s )\LC  g (\Pi _{\tau } v_{\tau } (x,t )) -g(s) \RC \zeta (x ,t ) dx dt
\\  
&\hspace{3mm} - \frac{1}{\tau }  \int_{ -\tau  }^{0 } \int _{\Omega }
  H^{\circ} (\Pi _\tau v _{\tau } (x, t) -s )\LC  g (\Pi _{\tau } v_{\tau } (x,t )) -g(s) \RC \zeta (x ,t + \tau ) dx dt
\\  
&\hspace{3mm} +  \int_{T- \tau  }^{T  } \int _{\Omega }
  H^{\circ} (\Pi _\tau v _{\tau } (x, t) -s )\LC  g (\Pi _{\tau}  v_{\tau } (x,t )) -g(s) \RC 
\frac{  \zeta (x ,t ) -\zeta (x ,t +\tau ) }{ \tau }  dx dt
\\  
& \to 
\int _{Q} \chi _{v, s } 
\LC \alpha (\nabla  b (v  (t))) \cdot \nabla \zeta - \LC  g (v) -g(s) \RC 
\partial _t \zeta  -f \zeta  \RC dx dt \\
&\hspace{75mm} 
- 
 \int _{\Omega }
 \LC  \xi _0  -g(s) \RC  _+ \zeta (0 ) dx   
\end{align*}
By putting $\{ \sigma  _{i} \} _{i\in \N}$ 
such that $\sigma _{i}  \searrow s $  as $i \to \infty $,
we obtain \eqref{E-Def1} for every $ (s ,\zeta )$ in Definition \ref{E-entro}.
Immediately,
we have \eqref{E-Def2} by replacing 
$v$ with $-v $,
which is a solution to  \eqref{E-001}
with 
$\hat{\beta } (\sigma ) := - \beta  (- \sigma )$
substituted for $\beta $,
$\hat{\alpha  } (z ) := - \alpha  (- z )$ for $\alpha  $,
$\hat{f  }  := -f$ for $f $,
and 
$\hat{\xi   } _0   := -\xi _0$ for $\xi _0 $.
Thus  it follows that
\begin{Th}
Assume (H.$\alpha $), (H.$\beta $), and \eqref{Unique}.
Then  \eqref{E-001} possesses at least one entropy solution.
\end{Th}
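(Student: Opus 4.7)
The plan is to construct the entropy solution as the limit of the time-discretization scheme (P$_{\tau , n}$) from Section 4, exploiting the elliptic entropy inequality of Lemma \ref{E-E-Lem} at each step. Writing $v ^n _{\tau } := u ^n _{\tau } + \xi ^n _{\tau }$, we have $b(v ^n _{\tau }) = u ^n _{\tau }$ and $g(v ^n _{\tau }) = \xi ^n _{\tau }$, and \eqref{TDP} rearranges to
\begin{equation*}
g(v ^{n+1} _{\tau } ) - \tau  \nabla \cdot \alpha ( \nabla b (v ^{n+1} _{\tau } ) )
= \xi ^n _{\tau } + \tau f ^n _{\tau } \in L^{p'} (\Omega ),
\end{equation*}
which is exactly the form handled by Lemma \ref{E-E-Lem} (the scalar factor $\tau $ in front of $\alpha $ does not affect the argument). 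Thus for each admissible $(s, \zeta )$ and each $n$, we obtain a discrete entropy inequality.

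Next I would integrate this inequality in $t$ over $(n\tau , (n+1)\tau ]$, sum over $n = 0, \ldots , N-1$, and apply discrete integration by parts (Abel summation) to the finite-difference term $[g(v ^{n+1} _{\tau }) - g(v ^n _{\tau })]/\tau $, moving the time difference onto $\zeta $. As in the computation already displayed just before the statement, this produces the time-integral of $-\LC g(\Pi _{\tau } v_{\tau }) - g(s) \RC \partial _t \zeta $ together with the boundary contribution at $t=0$ involving $\LC \xi _0 - g(s) \RC _+ \zeta (\cdot , 0)$ (the $( \cdot ) _+ $ appearing because $H^{\circ}(v ^0 _{\tau } - s)(\xi _0 - g(s)) = (\xi _0 - g(s)) _+$ when $g$ is non-decreasing). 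At this stage every quantity is expressed in terms of the interpolations $\Pi _{\tau } v _{\tau } $, $\Pi _{\tau } b (v _{\tau } ) $, $\Pi _{\tau } f _{\tau } $, $\Pi _{\tau } g (v _{\tau } ) $, and $\zeta $.

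The limit $\tau \to 0 $ then uses the strong convergences already established: $\Pi _{\tau } v _{\tau } \to v $ in $L^p (0,T ; L ^r  )$ by \eqref{Conv-03}, so (up to a subsequence) a.e.\ in $Q$; $\nabla \Pi _{\tau } b (v _{\tau } ) \to \nabla u $ in $L^p (Q)$ by \eqref{Conv-02}, hence $\alpha ( \nabla \Pi _{\tau } b (v _{\tau } ) ) \to \alpha ( \nabla u )$ in $L^{p'} (Q)$ via continuity and \eqref{A-a02}; $\Pi _{\tau } f _{\tau } \to f$; and $g (\Pi _{\tau } v _{\tau } ) \to g (v)$ strongly (Lipschitz continuity of $g$). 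The only non-trivial term is the weak-$\ast$ limit of $H^{\circ} (\Pi _{\tau } v_{\tau } - s)$ in $L^{\infty} (Q)$, which by a.e.\ convergence and maximal monotonicity of $\widetilde{H}$ lies in $H(v - s)$; call it $\chi _{v ,s}$. This yields \eqref{E-Def1} with $\chi _{v , s}$ in place of $H^{\circ}(v-s)$.

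Finally, to upgrade $\chi _{v, s}$ to the minimal section $H^{\circ}(v-s)$, I would take $\sigma _i \searrow s$ and apply the inequality just derived for each $\sigma _i $: since $\chi _{v, \sigma _i } \in H(v-\sigma _i )$ agrees with $H^{\circ}(v - \sigma _i )$ off the negligible set $\{ v = \sigma _i \}$, and $H^{\circ}(v - \sigma _i ) \nearrow H^{\circ}(v - s)$ pointwise as $\sigma _i \searrow s$, monotone convergence yields \eqref{E-Def1}. The companion inequality \eqref{E-Def2} is obtained by the symmetry trick at the end of the proof of Lemma \ref{E-E-Lem}: $-v$ solves \eqref{E-001} with $\beta , \alpha , f, \xi _0$ replaced by $\hat \beta (\sigma ) := - \beta (-\sigma )$, $\hat \alpha (z) := - \alpha (-z)$, $-f$, $-\xi _0$, and applying the inequality just proved to this transformed problem gives \eqref{E-Def2}. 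The main obstacle is the discontinuity of $H^{\circ}$ at $0$, which forces the two-step passage (first identifying the weak-$\ast$ limit as a selection, then recovering the minimal section via monotone approximation of the parameter $s$); all the analytic compactness needed is already packaged in \eqref{Conv-01}--\eqref{Conv-03}.
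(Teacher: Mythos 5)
Your proposal matches the paper's own argument essentially step for step: the paper also applies Lemma \ref{E-E-Lem} to the time-discretized problem \eqref{TDP} written as $g(v^{n+1}_{\tau}) - \tau\,\nabla\cdot\alpha(\nabla b(v^{n+1}_{\tau})) = \xi^{n}_{\tau} + \tau f^{n}_{\tau}$, performs the discrete integration by parts in time, passes to the limit $\tau \to 0$ via \eqref{Conv-01}--\eqref{Conv-03} to obtain the inequality with a weak-$\ast$ limit $\chi_{v,s} \in H(v-s)$, recovers $H^{\circ}(v-s)$ by taking $\sigma_i \searrow s$, and gets \eqref{E-Def2} by the same symmetry $v \mapsto -v$, $\hat\beta(\sigma) = -\beta(-\sigma)$, $\hat\alpha(z) = -\alpha(-z)$. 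The only cosmetic difference is that the monotone-parameter step does not require the set $\{v = \sigma_i\}$ to be negligible (as you suggest); it suffices that $\chi_{v,\sigma_i} \in H(v-\sigma_i)$ converges a.e.\ to $H^{\circ}(v-s)$ since $\sigma_i > s$, which is how the paper argues.
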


If $\beta  $ is single-valued, 
we can show that any weak solution to \eqref{E-001} is entropy solution
by  the following lemma:
\begin{Le}
\label{E-Lem-Single} 
Assume (H.$\alpha $), (H.$\beta $),
and let $v$ be a weak solution to \eqref{E-001}.
Then $v$ satisfies 
\begin{equation}
\begin{split}
&\int _{Q} H^{\circ } (v  - s) 
\LD \alpha (x , \nabla b (v )) \cdot \nabla \zeta  - (g (v ) - g(s)) \partial _t \zeta - f \zeta  \RD dx dt
		 \\
&~~~ ~~~- \int _{\Omega } (\xi _0 - g (s)) _+ \zeta (\cdot , 0) dx \\
&~~~ ~~~~~~ ~~~
= - \lim_{\lambda \to 0 } \int_{Q} \alpha (x , \nabla b(v)) \cdot \nabla b(v) H' _{\lambda } (b (v) -b (s))\zeta dx dt ,
\end{split}
\label{E-Lem004}
\end{equation}
for any $(s , \zeta )  $ which fulfills $b(s) \not \in E$,
$\zeta \geq 0$, and either of i) or ii) in \eqref{Alter}
and 
\begin{equation}
\begin{split}
&\int _{Q} H^{\circ } (- s - v ) 
\LD \alpha (x , \nabla b (v )) \cdot \nabla \zeta  - (g (v ) - g(-s)) \partial _t \zeta - f \zeta  \RD dx dt
		 \\
&~~~ ~~~+ \int _{\Omega } ( g (-s ) - \xi _0 ) _+ \zeta (\cdot , 0) dx \\
&~~~ ~~~~~~ ~~~
=  \lim_{\lambda \to 0 } \int_{Q} \alpha (x , \nabla b(v)) \cdot \nabla b(v) H' _{\lambda } (b (-s) -b (v) )\zeta  dx dt ,
\end{split}
\label{Entro-Lem002}
\end{equation}
for any $(s , \zeta ) $ which fulfills $-b (-s) \not \in E$,
$\zeta \geq 0$, and either of i) or ii)  in \eqref{Alter}.
\end{Le}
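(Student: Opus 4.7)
The plan is to test the weak formulation of \eqref{E-001} against the admissible function $\varphi_\lambda := H_\lambda(b(v)-b(s))\zeta$ (with $H_\lambda$ the Yosida approximation of the Heaviside graph), use the chain-rule formula \eqref{Lem-Cal} for the time-derivative term, and pass to the limit $\lambda \to 0$. Admissibility $\varphi_\lambda \in L^\infty(0,T; W^{1,p}_0(\Omega))$ follows from $b(v) \in L^\infty(0,T; W^{1,p}_0(\Omega))$ and the Lipschitz property of $H_\lambda$: the vanishing on $\partial \Omega$ is automatic in case ii), where $\supp \zeta \subset \subset \Omega$, and in case i) it follows from $b(s) \geq 0$ together with $b(v) = 0 \leq b(s)$ on $\partial \Omega$, so that $H_\lambda(b(v)-b(s)) = 0$ there.

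Applying \eqref{Lem-Cal} with the nondecreasing Lipschitz choice $\gamma(r) := H_\lambda(r-b(s))$ and using $\zeta(T) = 0$ gives
\begin{equation*}
-\int_\Omega \Gamma_\lambda(\xi_0)\zeta(0)\,dx = \int_0^T \LA \partial_t g(v),\, H_\lambda(b(v)-b(s))\zeta\RA_{W^{1,p}_0} dt + \int_Q \Gamma_\lambda(g(v))\partial_t\zeta\,dx\,dt,
\end{equation*}
where $\Gamma_\lambda(\sigma) := \int_0^\sigma H_\lambda((\beta^{-1})^\circ(\tau)-b(s))\,d\tau$. Substituting $\partial_t g(v) = f + \nabla\cdot\alpha(\nabla b(v))$ and expanding $\nabla\varphi_\lambda$ by the product rule then yields, after rearrangement,
\begin{align*}
& \int_Q \LC H_\lambda(b(v)-b(s))\LB \alpha(\nabla b(v))\cdot\nabla\zeta - f\zeta\RB - \Gamma_\lambda(g(v))\partial_t\zeta\RC dx\,dt \\
& \hspace{8mm} - \int_\Omega \Gamma_\lambda(\xi_0)\zeta(0)\,dx = -\int_Q H'_\lambda(b(v)-b(s))\,\alpha(\nabla b(v))\cdot\nabla b(v)\,\zeta\,dx\,dt.
\end{align*}

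For the passage to the limit as $\lambda \to 0$, the hypothesis $b(s) \notin E$ guarantees $b^{-1}(b(s)) = \{s\}$, so $b(v) > b(s) \Leftrightarrow v > s$ and hence $H^\circ(b(v)-b(s)) = H^\circ(v-s)$ a.e.; likewise $(\beta^{-1})^\circ(\tau) > b(s) \Leftrightarrow \tau > g(s)$, from which a direct integration shows $\Gamma_\lambda(\sigma) \to (\sigma - g(s))_+ + g(s)\chi_{\{g(s)<0\}}$ as $\lambda \to 0$. The constant offset $g(s)\chi_{\{g(s)<0\}}$ contributed by $\Gamma_\lambda(g(v))$ and by $\Gamma_\lambda(\xi_0)$ cancels, since $\int_Q \partial_t \zeta\,dx\,dt = -\int_\Omega \zeta(0)\,dx$ when $\zeta(T)=0$. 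Dominated convergence applied to each of the first three LHS terms (the integrands are dominated by $|\alpha(\nabla b(v))||\nabla\zeta| + |g(v)-g(s)||\partial_t\zeta| + |f||\zeta|$), together with the identity $H^\circ(v-s)(g(v)-g(s)) = (g(v)-g(s))_+$, then yields the left-hand side of \eqref{E-Lem004}; since this limit exists, so must the RHS, proving \eqref{E-Lem004}. The companion identity \eqref{Entro-Lem002} follows by applying the same argument to $-v$, which solves \eqref{E-001} with $\beta$, $\alpha$, $f$, $\xi_0$ replaced by $\hat\beta(\sigma) := -\beta(-\sigma)$, $\hat\alpha(z) := -\alpha(-z)$, $-f$, $-\xi_0$ respectively.

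The hardest part will be the careful computation of $\lim_{\lambda\to 0}\Gamma_\lambda$ and the verification that the $g(s)$-offset cancels out; secondarily, one must invoke $b(s) \notin E$ to justify the a.e.\ identity $H^\circ(b(v)-b(s)) = H^\circ(v-s)$, which is what converts the Heaviside factor outside the bracket into the form demanded by the target identity.
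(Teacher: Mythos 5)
Your proposal is correct and follows essentially the same route as the paper: test the equation with $H_{\lambda }(b(v)-b(s))\zeta $, apply the chain-rule formula \eqref{Lem-Cal} with $\gamma =H_{\lambda }(\cdot -b(s))$, identify $\lim _{\lambda \to 0}\Gamma _{\lambda }$, use $b(s)\not \in E$ to replace $H^{\circ }(b(v)-b(s))$ by $H^{\circ }(v-s)$, and obtain \eqref{Entro-Lem002} by the reflection $v\mapsto -v$. Your explicit tracking of the constant offset $\min \{ g(s),0\} $ in $\Gamma _{\lambda }$ and its cancellation via $\int _{Q}\partial _t\zeta \,dx\,dt=-\int _{\Omega }\zeta (0)\,dx$ is a slightly more careful rendering of the paper's shift of the lower integration limit from $0$ to $g(s)$, but the argument is the same.
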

\begin{proof}
It is sufficient to prove \eqref{E-Lem004}
for any  $(s , \zeta )  $ such that $b(s) \not \in E$.
Multiplying \eqref{E-001} by $H_{\lambda } (b(v) - b (s) ) \zeta \in L^{\infty} (0,T ; W^{1,p} _0 (\Omega ))$
and applying Lemma 4 of \cite{C} (recall \eqref{Lem-Cal}),
we have 
\begin{equation}
\begin{split}
& \int_{Q} \Gamma (g (v )) \partial _t \zeta dxdt + \int_{\Omega } \Gamma (\xi _0 )  \zeta (0) dx \\
 =& \int_{Q} \alpha (x , \nabla  b (v)) \cdot \nabla \LC H_{\lambda } (b(v) - b (s) ) \zeta  \RC dx  dt 
-   \int_{Q} f  H_{\lambda } (b(v) - b (s) ) \zeta  dx   dt  ,  
\end{split}
\label{E-Lem005} 
\end{equation}
where 
$\Gamma $ is defined by \eqref{Lem-Cal-Gam}
with $\gamma (\sigma ) := H_{\lambda } (b(v) - b (s) ) $.
 By $\beta ^{-1} = b \circ g ^{-1}$, we get 
 \begin{align*}
\Gamma (g(v)) &:= \int_{0}^{g(v) } H_{\lambda } ( \LC  \beta ^{-1} \RC ^{\circ } (\sigma ) - b (s)   ) d \sigma 
= \int_{g(s)}^{g(v) } H_{\lambda } ( \LC  \beta ^{-1} \RC ^{\circ } (\sigma ) - b (s)   ) d \sigma \\
&\to ( g (v) -g (s)) _+  = H_{0} (v -s ) (g (v) - g (s)) ,\\
  \Gamma (\xi _0 ) 
& = \int_{g(s)}^{\xi _0 } H_{\lambda } ( \LC  \beta ^{-1} \RC ^{\circ } (\sigma ) - b (s)   ) d \sigma \\
& \to (\xi _0 -g (s)) _+  
\end{align*}
Therefore by letting $\lambda \to 0$,
we obtain \eqref{E-Lem004}
 since
$H_{0} (v (x,t ) - s) = H_0 (b(v (x,t )) - b(s))$ holds by  $b(s) \not \in E$.
\end{proof}


Consequently, we can derive the following,
which implies the comparison principle and the uniqueness of entropy solution.
\begin{Th}
Assume (H.$\alpha $), (H.$\beta $), and \eqref{Unique}.
Let $v _ i$ ($i=1 ,2$) be a entropy solution to
\eqref{E-001}
with 
$v _ i (\cdot , 0)= v _{i0} $, $\xi _{0i} = g(v _ {i0} ) $,
 $u _{0i} = b(v _ {i0} ) $
and $f _i  $
satisfying \eqref{Assume2}.
Then it holds that 
\begin{equation}
\begin{split}
&\int _{\Omega } (g (v _1 (x,t)) - g (v _2 (x,t))) _+ dx \\
&\hspace{15mm} 
\leq \int _{\Omega } (\xi_{10} (x) - \xi_{20} (x)) _+ dx +
	\int _{0}^{t} \int _{\Omega } (f _1 - f_ 2) _+ dx dt
\end{split}
\label{comperi}
\end{equation}
for every $t \in [0,T]$, and therefore
\begin{equation*}
 \|g (v _1 (t)) - g (v _2 (t)) \| _{L^1} 
\leq \| \xi_{10}  - \xi_{20} \| _{L^1 } +
	\int _{0}^{t} \| f _1 (t)- f_ 2 (t) \| _{L^1} dt
\end{equation*}
for every $t \in [0,T]$.
\end{Th}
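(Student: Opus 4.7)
The plan is to perform a Kruzhkov-style doubling of variables in the spirit of Carrillo~\cite{C}. For fixed $(y,\sigma)\in Q$, I would apply the entropy inequality \eqref{E-Def1} to $v_1$ with the constant $s$ replaced by the value $v_2(y,\sigma)$ and a test function of the form $\zeta(x,t;y,\sigma)=\rho_{\varepsilon}(x-y)\rho_{\delta}(t-\sigma)\theta(x,t)$, where $\rho_{\varepsilon},\rho_{\delta}$ are symmetric nonnegative mollifiers and $\theta\in\mathscr{D}([0,T)\times\overline{\Omega})$ is nonnegative. I would then integrate over $(y,\sigma)\in Q$. Symmetrically, apply the analog of \eqref{E-Def1} to $v_2$ in the variables $(y,\sigma)$ with the constant replaced by $v_1(x,t)$ (handled, if needed, via \eqref{E-Def2} and the $\hat{\beta}(\sigma)=-\beta(-\sigma)$ substitution used in the existence proof), and sum the two inequalities. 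The admissibility case~i) or~ii) of \eqref{Alter} is ensured by cutting $\theta$ appropriately near $\partial\Omega$ when the sign of the argument of $H^{\circ}$ is not guaranteed to be nonnegative.

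The next step is to pass to the limit $\varepsilon,\delta\to 0$. The $\partial_t\zeta$ contributions concentrate on the diagonal and combine into $-\int_Q\bigl(g(v_1)-g(v_2)\bigr)_+\partial_t\theta\,dxdt$, the source terms into $-\int_Q(f_1-f_2)H^{\circ}(v_1-v_2)\theta\,dxdt$, and the initial-data terms into $-\int_{\Omega}(\xi_{10}-\xi_{20})_+\theta(\cdot,0)dx$. The diffusion terms combine into
\begin{equation*}
\int_Q H^{\circ}(v_1-v_2)\bigl[\alpha(\nabla b(v_1))-\alpha(\nabla b(v_2))\bigr]\cdot\nabla\theta\,dxdt
\end{equation*}
plus a residual concentrated on the diagonal. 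The key identity, which is the Kruzhkov--Carrillo dissipation formula, shows that this residual equals
\begin{equation*}
-\lim_{\lambda\to 0}\int_Q\bigl[\alpha(\nabla b(v_1))-\alpha(\nabla b(v_2))\bigr]\cdot\nabla\bigl[b(v_1)-b(v_2)\bigr]\,H'_{\lambda}(b(v_1)-b(v_2))\,\theta\,dxdt,
\end{equation*}
which is \emph{nonpositive} by the monotonicity assumption \eqref{A-a03} and the fact that $H'_{\lambda}\ge 0$. Discarding this term preserves the inequality.

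To finish, I would choose a sequence of test functions $\theta_{\kappa,m}(x,t)=\eta_{\kappa}(t)\chi_m(x)$ with $\eta_{\kappa}$ approximating $\mathbf{1}_{[0,t_0]}$ from above and $\chi_m\nearrow 1$ on $\Omega$. The spatial cutoff can be removed since $\xi_i=g(v_i)\in L^{\infty}(0,T;L^{p'}(\Omega))$ and the gradient boundary contributions can be controlled using $b(v_i)\in L^{\infty}(0,T;W_0^{1,p}(\Omega))$; choosing $\theta=\eta_{\kappa}$ in the formulation ii)/i) is legitimate because $\xi_{10}-\xi_{20}$ lies in $L^1$. Passing $\kappa\to\infty$ gives \eqref{comperi}, and the second statement follows by applying \eqref{comperi} once with $(v_1,v_2)$ and once with $(v_2,v_1)$ and adding.

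The main obstacle will be the rigorous justification of the doubling limit: one must verify that the commutators between the mollifications $\rho_{\varepsilon}*$, $\rho_{\delta}*$ and the nonlinear fluxes $\alpha(\nabla b(\cdot))$ and $g(\cdot)$ vanish in $L^1(Q)$, and that the off-diagonal gradient terms reorganize into the symmetric dissipation integral shown above. A secondary technical point is the handling of the at-most-countable set $E$ in \eqref{null-set} where $\beta$ is multi-valued, since $H^{\circ}(v_1-v_2)$ and $H^{\circ}(b(v_1)-b(v_2))$ may disagree there; as in Carrillo, one selects the values $v_2(y,\sigma)$ and $v_1(x,t)$ outside of a null set of times and uses the essential range of $v_i$ to stay away from $E$ where possible, while observing that on the remaining set $\{b(v_1)=b(v_2)\}$ the diffusion integrand vanishes identically, so the argument still closes.
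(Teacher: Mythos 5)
Your overall strategy---a Carrillo-type doubling of variables, collapse to the diagonal, discarding of a signed dissipation term, and then specialization of the test function---is the same as the paper's, but there is a genuine gap at the decisive step. You propose to sum the two entropy inequalities (\eqref{E-Def1} for $v_1$ with $s=v_2(y,\sigma)$ and its analogue for $v_2$ with $s=v_1(x,t)$) and assert that the diagonal residual of the diffusion terms ``equals'' the symmetric dissipation $-\lim_{\lambda\to0}\int[\alpha(\nabla b(v_1))-\alpha(\nabla b(v_2))]\cdot\nabla[b(v_1)-b(v_2)]\,H'_{\lambda}(b(v_1)-b(v_2))\,\theta$. This cannot be extracted from the entropy inequalities alone: \eqref{E-Def1}--\eqref{E-Def2} were obtained precisely by \emph{discarding} the nonnegative diagonal terms $\int\alpha(\nabla b(v_i))\cdot\nabla b(v_i)\,H'_{\lambda}(\cdots)\,\zeta$, so after doubling only the cross terms $\alpha(\nabla_x b(v_1))\cdot\nabla_y b(v_2)$ and $\alpha(\nabla_y b(v_2))\cdot\nabla_x b(v_1)$ survive, and their sign is not determined by \eqref{A-a03}; the square cannot be completed. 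The missing ingredient is the \emph{equality} form of the entropy relation satisfied by weak solutions for constants $s$ with $b(s)\notin E$, i.e.\ Lemma \ref{E-Lem-Single} (formulas \eqref{E-Lem004}--\eqref{Entro-Lem002}), which retains the diagonal dissipation and is used in tandem with the inequalities, together with the splitting of $Q\times Q$ by the degenerate sets $Q_1,Q_2$ built from $E$ in \eqref{null-set}. Your closing remark that on $\{b(v_1)=b(v_2)\}$ the integrand vanishes does not repair this: the problematic contributions live where $b(v_1)\neq b(v_2)$ but one of the values lies in $E$, which is exactly why the paper restricts the dissipation limit to $(Q\setminus Q_1)\times(Q\setminus Q_2)$.

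A second, smaller but real, gap is your removal of the spatial cutoff. To reach \eqref{comperi} one must eventually test with $\zeta=\zeta(t)$, which does not vanish on $\partial\Omega$, and for such test functions admissibility is tied to case i) of \eqref{Alter} ($s\geq0$). Letting $\chi_m\nearrow1$ with $|\nabla\chi_m|$ blowing up near $\partial\Omega$ is not controlled merely by $b(v_i)\in L^{\infty}(0,T;W^{1,p}_0(\Omega))$ and $g(v_i)\in L^{\infty}(0,T;L^{p'}(\Omega))$; the boundary information must enter through a second doubled inequality comparing the positive parts $v_i^{+}$ with the boundary datum $0$, using test functions supported up to $\overline{\Omega}$ in one group of variables (the functions $Z_2$ and the $v^{\pm}$ inequality in the paper, following \S 5 of \cite{C}), which is what yields \eqref{E-Cor-01} for all $\zeta\in\mathscr{D}([0,T)\times\overline{\Omega})$. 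Your final symmetrization step, deducing the $L^1$ estimate from \eqref{comperi} applied to $(v_1,v_2)$ and $(v_2,v_1)$, is fine.
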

\begin{proof}
Let $(y , s) \in Q $ and $(x , t) \in Q $ be variables used for $i =1 $ and $2$, respectively.
Namely, set $v _1 = v _1 (y , s)$, $f _1 = f _1 (y , s)$,
and $v _2 = v _2 (x , t)$, $f _2 = f _2 (x , t)$.
Define a non-negative smooth function $Z _1= Z_1 ( y ,s , x,t  )$ on
$Q \times Q$ such that
\begin{align*}
(y ,s ) \mapsto Z _1 ( y ,s ,x,t  ) \in \mathscr{D} ([0, T ) \times \Omega ) 
		~~~~&\text{ for each fixed } (x ,t ) \in Q ,\\
(x ,t ) \mapsto Z _1 (y ,s ,x,t  ) \in \mathscr{D} ([0, T ) \times \Omega ) 
		~~~~&\text{ for each fixed } (y ,s ) \in Q .		
 \end{align*}
Then by the definition of entropy solution
and Lemma \ref{E-Lem-Single}, 
we have
\begin{align*}
&\int _{Q \times Q} H ^{\circ } (v_ 1  - v _2 )
	\LD  -(g (v _1 )  -g (v _2 ) ) (\partial _t Z _1+ \partial _s Z_1)
	\right . \\
&\hspace{5mm} 
\left .
+ (\alpha ( \nabla  _x b (v _1 )) - \alpha ( \nabla  _y b (v _2 )) ) 
		\cdot (\nabla _x Z _1+ \nabla _y Z_1 )
				 - (f _1 - f_2  ) Z_1 \RD dyds dxdt \\
&\hspace{15mm} 
 -\int_{Q} \int _{\Omega }
	(\xi _{01} (y) -g (v _2 (x ,t )) ) _+  Z _1(y, 0 , x ,t  ) dy dx dt \\
&\hspace{15mm} 
 -\int_{Q} \int _{\Omega }
	(g (v _1 (y ,s)) - \xi _{02} (x)  ) _+   Z _1(y ,s, x , 0 ) dy dx ds   \\
& \leq 
- \lim_{\lambda \to 0} 
\int_{ (Q \setminus Q _1 ) \times (Q \setminus Q _2 ) }  
 		H ' _{\lambda } ( b (v _1 ) - b (v _2 )) Z_1 \\
&\hspace{20mm} \times 
(\alpha ( \nabla  _x b (v _1 )) - \alpha  \nabla  _y b (v _2 )) ) \cdot
		(\nabla  _x b (v _1 ) -\nabla  _y b (v _2 ))  dy ds dx dt \leq 0 ,
\end{align*}
where
\begin{equation*}
Q _1 = \{  (y,s ) \in Q ;~~ b(v_1 (y,s)) \in E \} ,
~~~
Q _2 = \{  (x,t ) \in Q ;~~ b(v_2 (x,t)) \in E \} 
\end{equation*}
and $E$ is defined as \eqref{null-set}.
Moreover,
let $Z _2= Z_2 ( y ,s  ,x,t )$ 
be a non-negative smooth function such that
\begin{align*}
(y ,s ) \mapsto Z _2 (y ,s  ,x,t  ) \in \mathscr{D} ([0, T ) \times \overline{\Omega} ) 
		~~~~&\text{ for each fixed } (x ,t ) \in Q ,\\
(x ,t ) \mapsto Z _2 (y ,s  ,x,t ) \in \mathscr{D} ([0, T ) \times \Omega ) 
		~~~~&\text{ for each fixed } (y ,s ) \in Q .		
 \end{align*}
Then we get
\begin{align*}
&\int _{Q \times Q} H ^{\circ } (v ^+ _ 1  - v ^+ _2 )
	\LD  
 -(g (v ^+ _1 )  -g (v ^+ _2 ) ) (\partial _t Z _1+ \partial _s Z_1) 	
	\right . \\
&\hspace{20mm} 
+(\alpha (\nabla  _x b (v ^+ _1 )) - \alpha (\nabla  _y b (v ^+ _2 )) ) 
		\cdot (\nabla _x Z _1+ \nabla _y Z_1 ) 
\\
&\hspace{40mm} 
\left .	- (f _1 - (1- H^{\circ } (v ^- _2 )) f_2  ) H^{\circ } (v ^+ _1 )) Z_1 \RD dyds dxdt \\
&\hspace{15mm} 
 -\int_{Q} \int _{\Omega }
	(\xi ^+_{01} (y) -g (v ^+_2 (x ,t )) ) _+  Z _1(y, 0 , x ,t  ) dy dx dt \\
&\hspace{15mm}
 -\int_{Q} \int _{\Omega }
	(g (v ^+ _1 (y ,s)) - \xi ^+_{02} (x)  ) _+   Z _1(y ,s, x , 0 ) dy dx ds   \\
& \leq 
- \lim_{\lambda \to 0} 
\int_{ (Q \setminus Q _1 ) \times (Q \setminus Q _2 ) }  
 		H ' _{\lambda } ( b (v ^+_1 ) - b (v ^+_2 )) Z_1 \\
&\hspace{20mm} \times 
(\alpha (\nabla  _x b (v ^+_1 )) - \alpha (\nabla  _y b (v ^+_2 ) ) \cdot
		(\nabla  _x b (v ^+_1 ) -\nabla  _y b (v ^+_2 ))  dy ds dx dt  \\
&\leq 0 ,
\end{align*}
where $w ^+ $ and $w ^- $ denote the positive part and negative part of $w$.

Therefore we can follow the argument via the convergence of mollification
given in \S 5 of Carrillo \cite{C}
and obtain 
\begin{equation}
\begin{split}
&\int _{Q} H^{\circ } (v_ 1 - v_ 2) \LD - (g (v _1 ) - g(v_2 ) ) \partial _t \zeta \right . \\
&\hspace{15mm} \left . 
+(\alpha (\nabla b (v_1 )) - \alpha (\nabla b (v_2 )) ) \cdot \nabla \zeta
 - (f _1 - f_2 ) \zeta  \RD dx dt  \\
&\hspace{30mm}
- \int _{\Omega } (\xi _{01} - \xi _{02} ) _+ \zeta (x , 0 ) dx \leq 0 
\end{split}
\label{E-Cor-01} 
\end{equation}
for every $\zeta \in \mathscr{D} ([ 0 , T ) \times \overline{\Omega } )$.
By putting $\zeta (x,t ) = \zeta (t)$ in \eqref{E-Cor-01},
we derive \eqref{comperi} (see Corollary 10 of \cite{C}).
\end{proof}
\noindent Especially, we can show that
\begin{Co}
Assume (H.$\alpha $), (H.$\beta $),  \eqref{Unique}, and $\beta $ is single-valued.
Then the solution to (P) given in Theorem \ref{MTh-01} is unique.
\end{Co}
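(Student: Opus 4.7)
The plan is to reduce the corollary to the comparison principle \eqref{comperi} established in the preceding theorem. Given two solutions $(u_i,\xi_i)$, $i=1,2$, to (P) sharing the same data $(u_0,\xi_0,f)$, I would first introduce $v_i := u_i + \xi_i$; because $\beta$ is single-valued, the resolvents behave cleanly so that $\xi_i = g(v_i)$ and $u_i = b(v_i)$, and the regularity stated in Definition \ref{MDe-01} translates directly into the requirements of Definition \ref{E-weak}, making each $v_i$ a weak solution of \eqref{E-001}.

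Next I would upgrade each $v_i$ to an entropy solution. Since $\beta$ is single-valued, the exceptional set $E$ in \eqref{null-set} is empty, hence the identities \eqref{E-Lem004} and \eqref{Entro-Lem002} of Lemma \ref{E-Lem-Single} are available for every admissible $(s,\zeta)$. The right-hand side of \eqref{E-Lem004} equals
\[ -\lim_{\lambda\to 0}\int_Q \alpha(\nabla b(v))\cdot \nabla b(v)\, H'_\lambda(b(v)-b(s))\,\zeta\, dx\,dt, \]
and since $\alpha(0)=0$ together with \eqref{A-a03} yields $\alpha(z)\cdot z \geq 0$, while $H'_\lambda \geq 0$ and $\zeta \geq 0$, this quantity is non-positive, giving \eqref{E-Def1}. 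The mirror computation for \eqref{Entro-Lem002} delivers \eqref{E-Def2}, and $v_i$ is therefore an entropy solution in the sense of Definition \ref{E-entro}.

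With $v_1,v_2$ both entropy solutions carrying identical initial data and forcing, invoking \eqref{comperi} in both directions forces $(g(v_1)-g(v_2))_+ \equiv 0$ and $(g(v_2)-g(v_1))_+ \equiv 0$ almost everywhere, that is $\xi_1=\xi_2=:\xi$. Subtracting the two equations $\partial_t \xi_i + \mathcal{A} u_i = f$ in $W^{-1,p'}(\Omega)$ then yields $\mathcal{A} u_1(t) = \mathcal{A} u_2(t)$ for a.e.\ $t$. Pairing with $u_1(t)-u_2(t)\in W^{1,p}_0(\Omega)$ and applying the strict monotonicity \eqref{A-a03} together with the homogeneous Dirichlet condition (via Poincar\'{e}'s inequality) gives $u_1 = u_2$. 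The main obstacle, really the only step that needs genuine verification rather than bookkeeping, is the sign analysis that promotes the weak-solution identities \eqref{E-Lem004}--\eqref{Entro-Lem002} to the entropy inequalities \eqref{E-Def1}--\eqref{E-Def2}; the reformulation $v=u+\xi$ is purely algebraic, the vanishing of $E$ is built into the single-valuedness hypothesis, and the passage from $\xi_1=\xi_2$ to $u_1=u_2$ is the standard monotonicity argument for $\mathcal{A}$.
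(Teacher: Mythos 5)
Your proposal is correct and takes essentially the same route as the paper: rewrite (P) as \eqref{E-001}, note that single-valuedness of $\beta$ makes the set $E$ in \eqref{null-set} empty so that Lemma \ref{E-Lem-Single} (whose right-hand sides have the claimed sign because $\alpha(z)\cdot z\geq 0$, $H'_{\lambda}\geq 0$, $\zeta \geq 0$) upgrades every weak solution to an entropy solution, and then apply the comparison principle \eqref{comperi} with identical data to get $\xi_1=\xi_2$. Your closing step --- deducing $\mathcal{A}u_1=\mathcal{A}u_2$ from the equation and then $u_1=u_2$ via \eqref{A-a03} and Poincar\'{e}'s inequality --- is exactly the completion the paper leaves implicit (and is needed, since a single-valued $\beta$ need not be injective); the only cosmetic slip is that $u_i=b(v_i)$, $\xi_i=g(v_i)$ hold for any maximal monotone $\beta$, not just single-valued ones.
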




\end{document}